\newtheorem{theorem}{Theorem}[section]
\newtheorem{lemma}[theorem]{Lemma}
\newtheorem{sublemma}[theorem]{Sublemma}
\newtheorem{cor}[theorem]{Corollary}
\newtheorem{proposition}[theorem]{Proposition}
\theoremstyle{definition}
\newtheorem{definition}[theorem]{Definition}
\newtheorem{example}[theorem]{Example}
\theoremstyle{remark}
\newtheorem{remark}[theorem]{Remark}
\numberwithin{equation}{section}
\newcommand{\Q}{\mathbb{Q}}
\newcommand{\C}{\mathbb{C}}
\newcommand{\R}{\mathbb{R}}
\DeclareMathOperator{\codim}{codim}
\DeclareMathOperator{\Aut}{Aut}
\DeclareMathOperator{\graph}{graph}
\DeclareMathOperator{\Iso}{Iso}
\title{Torus manifolds and non-negative curvature}
\author{Michael Wiemeler}
\address{Institut f\"ur Mathematik, Universit\"at Augsburg\\ D-86135 Augsburg\\ Germany}
\email{michael.wiemeler@math.uni-augsburg.de}
\thanks{}
\subjclass[2010]{57S15, 53C20}
\keywords{torus manifolds, non-negative curvature, rationally elliptic manifolds}
\begin{document}
\begin{abstract}
  A torus manifold \(M\) is a \(2n\)-dimensional orientable manifold with an effective action of an \(n\)-dimensional torus such that \(M^T\neq \emptyset\).
  In this paper we discuss the classification of torus manifolds which admit an invariant metric of non-negative curvature.
  If \(M\) is a simply connected torus manifold which admits such a metric, then \(M\) is diffeomorphic to a quotient of a free linear torus action on a product of spheres.
  We also classify rationally elliptic torus manifolds \(M\) with \(H^{\text{odd}}(M;\mathbb{Z})=0\) up homeomorphism.
\end{abstract}

\maketitle

%-----------------------------------------------------------------------
% End of amsart.template
%-----------------------------------------------------------------------

\section{Introduction}
\label{sec:intro}

The study of non-negatively curved manifolds has a long history in geometry.
In this note we discuss the classification of these manifolds in the context of torus manifolds.
A torus manifold \(M\) is a \(2n\)-dimensional closed orientable manifold with an effective action of an \(n\)-dimensional torus \(T\) such that \(M^T\neq \emptyset\).

Recently Spindeler \cite{spindeler} proved the Bott-conjecture for simply connected torus manifolds.
This conjecture implies that a non-negatively curved manifold is rationally elliptic.
Our first main result deals with rationally elliptic torus manifolds:

\begin{theorem}[{Theorem \ref{sec:torus-manifolds-with}}]
\label{sec:introduction-1}
  Let \(M\) be a simply connected rationally elliptic torus manifold
  with \(H^{\text{odd}}(M;\mathbb{Z})=0\). 
    Then \(M\)  is homeomorphic to a quotient of a free linear torus action on a product of spheres.
\end{theorem}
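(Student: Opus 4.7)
The plan is to combine a structural result for torus manifolds with vanishing odd cohomology, a rational-homotopy obstruction coming from positive ellipticity, and a classical combinatorial characterization of complete-intersection face rings.

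First I would invoke the work of Masuda--Panov: under the hypothesis $H^{\text{odd}}(M;\mathbb{Z})=0$, the orbit space $Q:=M/T$ is a nice manifold with corners whose face poset is that of a simple convex polytope, $M$ is determined up to equivariant homeomorphism by $Q$ together with a characteristic function $\lambda\colon\{\text{facets of }Q\}\to\mathbb{Z}^n$, and the rational cohomology admits a face-ring presentation
\[
H^*(M;\mathbb{Q})\cong\mathbb{Q}[v_1,\ldots,v_m]/(I_{SR}(K)+J_\lambda),
\]
where $K=\partial Q^*$ is the simplicial $(n-1)$-sphere dual to $Q$ and $J_\lambda$ is a linear ideal of rank $n$.

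Next I would exploit rational ellipticity via the Friedlander--Halperin theorem: a simply connected rationally elliptic space with vanishing odd rational cohomology has rational cohomology of complete-intersection type, $H^*(M;\mathbb{Q})\cong\mathbb{Q}[x_1,\ldots,x_r]/(f_1,\ldots,f_r)$ with all $x_i$ and $f_j$ of positive even degree. Comparing this with the face-ring presentation, and using that the Stanley--Reisner ring of a simplicial sphere is Cohen--Macaulay so that $J_\lambda$ is a regular sequence on $\mathbb{Q}[v]/I_{SR}(K)$, one concludes that $\mathbb{Q}[v]/I_{SR}(K)$ itself must be a complete intersection of codimension $m-n$. The classical theorem (Stanley/Reisner) that a simplicial complex has complete-intersection face ring if and only if it is a join of boundaries of simplices, together with the fact that $K$ is a sphere, then yields $K\cong\partial\Delta^{n_1}*\cdots*\partial\Delta^{n_k}$ with $\sum n_i=n$, and dually $Q\cong\prod_{i=1}^k\Delta^{n_i}$.

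Finally I would realise $M$ explicitly as a quotient of a product of odd spheres. The moment-angle manifold over $K=\partial\Delta^{n_1}*\cdots*\partial\Delta^{n_k}$ is $\mathcal{Z}_K\cong\prod_i S^{2n_i+1}$, carrying the standard linear action of $T^{\sum(n_i+1)}$ with orbit space $Q$. By the reconstruction theorem $M$ is equivariantly homeomorphic to $\mathcal{Z}_K/K_\lambda$, where $K_\lambda$ is the kernel of the surjection $T^{\sum(n_i+1)}\twoheadrightarrow T^n$ dual to $\lambda$; simple connectedness of $M$ forces $\lambda$ to be surjective onto $\mathbb{Z}^n$ and $K_\lambda$ to be a subtorus acting freely, giving the desired presentation.

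I expect the main obstacle to be step two: cleanly extracting from positive ellipticity that $I_{SR}(K)$ alone (not merely $I_{SR}(K)+J_\lambda$) is a complete intersection, and then translating this commutative-algebra statement into the combinatorial rigidity of $Q$ as a product of simplices. This is the step where the global geometric hypothesis of rational ellipticity becomes a local combinatorial conclusion about the orbit space.
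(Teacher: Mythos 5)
There is a genuine gap, and it occurs already in your first step. Masuda--Panov do \emph{not} show that the orbit space of a torus manifold with $H^{\text{odd}}(M;\mathbb{Z})=0$ has the face poset of a simple convex polytope; they show only that $Q=M/T$ is a nice manifold with corners all of whose faces are \emph{acyclic}, so the dual object is a simplicial \emph{poset} rather than a simplicial complex, and the cohomology is presented by the face ring of that poset rather than by a Stanley--Reisner ring. The simplest counterexample to your framework is $M=S^{2n}$ with the linear $T^n$-action: it satisfies all hypotheses of the theorem, but its orbit space $\Sigma^n$ has only $n$ facets and is not a polytope (for $n=2$ it is a bigon). Consequently your conclusion $Q\cong\prod_i\Delta^{n_i}$, and the resulting realization of $M$ as a quotient of a product of \emph{odd} spheres only, is false in general: the correct answer must allow even-dimensional sphere factors, corresponding to orbit spaces $\prod_{i<r}\Sigma^{n_i}\times\prod_{i\geq r}\Delta^{n_i}$. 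Your argument, restricted to the quasitoric case where $Q$ really is a polytope, essentially reproduces Gurvich's thesis result cited in the introduction; the substance of the theorem lies precisely in the non-polytopal case your commutative-algebra step cannot see. (The paper instead extracts from ellipticity only the bound that every two-dimensional face of $Q$ has at most four vertices, and then runs a purely combinatorial induction on the face poset that accommodates both $\Sigma$- and $\Delta$-factors.)

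A second, smaller gap: even where $Q$ is combinatorially a product of simplices, you pass from an isomorphism of face posets to a homeomorphism of $M$ with the standard model. Since the faces of $Q$ are only acyclic --- they may be non-simply-connected homology discs --- $Q$ need not be face-preserving homeomorphic to the standard product, and the canonical model $M_Q(\lambda)$ is taken over the actual $Q$. Bridging this requires an argument like the paper's Theorem \ref{sec:simpl-torus-acti-3}, which replaces faces by contractible ones via surgery on the orbit space before invoking the reconstruction lemma; this step cannot be omitted.
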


Our second main theorem is as follows.

\begin{theorem}[{Theorem \ref{sec:proof-theor-refs-1}}]
\label{sec:introduction}
  Let \(M\) be a simply connected non-negatively curved torus manifold.
  Then \(M\) is equivariantly diffeomorphic to a quotient of a free linear torus action on a product of spheres.
\end{theorem}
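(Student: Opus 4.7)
The strategy is to deduce the theorem from Theorem~\ref{sec:introduction-1} by verifying its hypotheses in the non-negatively curved setting, and then upgrading the resulting homeomorphism to a smooth equivariant one. First, Spindeler's resolution of the Bott conjecture for simply connected torus manifolds implies that \(M\) is rationally elliptic. To apply Theorem~\ref{sec:introduction-1} I must also show \(H^{\text{odd}}(M;\mathbb{Z})=0\); in the non-negatively curved setting one can use that the characteristic submanifolds (the fixed components of codimension-one circle subgroups) are totally geodesic and therefore themselves closed non-negatively curved torus manifolds. Combining this with induction on dimension and a perfect \(T\)-invariant Morse--Bott function whose critical set is \(M^T\) (built, for instance, from the distance to a component of \(M^T\) or from a generic circle subaction), one concludes that \(H^*(M;\mathbb{Z})\) is torsion-free and concentrated in even degrees, as every critical orbit-component of such a Morse--Bott function has even index.

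With the hypotheses in place, Theorem~\ref{sec:introduction-1} yields a homeomorphism \(\varphi\colon N/H \to M\), where \(N\) is a product of spheres and \(H\) is a subtorus acting freely and linearly on \(N\). To upgrade this to an equivariant diffeomorphism I would compare the orbit spaces \(M/T\) and \((N/H)/T\) as nice manifolds with corners equipped with their characteristic functions: the facets correspond to characteristic submanifolds of \(M\), labelled by the primitive elements of \(\Hom(S^1,T)\) specifying the isotropy circles. The plan is to match this combinatorial data --- which is detected by \(H^2\) and by the local slice representations near \(M^T\) --- and then reconstruct \(M\) equivariantly from the collection of characteristic submanifolds and their equivariant normal bundles via an equivariant collar/gluing argument analogous to a torus-equivariant handle decomposition, exploiting non-negative curvature (and hence total geodesy of the relevant invariant submanifolds) to keep the pieces compatible with the smooth structure.

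The main obstacle is this final step: passing from the topological classification of Theorem~\ref{sec:introduction-1} to a smooth equivariant classification, since in general the equivariant diffeomorphism type of a torus manifold is strictly finer than its homeomorphism type. I expect the heart of the argument to be a rigidity/reconstruction statement: a simply connected torus manifold with \(H^{\text{odd}}(M;\mathbb{Z})=0\) whose underlying manifold is diffeomorphic to a quotient \(N/H\) of the above form is, up to a re-choice of the product \(N\) of spheres and of the free linear subtorus \(H\), equivariantly diffeomorphic to such a quotient. Establishing this --- likely by an inductive argument cutting \(M\) along a characteristic submanifold and matching the resulting equivariant disc-bundle data with its model --- is the technical core I would expect to be the hardest part of the proof.
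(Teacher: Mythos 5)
Your high-level outline — establish $H^{\text{odd}}(M;\mathbb Z)=0$, apply the combinatorial classification of the orbit space, then upgrade to a smooth equivariant statement — is the right skeleton and matches what the paper does. But the two concrete steps you propose for the hard parts do not hold up, and the actual mechanism in the paper is different in both places.

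First, the Morse--Bott route to $H^{\text{odd}}(M;\mathbb Z)=0$ is not substantiated. For a torus manifold $M^T$ is a finite set, and while one can choose a circle $S^1\subset T$ with $M^{S^1}=M^T$, there is in general no canonical perfect $T$-invariant Morse function whose critical set is $M^T$ — this is a Hamiltonian/symplectic phenomenon, not something available for an arbitrary invariant metric. In the paper the vanishing of odd cohomology is \emph{not} established independently: it is a corollary of the much stronger Lemma~\ref{sec:proof-theor-refs-2}, which shows that $M/T$ is locally standard and that $M/T$ and all of its faces are diffeomorphic, after smoothing corners, to standard discs. Once the faces are acyclic, $H^{\text{odd}}(M;\mathbb Z)=0$ follows from Masuda--Panov. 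The geometric input for that lemma is Spindeler's disc-bundle decomposition $M = D(N)\cup_E D(F)$ for a characteristic submanifold $F$, together with a delicate inductive case analysis (whether $\pi_N(\pi_F^{-1}(x))$ is $0$- or $1$-dimensional, the claim that $\lambda(F)$ acts freely on $N$ in the second case, etc.). You correctly guessed that total geodesy of invariant submanifolds and induction on dimension play a role, but the target of the induction is the smooth structure on the orbit space, not just the cohomology.

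Second, and more importantly, the passage from the homeomorphism classification to the equivariant diffeomorphism classification is where your proposal is vaguest, and a generic "equivariant collar/gluing" or "torus-equivariant handle decomposition" is not what makes it work. The paper's argument rests on two specific results: (i) Davis's rigidity theorem for nice manifolds with corners whose faces are all standard discs (Lemma~\ref{sec:non-negat-curv}), which shows the smooth manifold-with-corners structure of $M/T$ is determined by its face poset — this is exactly why Lemma~\ref{sec:proof-theor-refs-2}, giving standard discs rather than merely contractible faces, is essential; and (ii) a shellability notion for nice manifolds with corners and the fact that $\prod_i\Sigma^{n_i}\times\prod_i\Delta^{n_i}$ is shellable, which via Lemma~\ref{sec:proof-theor-refs-3} (a generalization of Theorem 5.6 of \cite{MR3030690}) lets one reconstruct a locally standard torus manifold over a shellable base, up to equivariant diffeomorphism, from its characteristic function alone. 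Without identifying a rigidity statement of this form, your "reconstruction" step has no teeth: simply knowing the characteristic submanifolds and their equivariant normal bundles is not a priori enough to fix the equivariant smooth structure, and in general the equivariant diffeomorphism type of a torus manifold is indeed finer than its homeomorphism type. The shelling and the disc-face rigidity are precisely what close this gap, and they are the missing ideas in your proposal.
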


In the situation of the theorem the torus action on the quotient \(N/T'\) of a product of spheres \(N=\prod_{i<r} S^{2n_i}\times \prod_{i\geq r} S^{2n_i+1}\)  by a free linear action  of a torus \(T'\) is defined as follows.
Let \(T\) be a maximal torus of \(\prod_{i<r} SO(2n_i+1)\times \prod_{i\geq r} SO(2n_i+2)\).
Then there is a natural linear action of \(T\) on \(N\).
Moreover, \(T'\) can be identified with a subtorus of \(T\).
Therefore \(T/T'\) acts on \(N/T'\).
If the dimension of \(T'\) is equal to the number of odd-dimensional factors in the product \(N\), then \(N/T'\) together with the action of \(T/T'\) is a torus manifold.

We also show that the fundamental group of a non-simply connected
torus manifold of dimension \(2n\) with an invariant metric of
non-negative curvature is isomorphic to \(\mathbb{Z}^{k}_2\) with
\(k\leq n-1\). In particular, every such manifold is finitely covered by a manifold as in
Theorem~\ref{sec:introduction}.

Theorems \ref{sec:introduction-1} and \ref{sec:introduction} are already known in dimension four.
As is well known a simply connected rationally elliptic four-manifold is homeomorphic to \(S^4\), \(\C P^2\), \(S^2\times S^2\), \(\C P^2\# \C P^2\) or \(\C P^2\# \overline{\C P^2}\).
Furthermore, a simply connected non-negatively curved four-dimensional torus manifold is diffeomorphic to one of the manifolds in the above list (see \cite{MR2784821}, \cite{MR1272983}, \cite{kleiner90:_rieman}, \cite{grove13}).
Moreover, by \cite{MR1104531} or \cite{galaz-garciaar:_cohom}, the \(T^2\)-actions on these spaces are always equivalent to a torus action as described above.

We should note here that it has been shown by Grove and Searle \cite{MR1255926} that a simply connected torus manifold which admits an invariant metric of positive curvature is diffeomorphic to a sphere or a complex projective space.
Moreover, it has been shown by Gurvich in his thesis \cite{gurvich08:_some_resul_topol_of_quasit} that the orbit space of a rationally elliptic quasitoric manifold is face-preserving homeomorphic to a product of simplices.
 This condition on the orbit space is satisfied if and only if the quasitoric manifold is a quotient of a free torus action on a product of odd-dimensional spheres. 

In dimension six Theorem~\ref{sec:introduction-1} follows from these
results of Gurvich, the classification of simply connected six
dimensional torus manifolds with \(H^3(M;\mathbb{Z})=0\) given by
Kuroki \cite{kuroki13:_two} and a characterization of the cohomology rings of simply
connected six-dimensional rationally elliptic manifolds given by
Herrmann \cite{herrmann14:_ration_ellip_kruem_kohom}.

In \cite{MR1255926} \((2n+1)\)-dimensional positively curved manifolds
with isometric actions of an \((n+1)\)-dimensional manifolds were also
classified.
We expect that a result similar to Theorem~\ref{sec:introduction}
holds for isometric actions of \(n+1)\)-dimensional tori on
non-negatively curved \(2n+1\)-dimensional manifolds with
one-dimensional orbits.
We will discuss the details of this in a subsequent paper.

We apply Theorem \ref{sec:introduction-1} to rigidity problems in toric topology.
As a consequence we get the following theorem:

\begin{theorem}[Corollary \ref{sec:appl-rigid-probl-1}]
\label{sec:introduction-2}
  Let \(M\) be a simply connected torus manifold with \(H^*(M;\mathbb{Z})\cong H^*(\prod_i \C P^{n_i};\mathbb{Z})\).
  Then \(M\) is homeomorphic to \(\prod_i \C P^{n_i}\).
\end{theorem}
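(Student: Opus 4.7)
The plan is to derive Theorem~\ref{sec:introduction-2} from Theorem~\ref{sec:introduction-1} in two steps: first show that \(M\) is rationally elliptic, so that Theorem~\ref{sec:introduction-1} applies, then use a cohomological rigidity argument to identify the resulting quotient of a product of spheres with \(\prod_i \C P^{n_i}\).

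For the first step, note that by hypothesis \(H^{\text{odd}}(M;\mathbb{Z})=0\). For a simply connected torus manifold this implies that \(M\) is formal (via the equivariantly formal structure of \(H_T^*(M;\Q)\) and its description as the face ring of the orbit space). Consequently the rational homotopy type of \(M\) coincides with that of \(\prod_i \C P^{n_i}\), which is rationally elliptic. Hence \(M\) is rationally elliptic, and Theorem~\ref{sec:introduction-1} yields a homeomorphism \(M\cong N/T'\) for some product of spheres \(N=\prod_{i<r}S^{2m_i}\times\prod_{i\geq r}S^{2m_i+1}\) carrying a free linear action of a torus \(T'\).

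For the second step, I would compute \(H^*(N/T';\mathbb{Z})\) from the Borel fibration \(T'\to N\to N/T'\) (or equivalently from the face ring of the orbit polytope \(N/T\)) and match it against \(\bigotimes_i \mathbb{Z}[x_i]/(x_i^{n_i+1})\). The rigidity of the target ring, a tensor product of truncated polynomial algebras on generators concentrated in degree~\(2\), should force the combinatorial data of \((N,T')\), namely the sphere dimensions of \(N\) and the characteristic function of the \(T'\)-action, to coincide, up to the identification \(\C P^1=S^2\), with the standard presentation of \(\prod_i \C P^{n_i}\) as \(\prod_i S^{2n_i+1}/T^r\); one then concludes that the two quotients are homeomorphic.

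The main obstacle lies in this last step: upgrading a cohomology ring match to a topological homeomorphism. This is a cohomological rigidity statement for the special class of torus manifolds produced by Theorem~\ref{sec:introduction-1}, namely free torus quotients of products of spheres. In the quasitoric subcase (no even-dimensional sphere factors in \(N\)) the required rigidity is essentially already available from Gurvich's work \cite{gurvich08:_some_resul_topol_of_quasit}, since his orbit-space description identifies the quotient up to homeomorphism from the characteristic data; the residual task is to verify that nothing new occurs when \(N\) has even-dimensional sphere factors, which, thanks to the coincidence \(\C P^1=S^2\), should reduce to the quasitoric case after replacing those factors by \(S^3\) quotients by a trivial circle.
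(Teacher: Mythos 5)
Your overall strategy matches the paper's: deduce rational ellipticity, apply the classification to write \(M\) as a free torus quotient of a product of spheres, and then identify that quotient with \(\prod_i\C P^{n_i}\). (The paper pins down the orbit poset as \(\mathcal{P}(\prod_i\Delta^{n_i})\) not by a ring computation but by comparing rational homotopy groups of the two sphere quotients, and it gets rational ellipticity from the intrinsic formality of the cohomology ring of such a quotient rather than from a formality claim for \(M\) itself — your justification of formality via equivariant formality is not a complete argument, though the conclusion is reachable.)

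The genuine gap is in your last step, and it is exactly the step you flag as the "main obstacle": you never actually close it. Knowing that \(M\) is (homeomorphic to) a quasitoric manifold over \(\prod_i\Delta^{n_i}\) with \(H^*(M;\mathbb{Z})\cong\bigotimes_i\mathbb{Z}[x_i]/(x_i^{n_i+1})\) does not by itself force the characteristic function to be the standard one: quasitoric manifolds over a product of simplices are the generalized Bott manifolds, a large family with many non-standard characteristic matrices, and deciding which of them are distinguished by their cohomology rings is precisely the (nontrivial) cohomological rigidity problem. Gurvich's result does not help here — it says that the orbit space of a rationally elliptic quasitoric manifold is a product of simplices, i.e.\ it constrains the polytope, not the characteristic data, and the statement that "the orbit-space description identifies the quotient up to homeomorphism from the characteristic data" is the canonical-model fact (the paper's Theorem \ref{sec:simpl-torus-acti-3}), which presupposes that you already know the characteristic data agree. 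What is needed, and what the paper invokes, is the theorem of Choi--Masuda--Suh (Corollary 1.3 of \cite{MR2978415}) that a quasitoric manifold over \(\prod_i\Delta^{n_i}\) whose cohomology ring is isomorphic to that of \(\prod_i\C P^{n_i}\) is homeomorphic to \(\prod_i\C P^{n_i}\). Without that input (or an argument replacing it), your proof is incomplete.
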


This theorem is a stronger version of a result of Petrie \cite{MR0322893} related to his conjecture on circle actions on homotopy complex projective spaces.
This conjecture states that if \(f:M\rightarrow \C P^n\) is a homotopy equivalence and \(S^1\) acts non-trivially on the manifold \(M\), then \(f^*(p(\C P^n))=p(M)\), where \(p(M)\) denotes the total Pontrjagin class of \(M\).
Petrie showed that his conjecture holds if there is an action of an \(n\)-dimensional torus on \(M\).
Moreover, the conjecture has been shown by Dessai and Wilking \cite{MR2114425} for the case that there is an effective action of a torus of dimension greater than \(\frac{n+1}{4}\) on \(M\).
For more results related to this conjecture see the references in \cite{MR2114425}.

The proof of Theorem~\ref{sec:introduction-1} consists of two
steps.
In a first step we show that the homeomorphism type of a simply
connected torus
manifold \(M\), whose cohomology with integer coefficients vanishes in
odd degrees, depends only on
the isomorphism type of the face poset of \(M/T\) and the
characteristic function of \(M\).
Then all possible face posets of \(M/T\) under the condition that \(M\) is
rationally elliptic  are determined.
By the first step all such manifolds are homeomorphic to quotients of
the moment angle complex associated to these posets.
As it turns out these moment angle complexes are products of spheres.

The proof of Theorem~\ref{sec:introduction} is based on the
computations of the face posets from above.
With Spindeler's results from
\cite{spindeler} one can see that all faces of \(M/T\) are
diffeomorphic after smoothing the corners to standard discs.
When this is established, generalizations of results from \cite{MR3030690}
imply the theorem.

This paper is organized as follows.
In Section \ref{sec:pre} we discuss results of Masuda and Panov about torus manifolds with vanishing odd degree cohomology.
In Section \ref{sec:simplyfy} we introduce a construction which simplifies the torus action on  a torus manifold.
In Sections \ref{sec:torus-with-non-neg}, \ref{sec:rigidity} and
\ref{sec:towards} we prove Theorems \ref{sec:introduction-1},
\ref{sec:introduction-2} and \ref{sec:introduction}, respectively.
In the last Section~\ref{sec:non-simply-con} we discuss non-simply
connected non-negatively curved torus manifolds.

I would like to thank Wolfgang Spindeler for sharing his results from \cite{spindeler}.
I would also like to thank Fernando Galaz-Garcia, Martin Kerin,
Marco Radeschi and Wilderich Tuschmann for comments on earlier
versions of this paper.
I would also like to thank the anonymous referee for suggestions which
helped to improve the exposition of the article.

\section{Preliminaries}
\label{sec:pre}

Before we prove our results, we review some results of Masuda and Panov \cite{MR2283418} about torus manifolds with vanishing odd degree cohomology.

They have shown that a smooth torus manifold \(M\) with \(H^{\text{odd}}(M;\mathbb{Z})=0\) is locally standard.
This means that each point in \(M\) has an invariant neighborhood which is weakly equivariantly homeomorphic to an open invariant subset of the standard \(T^n\)-representation on \(\C^n\).
Moreover, the orbit space is a nice manifold with corners such that all faces of \(M/T\) are acyclic \cite[Theorem 9.3]{MR2283418}.
Here a manifold with corners is called nice if each of its codimension-\(k\) faces is contained  in exactly \(k\) codimension-one faces.
A codimension-one face of \(M/T\) is also called a facet of \(M/T\).
Moreover, following Masuda and Panov, we count \(M/T\) itself as a codimension-zero face of \(M/T\).

 The faces of \(M/T\) do not have to be contractible.
 But we show in Section~\ref{sec:simplyfy} that the action on \(M\) can be changed in such a way that all faces become contractible without changing the face-poset of \(M/T\).
 This new action might be non-smooth (see Remark \ref{sec:simpl-torus-acti-1}).
 But it always admits a canonical model  over a topological nice manifold with corners as described below.

For a facet \(F\) of \(Q=M/T\) denote by \(\lambda(F)\) the isotropy group of a generic point in \(\pi^{-1}(F)\), where \(\pi:M\rightarrow M/T\) is the orbit map.
Then \(\lambda(F)\) is a circle subgroup of \(T\).
Let
\begin{equation*}
  M_Q(\lambda)=Q\times T/\sim,
\end{equation*}
where two points \((x_i,t_i)\in Q\times T\), \(i=1,2\), are identified if and only if \(x_1=x_2\) and \(t_1t_2^{-1}\) is contained in the subtorus of \(T\) which is generated by the \(\lambda(F)\) with \(x_1\in F\).
There is a \(T\)-action on \(M_Q(\lambda)\), induced by multiplication on the second factor in \(Q\times T\).
Then, by \cite[Lemma 4.5]{MR2283418}, there is an equivariant homeomorphism
\begin{equation*}
  M_Q(\lambda)\rightarrow M.
\end{equation*}

For every map \(\lambda:\{\text{facets of }Q\}\rightarrow \{\text{one-dimensional subtori of }T\}\) such that \(T\) is isomorphic to \(\lambda(F_1)\times\dots\times \lambda(F_n)\), whenever the intersection of \(F_1\cap\dots\cap F_n\) is non-empty, the model \(M_Q(\lambda)\) is a manifold.

The canonical model is equivariantly homeomorphic to a quotient of a free torus action on the moment angle complex \(Z_Q\)  associated to \(Q\).
Here \(Z_Q\) is defined as follows:
\begin{equation*}
  Z_Q= Q\times T_Q/\sim.
\end{equation*}

Here \(T_Q\) is the torus \(S^1_{1}\times\dots\times S^1_{k}\), where \(k\) is the number of facets of \(Q\).
The equivalence relation \(\sim\) is defined as follows.
Two points \((q_i,t_i)\in Q\times T_{Q}\) are identified if \(q_1=q_2\) and \(t_1t_2^{-1}\in \prod_{i\in S(q_1)} S_i^1\), where \(S(q_1)\) is the set of those facets of \(Q\) which contain \(q_1\).

The torus which acts freely on \(Z_Q\) with quotient \(M_Q(\lambda)\) is given by the kernel of a homomorphism \(\psi:T_Q\rightarrow T\), such that the restriction of \(\psi\) to \(S^1_i\) induces an isomorphism \(S_i^1\rightarrow \lambda(F_i)\). 

\begin{example}
  If \(Q=\Delta^n\) is an \(n\)-dimensional simplex, then \(T_Q\) is an \((n+1)\)-dimensional torus.
  Moreover, \(Z_Q\) is equivariantly homeomorphic to \(S^{2n+1}\subset \C^{n+1}\) with the standard linear torus action.
\end{example}

\begin{example}
   If \(Q=\Sigma^n\) is the orbit space of the standard linear torus action on \(S^{2n}\), then \(T_Q\) is \(n\)-dimensional.
  Moreover, \(Z_Q\) is equivariantly homeomorphic to \(S^{2n}\subset \C^{n}\oplus \R\) with the standard linear torus action.
\end{example}

\begin{example}
  Let \(Q_1\) and \(Q_2\) be two nice manifolds with corners.
  If \(Q=Q_1\times Q_2\), then \(T_Q\cong T_{Q_1}\times T_{Q_2}\) and \(Z_Q\) is equivariantly homeomorphic to \(Z_{Q_1}\times Z_{Q_2}\).
\end{example}

Now assume that \(M\) is a torus manifold with \(H^{\text{odd}}(M;\Q)=0\).
This condition is always satisfied if \(M\) is rationally elliptic because \(\chi(M)=\chi(M^T)>0\).
  Then the torus action on \(M\) might not be locally standard and \(M/T\) might not be a manifold with corners.
  But \(M/T\) still has a  face-structure induced by its stratification by connected orbit types.
  It is defined as in \cite{MR0375357}.
  A \(k\)-dimensional face is a component \(C\) of \(M^{T^{n-k}}/T\) such that the identity component of the isotropy group of a generic point in \(C\) is equal to \(T^{n-k}\), where \(T^{n-k}\) is a subtorus of codimension \(k\) in \(T\).
  The faces of \(M/T\) defined in this way have the following properties:
  \begin{itemize}
  \item It follows from localization in equivariant cohomology that
    the cohomology of every \(M^{T^{n-k}}\) is concentrated in even degrees.
    Therefore every component of \(M^{T^{n-k}}\) contains a \(T\)-fixed point.
    This is equivalent to saying that
 each face of \(M/T\) contains at least one vertex, i.e. a face of dimension zero.
  \item By an investigation of the local weights of the action, one sees that each face of \(M/T\) of codimension \(k\) is contained in exactly \(k\) faces of codimension \(1\).
  \item   The vertex-edge-graph of each face is connected. (see \cite[Proposition 2.5]{MR0375357})
  \end{itemize}

\section{Simplifying torus actions}
\label{sec:simplyfy}

In this section we describe an operation on locally standard torus manifolds \(M\) which simplifies the torus action on \(M\).
For this construction we need the following two lemmas.

\begin{lemma}
\label{sec:locally-stand-torus}
  Let \(M\) be a topological \(n\)-manifold with \(H^*(M;\mathbb{Z})\cong H^*(S^n;\mathbb{Z})\). 
  Then there is a contractible compact \((n+1)\)-manifold \(X\) such that \(\partial X=M\).
  Moreover, \(X\) is unique up to homeomorphism relative \(M\).
  In particular, every homeomorphism of \(M\) extends to a homeomorphism of \(X\).
\end{lemma}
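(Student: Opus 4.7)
The plan is to establish existence and uniqueness of the filling $X$ separately; the extension-of-homeomorphisms statement will then follow formally from uniqueness. For existence I would split by dimension. When $n\leq 2$ the classification of low-dimensional topological manifolds forces $M\cong S^n$, and $X=D^{n+1}$ suffices. When $n=3$ I would cite Freedman's theorem that every integer homology $3$-sphere bounds a contractible compact topological $4$-manifold. When $n\geq 4$ I would build $X$ by surgery on $M\times[0,1]$: the vanishing of $H_1(M;\mathbb{Z})$ forces $\pi_1(M)$ to be perfect, so attaching $2$-handles along a generating system of commutators kills $\pi_1$, after which handles of higher index kill the remaining relative homotopy; the result is a simply connected integer homology $(n+1)$-disc with other boundary component $S^n$, which is capped off with a standard disc.

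For uniqueness I would take two contractible fillings $X_1, X_2$ and form the closed manifold $Y=X_1\cup_M X_2$. Mayer--Vietoris and van Kampen combine to show $Y$ is a simply connected integer homology $(n+1)$-sphere, so by the topological Poincar\'e conjecture (now known in every dimension) $Y\cong S^{n+1}$. Next, remove an open ball from the interior of each $X_i$; the result $W_i$ is a simply connected h-cobordism from $M$ to $S^n$. The topological h-cobordism theorem, valid for simply connected total space in every relevant dimension thanks to work of Smale and Freedman, then gives a homeomorphism $W_1\to W_2$ rel $M$; the Alexander trick fills in the excised balls to produce the desired homeomorphism $X_1\cong X_2$ rel $M$. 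The extension statement then follows by a standard trick: given a self-homeomorphism $\phi\colon M\to M$, view $X$ as a contractible filling of $M$ with boundary identification twisted by $\phi$ and apply uniqueness to obtain a self-homeomorphism of $X$ restricting to $\phi$ on the boundary.

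The main obstacle is the case $n=3$, where both the existence step and the $5$-dimensional step of the h-cobordism argument depend on Freedman's topological surgery machinery. Fortunately, in both places the total spaces involved ($X$ itself, and the h-cobordisms $W_i$) are simply connected, so Freedman's theorems apply directly in their good-fundamental-group form; this is what allows the statement to hold uniformly in all dimensions.
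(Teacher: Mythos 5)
Your existence argument is essentially Kervaire's surgery construction (kill the perfect group $\pi_1(M)$ by $2$-handles attached to $M\times[0,1]$ along normal generators, repair the resulting $H_2$ by $3$-handles, cap off the resulting homotopy sphere), which is the content of the Freedman--Quinn corollaries that the paper simply cites; one caveat is that the hard case is not only $n=3$ but also $n=4$, since killing the new $H_2$ requires framed locally flat embedded $2$-spheres in a simply connected topological $4$-manifold, i.e.\ Freedman's embedding machinery again. That is a matter of attribution, not correctness.

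The genuine gap is in your uniqueness step. The manifold $W_i=X_i\setminus \mathring B$ is \emph{not} an h-cobordism from $M$ to $S^n$: the inclusion $S^n\hookrightarrow W_i$ is a homotopy equivalence, but $M\hookrightarrow W_i$ is only a homology equivalence (by Lefschetz duality), and since $W_i$ is simply connected it cannot be a homotopy equivalence when $\pi_1(M)\neq 1$. This non-simply-connected case is exactly the one the paper needs (boundaries of faces of $M/T$ can be, e.g., Poincar\'e spheres; cf.\ Remark \ref{sec:simpl-torus-acti-1}), and if your claim held, the h-cobordism theorem would force $M\cong S^n$, which is false. The standard repair: cap off $Y=X_1\cup_M X_2\cong S^{n+1}$ by a disc $D^{n+2}$ and view $D^{n+2}$ as a \emph{relative} h-cobordism from $X_1$ to $X_2$ rel a collar $M\times I\subset S^{n+1}$ (both inclusions $X_i\hookrightarrow D^{n+2}$ are homotopy equivalences of contractible spaces). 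Since $\pi_1(D^{n+2})=1$ and $\operatorname{Wh}(1)=0$, the relative s-cobordism theorem in dimension $n+2\geq 6$ (Freedman's topological version when $n=3$, so $\dim=5$) gives $D^{n+2}\cong X_1\times I$ rel the collar, hence $X_1\cong X_2$ rel $M$. Your final reduction of the extension statement to uniqueness rel $M$ is fine once uniqueness is proved correctly.
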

\begin{proof}
  For \(n\leq 2\), this follows from the classification of manifolds of dimension \(n\).
  If \(n=3\) then this follows from the proof of Corollary 9.3C and Corollary 11.1C of \cite{MR1201584}.
  For \(n\geq 4\) this follows from the proof of \cite[Corollary 11.1]{MR1201584}.
\end{proof}

\begin{lemma}
\label{sec:locally-stand-torus-2}
  Let \(Q_1,Q_2\) be two nice manifolds with corners   of the same dimension such that all faces of \(Q_i\), \(i=1,2\), are contractible.
  If there is an isomorphism of their face-posets \(\phi\colon \mathcal{P}(Q_1)\rightarrow \mathcal{P}(Q_2)\), then there is a face-preserving homeomorphism \(f\colon Q_1\rightarrow Q_2\), such that, for each face \(F\)  of \(Q_1\), \(f(F)=\phi(F)\).
\end{lemma}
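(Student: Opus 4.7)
The plan is to build the homeomorphism $f$ by induction on the dimension of faces, extending it one face at a time, and to use Lemma~\ref{sec:locally-stand-torus} to perform the individual extensions. Since the face poset is isomorphic, it suffices to define $f$ consistently on each face $F$ of $Q_1$ as a homeomorphism to $\phi(F)$, in such a way that the definitions on two faces agree on their common subfaces.

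First I would define $f$ on vertices (dimension-$0$ faces) by $\phi$ itself. For the inductive step, suppose $f$ has been defined on the union $\text{sk}_{j-1}(Q_1)$ of all faces of dimension less than $j$, sending each such face $G$ homeomorphically onto $\phi(G)$. For each $j$-face $F$ of $Q_1$, the proper subfaces of $F$ are already in the domain of $f$, and since $\phi$ is a poset isomorphism, $f$ restricts to a homeomorphism $\partial F \to \partial \phi(F)$ (this is well-defined because whenever two proper subfaces $G,G'$ of $F$ meet, their intersection is a common subface of lower dimension where $f$ was previously defined, so no inconsistency arises). Here I am using that a nice manifold with corners, forgetting its corner structure, is a topological manifold with boundary, whose boundary equals the union of its facets.

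To extend $f$ across $F$, I would first check that $\partial F$ is an integral homology sphere. Since $F$ is a contractible compact topological $j$-manifold with boundary, Lefschetz duality combined with the long exact sequence of the pair $(F,\partial F)$ yields $\tilde H_*(\partial F;\mathbb{Z})\cong H^{j-1-*}(F;\mathbb{Z})$, which vanishes except in degree $j-1$ where it is $\mathbb{Z}$; hence $\partial F$ has the integral (co)homology of $S^{j-1}$. Now Lemma~\ref{sec:locally-stand-torus} applies: contractible compact $j$-manifolds bounded by $\partial F$ are unique up to homeomorphism rel.\ $\partial F$. Viewing $\phi(F)$ as a contractible filling of $\partial F$ via $f|_{\partial F}$, the uniqueness clause of Lemma~\ref{sec:locally-stand-torus} produces a homeomorphism $F\to \phi(F)$ extending $f|_{\partial F}$. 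Performing this extension on each $j$-face yields $f$ on $\text{sk}_j(Q_1)$, and the induction terminates once $j$ equals the common dimension of $Q_1$ and $Q_2$, since $Q_i$ itself is the unique top-dimensional face.

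The step I expect to be delicate is the verification that $\partial F$ really is a homology sphere (and in particular a topological manifold), which is what allows Lemma~\ref{sec:locally-stand-torus} to be invoked; this rests on the hypothesis that every face of $Q_i$ is contractible together with niceness, which guarantees that each $j$-face, viewed intrinsically, is a contractible topological $j$-manifold with boundary. Given that, everything else is a routine inductive gluing, with consistency on face intersections handled automatically by the inductive hypothesis.
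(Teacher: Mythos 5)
Your proof is correct and follows essentially the same route as the paper: induction over the skeleta of the face poset, with each $j$-face extension performed by the uniqueness clause of Lemma~\ref{sec:locally-stand-torus} applied to the contractible fillings $F$ and $\phi(F)$ of the homology sphere $\partial F$. The paper's own proof is just a terser version of this argument; your added verification that $\partial F$ is an integral homology sphere is a detail the paper leaves implicit.
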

\begin{proof}
  We construct \(f\) by induction on the \(n\)-skeleton of \(Q_1\).
  There is no problem to define \(f\) on the \(0\)-skeleton.
  Therefore assume that \(f\) is already defined on the \((n-1)\)-skeleton.
 
  Let \(F\) be a \(n\)-dimensional face of \(Q_1\).
  Then \(f\) restricts to an homeomorphism \(\partial F\rightarrow \partial \phi(F)\).
  Because \(F\) and \(\phi(F)\) are contractible manifolds with boundary \(\partial F\), \(f\) extends to a homeomorphism \(F\rightarrow \phi(F)\).
  This completes the proof.
\end{proof}

Now let \(Q\) be a nice manifold with corners and \(F\) a face of \(Q\) of positive codimension which is a homology disc.
Let \(X\) be a homology disc with \(\partial X = {\partial F}\).
Then \(X\cup_{\partial F}F\) is a homology sphere and therefore bounds a contractible manifold \(Y\).
We equip \(Y\) with a face structure such that the facets of \(Y\) are given by \(F\) and \(X\) and the lower dimensional faces coincide with the faces of \(F\) in \(\partial F=F\cap X\).
With this face-structure \(X\) and \(F\) become nice manifolds with corners.

Let \(k=\dim Q-\dim F\).
Then define
\begin{equation*}
  S_{X,F}=Y\times\Delta^{k-1}\cup_{X\times \Delta^{k-1}} X\times \Delta^{k}.
\end{equation*}
Then \(F'=F\times \Delta^{k-1}\) is a facet of \(S_{X,F}\) and we define
\begin{equation*}
  \alpha_{X,F}(Q)= Q-(F\times \Delta^k)\cup_{F'} S_{X,F}.
\end{equation*}

Then \(\alpha_{X,F}(Q)\) is naturally a nice manifolds with corners.

\begin{lemma}
\label{sec:simpl-torus-acti}
  Let \(Q\), \(F\), \(X\) as above.
  If \(dim F\geq 3\) and all faces of \(Q\) of dimension greater than \(\dim F\) are contractible, then \(Q'=\alpha_{F,X}(\alpha_{X,F}(Q))\) and \(Q\) are face-preserving homeomorphic.
\end{lemma}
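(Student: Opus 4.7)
The key observation is that $\alpha_{X,F}$ and $\alpha_{F,X}$ are designed to be inverse operations at the level of face data: the first swaps the codimension-$k$ face $F$ for the homology disc $X$ (using the contractible cobordism $Y$ to interpolate), and the second undoes this swap. I would structure the verification in three steps.

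\textbf{Step 1 (Local reduction).} Outside a neighborhood of the codim-$k$ face, $Q$ and $Q'$ are identical by construction, and their face posets agree combinatorially with corresponding topological faces matching (in particular, the codim-$k$ face of $Q'$ corresponding to $F$ in $Q$ is again homeomorphic to $F$, because the second $\alpha$-operation reinstates it). Inside the neighborhood $F \times \Delta^k$ of $F$ in $Q$, tracing both applications of $\alpha$ shows that the corresponding local region of $Q'$ is
\[
N' = (Y \cup_X Y) \times \Delta^{k-1} \cup_{F \times \Delta^{k-1}} F \times \Delta^k,
\]
where the gluing uses one of the two copies of $F$ in the boundary $\partial(Y \cup_X Y) = F \cup_{\partial F} F$. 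It thus suffices to exhibit a face-preserving homeomorphism $N' \to F \times \Delta^k$ which is the identity on the outer (free) boundary.

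\textbf{Step 2 (Contractibility of $Y \cup_X Y$).} Mayer--Vietoris (using $Y$ contractible and $X$ acyclic) shows that $Y \cup_X Y$ is acyclic, and Seifert--van Kampen gives $\pi_1(Y \cup_X Y) \cong \pi_1(Y) *_{\pi_1(X)} \pi_1(Y) = 1 *_{\pi_1(X)} 1 = 1$, since both factors are trivial regardless of $\pi_1(X)$. Hence $Y \cup_X Y$ is a contractible manifold whose boundary is the double $D = F \cup_{\partial F} F$, a homology sphere of dimension $\dim F \geq 3$. Lemma~\ref{sec:locally-stand-torus} then gives that $Y \cup_X Y$ is the unique contractible filling of $D$ up to homeomorphism rel $D$, and that every self-homeomorphism of $D$ extends to $Y \cup_X Y$.

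\textbf{Step 3 (Absorption and inductive extension).} I would construct the face-preserving homeomorphism $N' \to F \times \Delta^k$ skeleton-by-skeleton in the spirit of Lemma~\ref{sec:locally-stand-torus-2}. The contractible slab $(Y \cup_X Y) \times \Delta^{k-1}$ is absorbed using the rigidity from Step 2; the higher-dimensional faces of $Q'$ and $Q$ are contractible by hypothesis, so extensions across them are automatic; and the codim-$k$ face $F$ is matched using the uniqueness/extension property from Lemma~\ref{sec:locally-stand-torus} applied to the doubled configuration bounding $D$.

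\textbf{Main obstacle.} The heart of the argument is Step 3: absorbing the extra contractible slab into the product $F \times \Delta^k$ to match the local models. The non-contractibility of $F$ itself (only a homology disc) prevents a direct invocation of Lemma~\ref{sec:locally-stand-torus-2}; one must instead combine the rigidity of contractible fillings from Lemma~\ref{sec:locally-stand-torus} with the product factor $\Delta^{k-1}$. The hypothesis $\dim F \geq 3$ enters precisely here, ensuring that $Y \cup_X Y$ has dimension $\dim F + 1 \geq 4$, which places it in the range where Lemma~\ref{sec:locally-stand-torus} supplies the needed uniqueness and extension.
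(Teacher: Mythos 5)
Your Step 1 reduces the lemma to a false local statement, and this is fatal to the whole approach. Your own Step 2 shows that $Y\cup_X Y$ is contractible; the same Mayer--Vietoris and van Kampen computation applied to
\[
N' = (Y \cup_X Y) \times \Delta^{k-1} \cup_{F \times \Delta^{k-1}} F \times \Delta^k
\]
shows that $N'$ is acyclic with $\pi_1(N')\cong 1 *_{\pi_1(F)} \pi_1(F)=1$, hence contractible. On the other hand $F\times\Delta^k$ deformation retracts onto $F$, which is only a homology disc; in the cases the construction is actually needed (e.g.\ $F$ the Poincar\'e sphere minus a ball) one has $\pi_1(F)\neq 1$, so $F\times\Delta^k$ is not contractible and no homeomorphism $N'\to F\times\Delta^k$ exists at all, let alone one that is the identity on the free boundary. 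The implication in Step 1 is only a sufficient condition, and it is a vacuous one: a face-preserving homeomorphism $Q'\to Q$ cannot be supported in a neighborhood of the codimension-$k$ face. This also explains why the hypothesis that all faces of $Q$ of dimension greater than $\dim F$ are contractible plays almost no role in your argument --- a warning sign, since that hypothesis is the engine of the actual proof.

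The paper's proof is global rather than local. One first observes that $\mathcal{P}(Q')\cong\mathcal{P}(Q)$ and that the $\dim F$-skeleta of $Q$ and $Q'$ are face-preserving homeomorphic (every face of dimension at most $\dim F$ is either untouched or replaced by an identical copy of itself). One then extends this homeomorphism over the skeleta of dimension $>\dim F$ by induction, exactly as in the proof of Lemma~\ref{sec:locally-stand-torus-2}: for each face $G$ of $Q$ of dimension $m>\dim F$ the corresponding face $G'$ of $Q'$ is again acyclic and simply connected (by the Mayer--Vietoris/van Kampen argument above applied inside $G$), so both $G$ and $G'$ are contractible fillings of the homology sphere $\partial G\cong\partial G'$, and Lemma~\ref{sec:locally-stand-torus} lets one extend the already-constructed homeomorphism $\partial G\to\partial G'$ over $G$. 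The extension over a face $G\supset F$ genuinely moves points far away from $F$; the extra contractible slab is absorbed across the whole of each higher-dimensional face containing $F$, not inside $F\times\Delta^k$. To repair your write-up you would need to abandon the rel-boundary localization of Step 1 and replace Step 3 by this skeleton-by-skeleton extension, verifying contractibility of the modified faces $G'$ as indicated.
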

\begin{proof}
  It is clear from the construction above that \(\mathcal{P}(Q')\) and \(\mathcal{P}(Q)\) are isomorphic.
  Moreover, the \(\dim F\)-skeleta of \(Q\) and \(Q'\) are face-preserving homeomorphic.
  Therefore, by using Lemma~\ref{sec:locally-stand-torus}, the statement follows by an induction as in the proof of Lemma~\ref{sec:locally-stand-torus-2}.
\end{proof}

If \(M\) is a locally standard torus manifold over \(Q\), then we can construct a torus manifold \(M'\) with orbit space \(\alpha_{X,F}(Q)\) as follows.
Let \(T^k\) act on \(Y'=\partial(Y\times D^{2k})\) (without the face-structure) by the standard action on the second factor.
Choose an isomorphism \(T\cong T^k\times T^{n-k}\) which maps \(\lambda(F)\) to the first factor \(T^k\).
If \(\dot{F}\) denotes \(F\) with a small collar of its boundary removed, then a small neighborhood of \(\pi^{-1}(\dot{F})\) is equivariantly homeomorphic to 
\(\dot{F}\times T^{n-k}\times D^{2k}\).

We define
\begin{equation*}
  \beta_{X,F}(M)=M-(\pi^{-1}(\dot{F})\times D^{2k})\cup_{\partial (\dot{F}\times D^{2k} \times T^{n-k})} (Y'-\dot{F}\times D^{2k})\times T^{n-k}.
\end{equation*}

Since \(Y'\) is the simply connected boundary of a contractible manifold, it is homeomorphic to a sphere.
Moreover, if \(F\) is contractible, then \(\dot{F}\times D^{2k}\) is contractible with simply connected boundary.
Hence, it follows from Schoenflies' Theorem that \(\dot{F}\times D^{2k}\) is a disc and that we may assume that \(F\times D^{2k}\) is embedded in \(Y'\) as the upper hemisphere.
Therefore it follows that \(\beta_{X,F}(M)\) is homeomorphic to \(M\) if \(F\) is contractible.

\begin{theorem}
\label{sec:simpl-torus-acti-3}
  Let \(M\) be a simply connected torus manifold with \(H^{\text{odd}}(M;\mathbb{Z})=0\).
  Then \(M\) is determined by \((\mathcal{P}(M/T),\lambda)\) up to homeomorphism.
\end{theorem}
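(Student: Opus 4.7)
The plan is to invoke the Masuda--Panov canonical model to identify \(M\) equivariantly with \(M_Q(\lambda)\), and then employ the operations \(\alpha_{X,F}\) and \(\beta_{X,F}\) introduced above in order to reduce to the situation in which all faces of \(Q=M/T\) are contractible; at that point Lemma~\ref{sec:locally-stand-torus-2} determines \(Q\) from \(\mathcal{P}(Q)\) alone, and the canonical model then determines \(M\) from \((\mathcal{P}(Q),\lambda)\).

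First, since \(H^{\text{odd}}(M;\mathbb{Z})=0\), the results of Masuda and Panov recalled in Section~\ref{sec:pre} give that the \(T\)-action is locally standard, that \(Q\) is a nice manifold with corners with all faces acyclic, and that there is an equivariant homeomorphism \(M\cong M_Q(\lambda)\). So if \(M_1,M_2\) are two simply connected torus manifolds with vanishing odd cohomology and with \((\mathcal{P}(Q_1),\lambda_1)\cong(\mathcal{P}(Q_2),\lambda_2)\), it suffices to produce a face-preserving homeomorphism \(Q_1\to Q_2\) intertwining the characteristic functions, possibly after first modifying each \(M_i\) by operations that preserve its homeomorphism type.

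Next, induct on the dimension of the non-contractible faces. Every acyclic face of dimension at most two is already a disk, so the base case is clear. Assume that all faces of \(Q\) of dimension \(<d\) are contractible and let \(F\) be a face of dimension \(d\geq 3\) which is only acyclic. By the inductive hypothesis the facets of \(F\) are contractible, so \(\partial F\) is an integral homology sphere; Lemma~\ref{sec:locally-stand-torus} furnishes a contractible \(X\) with \(\partial X=\partial F\), unique up to homeomorphism rel \(\partial F\). Apply \(\alpha_{X,F}\) to \(Q\) and \(\beta_{X,F}\) to \(M\); the new orbit space has \(F\) replaced by the two contractible faces \(X\) and \(Y\), so the total number of non-contractible faces strictly decreases. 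The homeomorphism \(\beta_{X,F}(M)\cong M\) should then follow from the Schoenflies-type argument sketched before the theorem: because all proper faces of \(F\) are contractible, the tubular neighborhood \(\dot F\times D^{2k}\) has simply connected boundary, and one can still realize \(\dot F\times D^{2k}\) as a hemisphere inside \(Y'\). Iterating, one obtains \(\tilde M\cong M\) over an orbit space \(\tilde Q\) all of whose faces are contractible.

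Finally, applying this procedure separately to \(M_1\) and \(M_2\) yields \(\tilde M_i\cong M_i\) over \(\tilde Q_i\); because the sequence of \(\alpha\)-operations needed depends only on \(\mathcal{P}(Q)\), the face posets of \(\tilde Q_1\) and \(\tilde Q_2\) remain isomorphic and the induced characteristic functions still correspond under this isomorphism. Lemma~\ref{sec:locally-stand-torus-2} produces a face-preserving homeomorphism \(\tilde Q_1\to\tilde Q_2\) compatible with these characteristic data, which lifts to an equivariant homeomorphism of canonical models \(\tilde M_1\cong\tilde M_2\), yielding \(M_1\cong M_2\). The main obstacle is precisely the assertion \(\beta_{X,F}(M)\cong M\) when \(F\) is merely acyclic: one must extend the Schoenflies argument so that it uses only the contractibility of the proper faces of \(F\) together with the simple-connectivity of \(M\), invoking Freedman in dimension four and topological surgery in higher dimensions to pass from acyclic to disk-like inside the relevant tubular neighborhood.
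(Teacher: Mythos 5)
Your overall strategy is the paper's: use the operations \(\alpha_{X,F}\), \(\beta_{X,F}\) to make all faces of \(M/T\) contractible while preserving the homeomorphism type of \(M\), then conclude via Lemma~\ref{sec:locally-stand-torus-2} and the canonical model \(M_Q(\lambda)\). However, you have correctly located, but not closed, the one genuinely hard step, namely that \(\beta_{X,F}(M)\cong M\) when \(F\) is only acyclic. Your proposed justification does not work: the Schoenflies argument needs \(\dot F\times D^{2k}\) to be a topological disc, which requires \(F\) to be \emph{contractible}, not merely acyclic. An acyclic face can have nontrivial (perfect) fundamental group, in which case \(\dot F\times D^{2k}\) is not simply connected, its boundary \(\partial\dot F\times D^{2k}\cup \dot F\times S^{2k-1}\) is not simply connected either (contrary to what you assert --- contractibility of the proper faces of \(F\) does not give \(\pi_1(F)=0\)), and no appeal to Freedman or to topological surgery will turn a non-simply-connected acyclic block into a disc. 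So the step "one can still realize \(\dot F\times D^{2k}\) as a hemisphere inside \(Y'\)" is unjustified exactly in the case that matters.

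The paper closes this gap by a different device that you are missing: it never proves \(\beta_{X,F}(M)\cong M\) directly for acyclic \(F\). Instead it applies the operation \emph{twice}. Since \(X\) \emph{is} contractible, the Schoenflies argument applies to the second operation and gives \(\beta_{F,X}(\beta_{X,F}(M))\cong\beta_{X,F}(M)\); on the other hand, Lemma~\ref{sec:simpl-torus-acti} shows that \(\alpha_{F,X}(\alpha_{X,F}(M/T))\) is face-preserving homeomorphic to \(M/T\), so the double operation returns \(M\) up to homeomorphism, i.e.\ \(\beta_{F,X}(\beta_{X,F}(M))\cong M\). Combining the two identifications yields \(\beta_{X,F}(M)\cong M\) with no Schoenflies input on \(F\) itself. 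Note also that Lemma~\ref{sec:simpl-torus-acti} requires all faces of dimension \emph{greater} than \(\dim F\) to be already contractible, which is why the paper runs the induction downwards from the facets (using that \(M/T\) itself is contractible since \(M\) is simply connected), rather than upwards on the dimension as you propose; your bottom-up induction does not supply the hypothesis needed for that lemma.
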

\begin{proof}
  The first step is to simplify the action on \(M\) in such a way that all faces \(F\) of \(M/T\) become contractible.
  Then the statement will follow from Lemma~\ref{sec:locally-stand-torus-2} and \cite[Lemma 4.5]{MR2283418}.
  
  At first assume that \(\dim M\leq 6\).
  Then because \(\pi_1(M)=0\) all faces of \(M/T\) are contractible.
  Therefore the theorem follows in this case.

  Next assume that \(\dim M\geq 8\).
  Because \(M\) is simply connected, \(M/T\) is contractible.
  We simplify the torus action on \(M\) by using the operations \(\beta_{X,F}\) applied by a downwards induction on the dimension of \(F\) beginning with the codimension one faces.
At first assume that \(\dim F\geq 3\) and that all faces of dimension greater than \(\dim F\) are already contractible.
Let \(X\) be a contractible manifold with boundary \(\partial F\).
Then \(\beta_{F,X}(\beta_{X,F}(M))\) is homeomorphic to \(\beta_{X,F}(M)\).
But by Lemma \ref{sec:simpl-torus-acti}, \(\alpha_{F,X}(\alpha_{X,F}(M/T))\) is face-preserving homeomorphic to \(M/T\).
Therefore it follows that \(\beta_{F,X}(\beta_{X,F}(M))\) is homeomorphic to \(M\).
Hence, \(\beta_{X,F}(M)\) is also homeomorphic to \(M\).

If \(\dim F\leq 2\), it follows from the classification of two- and one-dimensional manifolds that there is nothing to do.
\end{proof}

\begin{remark}
\label{sec:simpl-torus-acti-1}
  Since not every three-dimensional homology sphere bounds a contractible smooth manifold, the torus action on \(\beta_{X,F}(M)\) might be non-smooth.
  Therefore with our methods we cannot prove that the diffeomorphism type of \(M\) is determined by \((\mathcal{P}(M/T),\lambda)\).
\end{remark}

Using results of Masuda and Panov \cite{MR2283418} and the author's methods from \cite{MR2885534} together with Theorem~\ref{sec:simpl-torus-acti-3}, one can prove the following partial generalization of Theorem 2.2 of \cite{MR2885534}.

\begin{theorem}
\label{sec:simpl-torus-acti-2}
  Let \(M\) and \(M'\) be simply connected torus manifolds of dimension \(2n\) with \(H^*(M;\mathbb{Z})\) and \(H^*(M';\mathbb{Z})\) generated in degree \(2\).
Let \(m,m'\) be the numbers of characteristic submanifolds of \(M\) and \(M'\), respectively.
Assume that \(m\leq m'\).
  Furthermore, let \(u_1,\dots,u_m\in H^2(M)\) be the Poincar\'e-duals of the characteristic submanifolds of \(M\) and  \(u_1',\dots,u_{m'}'\in H^2(M')\) the Poincar\'e-duals of the characteristic submanifolds of \(M'\).
  If there is a ring isomorphism \(f:H^*(M)\rightarrow H^*(M')\) and a permutation \(\sigma:\{1,\dots,m'\}\rightarrow \{1,\dots,m'\}\) with \(f(u_i)=\pm u_{\sigma(i)}'\), for \(i=1,\dots,m\), then \(M\) and \(M'\) are homeomorphic.
\end{theorem}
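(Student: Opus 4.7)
The plan is to recover the pair $(\mathcal{P}(M/T),\lambda)$, up to an automorphism of $T$, from the ring $H^*(M;\mathbb{Z})$ together with the distinguished classes $u_1,\dots,u_m$, and then invoke Theorem~\ref{sec:simpl-torus-acti-3}. Since $H^*(M;\mathbb{Z})$ is generated in degree $2$, in particular $H^{\text{odd}}(M;\mathbb{Z})=0$, so by the Masuda and Panov results recalled in Section~\ref{sec:pre} both $M$ and $M'$ are locally standard over nice manifolds with corners with acyclic faces, and one has a Stanley--Reisner-type presentation
\begin{equation*}
  H^*(M;\mathbb{Z})\cong \mathbb{Z}[u_1,\dots,u_m]/(I_{M/T}+J_\lambda),
\end{equation*}
where $I_{M/T}$ is generated by the monomials $u_{i_1}\cdots u_{i_k}$ corresponding to collections of facets with empty intersection and $J_\lambda$ is generated by the $n$ linear forms coming from $\lambda$. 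Comparing dimensions in degree $2$ gives $\dim H^2(M;\mathbb{Z})=m-n$ and $\dim H^2(M';\mathbb{Z})=m'-n$, so the ring isomorphism $f$ forces $m=m'$ and $\sigma$ becomes a genuine permutation of $\{1,\dots,m\}$.

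Next I would read the face poset off the ring: a collection $\{F_{i_1},\dots,F_{i_k}\}$ of facets has non-empty intersection if and only if $u_{i_1}\cdots u_{i_k}\neq 0$, which follows from the Stanley--Reisner presentation combined with Poincar\'e duality for the intersections of characteristic submanifolds. Since the $\pm$ signs in the hypothesis do not affect whether a product vanishes, the correspondence $u_i\mapsto \pm u'_{\sigma(i)}$ induces an isomorphism $\mathcal{P}(M/T)\cong\mathcal{P}(M'/T)$ sending $F_i$ to $F'_{\sigma(i)}$. For the characteristic function, the kernel of the surjection $\mathbb{Z}\langle u_1,\dots,u_m\rangle\to H^2(M;\mathbb{Z})$ is the image of $H^2(BT;\mathbb{Z})\cong\mathbb{Z}^n$, whose basis vectors form the rows of the characteristic matrix $(\lambda_1\mid\cdots\mid\lambda_m)$. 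Hence this kernel determines $\lambda$ up to the $GL_n(\mathbb{Z})$-action on $H^2(BT;\mathbb{Z})$ (equivalently, an automorphism of $T$) together with independent sign changes of the individual $\lambda_i$; the latter are harmless since each $\lambda(F_i)$ is an unoriented circle subtorus.

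After postcomposing with a suitable automorphism of $T$, the pairs $(\mathcal{P}(M/T),\lambda)$ and $(\mathcal{P}(M'/T),\lambda'\circ\sigma)$ therefore agree, so Theorem~\ref{sec:simpl-torus-acti-3} yields $M\cong M'$. The main obstacle will be the bookkeeping in the characteristic-function step: the hypothesis $f(u_i)=\pm u'_{\sigma(i)}$ is rather weak, and one must verify carefully that the sign and permutation ambiguities really are absorbed by $\Aut(T)$ together with the intrinsic unorientedness of the circle subtori. This is routine but slightly tedious linear algebra over $\mathbb{Z}$.
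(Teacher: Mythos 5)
Your proposal is correct and follows essentially the route the paper indicates (it states this theorem without a written proof, citing the Masuda--Panov face-ring presentation, the methods of \cite{MR2885534}, and Theorem~\ref{sec:simpl-torus-acti-3}): recover \(\mathcal{P}(M/T)\) from the non-vanishing of products of the \(u_i\), recover \(\lambda\) up to \(\Aut(T)\) and column signs from the kernel of \(\mathbb{Z}^m\to H^2(M)\), and conclude via Theorem~\ref{sec:simpl-torus-acti-3}. The key facts you implicitly use --- that generation in degree 2 forces \(M/T\) to be a homology polytope (so the poset is the nerve of the facets, and a product of duals of facets meeting at a vertex is \(\pm\) the top generator, whence the non-vanishing criterion) --- are exactly the Masuda--Panov results the paper invokes, so the argument is sound as written.
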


Here a characteristic submanifold of a torus manifold is a codimension-two-submanifold which is fixed by some circle-subgroup of the torus and contains a \(T\)-fixed point. Each characteristic submanifold is the preimage of some facet of \(M/T\) under the orbit map.

In \cite{MR0322893} Petrie proved that if an \(n\)-dimensional torus acts on a homotopy complex projective space \(M\) of real dimension \(2n\), then the Pontrjagin classes of \(M\) are standard.
In fact a much stronger statement holds. 

\begin{cor}
  Let \(M\) be a torus manifold which is homotopy equivalent to \(\C P^n\).
  Then \(M\) is homeomorphic to \(\C P^n\).
\end{cor}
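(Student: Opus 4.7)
The plan is to reduce the statement to Theorem~\ref{sec:simpl-torus-acti-2} with $M'=\C P^n$. Since $M$ is homotopy equivalent to $\C P^n$, it is simply connected, satisfies $H^*(M;\mathbb{Z})\cong\mathbb{Z}[x]/(x^{n+1})$, and in particular has $H^{\mathrm{odd}}(M;\mathbb{Z})=0$ with $H^*(M;\mathbb{Z})$ generated in degree two, so the hypotheses of Theorem~\ref{sec:simpl-torus-acti-2} are in place. By the results of Masuda and Panov recalled in Section~\ref{sec:pre}, the $T$-action on $M$ is locally standard and $M/T$ is a nice manifold with corners with acyclic faces. I would first identify the number $m$ of characteristic submanifolds of $M$: using Masuda--Panov's description of $H^*(M;\mathbb{Q})$ as the Stanley--Reisner face ring of $M/T$ modulo the $n$ linear relations induced by the characteristic function $\lambda$, one has $\operatorname{rank}H^2(M;\mathbb{Z})=m-n$, and comparison with $\operatorname{rank}H^2(\C P^n;\mathbb{Z})=1$ forces $m=n+1$, which agrees with the number of characteristic submanifolds (the coordinate hyperplanes) of $\C P^n$.

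Next I would pin down the Poincar\'e duals $u_i\in H^2(M;\mathbb{Z})$. Since $H^2(M;\mathbb{Z})$ is cyclic on the generator $x$, write $u_i=a_i x$ with $a_i\in\mathbb{Z}$. For a $T$-fixed point $p\in M^T$ corresponding to a vertex $v\in M/T$, the $n$ characteristic submanifolds indexed by the facets of $M/T$ through $v$ meet transversely in $\{p\}$, so the product of their Poincar\'e duals equals $\pm[p]^*=\pm x^n$ in $H^{2n}(M;\mathbb{Z})$. This yields $\prod_{F_j\ni v}a_j=\pm 1$, and hence $a_j=\pm 1$ for every facet $F_j$ through $v$. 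Since every facet of $M/T$ contains at least one vertex (one of the three bulleted properties of the face structure listed in Section~\ref{sec:pre}), this forces $u_i=\pm x$ for all $i$.

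On the model side, the Poincar\'e duals $u'_1,\dots,u'_{n+1}$ of the coordinate hyperplanes of $\C P^n$ are all equal to the positive generator $x'\in H^2(\C P^n;\mathbb{Z})$. Consequently the ring isomorphism $f\colon H^*(M)\to H^*(\C P^n)$ determined by $x\mapsto x'$ satisfies $f(u_i)=\pm x'=\pm u'_{\sigma(i)}$ for any permutation $\sigma$ of $\{1,\dots,n+1\}$, so Theorem~\ref{sec:simpl-torus-acti-2} delivers a homeomorphism $M\cong \C P^n$. The only non-formal input is the transverse-intersection identity $\prod_{F_j\ni v}u_j=\pm[p]^*$ at each vertex; this is the main technical ingredient and I would derive it from local standardness and the usual computation of top products in the ordinary cohomology of a locally standard torus manifold.
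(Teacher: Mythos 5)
Your overall reduction to Theorem~\ref{sec:simpl-torus-acti-2} with \(M'=\C P^n\) is exactly what the paper does, and the verification of the hypotheses (simple connectivity, \(H^{\mathrm{odd}}=0\), generation in degree two, the count \(m=n+1\) from \(\operatorname{rank}H^2=m-n\)) is fine. The difference is in how you argue that each Poincar\'e dual \(u_i\) is \(\pm x\): the paper cites Masuda--Panov (Corollary~7.8 of~\cite{MR2283418}), which gives the face-ring presentation and from it the fact that the \(u_i\) are generators of \(H^2(M;\mathbb{Z})\); you instead try to derive this from a transverse-intersection computation at a fixed point.

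That step has a genuine gap. You assert that the \(n\) characteristic submanifolds indexed by the facets through a vertex \(v\) ``meet transversely in \(\{p\}\)''. They do meet transversely at \(p\), but the full intersection \(M_1\cap\cdots\cap M_n=\pi^{-1}(F_1\cap\cdots\cap F_n)\) need not be a single point: for a torus manifold (as opposed to a quasitoric manifold over a simple polytope) the codimension-\(n\) ``face'' \(F_1\cap\cdots\cap F_n\) can consist of several vertices, since different vertices of \(M/T\) may lie on exactly the same set of \(n\) facets. The simplest illustration is \(M=S^4\), where \(M/T=\Sigma^2\) has two facets meeting in two vertices. So what you actually get is \(\prod_{F_j\ni v}a_j=\pm k\) (or worse, \(\pm(k_+-k_-)\) if intersection signs disagree), where \(k\) is the number of vertices in \(F_1\cap\cdots\cap F_n\), and this no longer forces \(a_j=\pm1\). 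For a manifold with the cohomology of \(\C P^n\) one can in fact show \(k=1\), but that requires an argument (it is essentially equivalent to knowing that the orbit space is combinatorially a simplex, which is close to the conclusion you are aiming for), and your proposal does not supply it. The clean way around this, and what the paper does, is to invoke the Masuda--Panov face-ring presentation \(H^*(M)\cong\mathbb{Z}[\mathcal{S}(M/T)]/(\theta_1,\dots,\theta_n)\): localness of standardness at a vertex \(v\) makes the restriction of the \(\theta_i\) to the \(n\) facet-generators through \(v\) a unimodular change of basis, so that the remaining generator \(u_{F}\), for \(F\) the facet not through \(v\), freely generates \(H^2(M)\cong\mathbb{Z}\); combining over all vertices and using that every facet contains a vertex then gives \(u_i=\pm x\) for all \(i\) without ever needing \(F_1\cap\cdots\cap F_n\) to be a single point.
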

\begin{proof}
  By Corollary 7.8 of \cite{MR2283418}, the cohomology ring of \(M\) can be computed from the face-poset of the orbit space.
  In particular all the Poincar\'e duals of the characteristic submanifolds of \(M\) are generators of \(H^2(M;\mathbb{Z})\).
  Now it follows from Theorem~\ref{sec:simpl-torus-acti-2} that \(M\) is homeomorphic to \(\C P^n\).
\end{proof}

\section{Rationally elliptic torus manifolds}
\label{sec:torus-with-non-neg}

In this section we prove the following theorem.

\begin{theorem}
\label{sec:torus-manifolds-with}
  Let \(M\) be a simply connected rationally elliptic torus manifold
  with \(H^{\text{odd}}(M;\mathbb{Z})=0\).
  Then \(M\) is homeomorphic to a quotient of a free linear torus action on a product of spheres. 
\end{theorem}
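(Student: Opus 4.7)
The plan is to combine Theorem~\ref{sec:simpl-torus-acti-3}, which reduces the homeomorphism classification to a combinatorial one about the pair $(\mathcal{P}(M/T),\lambda)$, with a classification of which face posets are admissible under rational ellipticity. By Theorem~\ref{sec:simpl-torus-acti-3}, $M$ is determined up to homeomorphism by the data of $\mathcal{P}(M/T)$ together with the characteristic function $\lambda$, so it is enough to show that $\mathcal{P}(M/T)$ is isomorphic to the face poset of a product
\[
Q \;=\; \prod_{i<r}\Sigma^{n_i}\;\times\;\prod_{i\geq r}\Delta^{n_i}.
\]
Granted this, the examples in Section~\ref{sec:pre} identify $Z_Q\cong\prod_{i<r}S^{2n_i}\times\prod_{i\geq r}S^{2n_i+1}$ with its standard linear torus action, and $M=M_Q(\lambda)$ is the quotient of $Z_Q$ by the free linear action of $\ker(\psi\colon T_Q\to T)$.

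The first ingredient is that the hypotheses descend to faces. For a codimension-$k$ face $F$ of $M/T$, the preimage $\pi^{-1}(F)$ is a connected component of the fixed point set of a subtorus $T'\subset T$, hence a simply connected torus manifold of dimension $2(n-k)$ whose odd-degree integral cohomology vanishes by equivariant localization; moreover, fixed point components of torus actions on rationally elliptic spaces are again rationally elliptic. Together with the structural facts recalled in Section~\ref{sec:pre} --- $M/T$ is a nice manifold with corners, each codimension-$k$ face lies in exactly $k$ facets, and every face contains a vertex --- this sets up an induction on $\dim M/T$.

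The main work is the inductive step: I would analyze the combinatorics of $\mathcal{P}(M/T)$ near a vertex $v$. There are exactly $n$ facets through $v$, and by the induction hypothesis each proper face of $M/T$ is the face poset of a product of simplices and orbit spheres $\Sigma^{k_i}$. Using the Masuda--Panov description of $H^*(M;\Q)$ as the face ring of $\mathcal{P}(M/T)$ modulo a linear ideal determined by $\lambda$ (Corollary~7.8 of \cite{MR2283418}) together with the standard constraints on the rational cohomology of a rationally elliptic space, I would show that the facets of $M/T$ partition into two types: simplex-type facets, which share only their common intersection face at $v$, and sphere-type facets, which come in disjoint pairs $(F,F')$ whose union accounts for a $\Sigma^{n_i}$ factor. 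Assembling these factors globally gives the desired product decomposition of $\mathcal{P}(M/T)$.

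The hard part will be this combinatorial extraction of the product structure from the ellipticity constraints. The $f$-vector of $\mathcal{P}(M/T)$ is tied on one side to the Betti numbers of $M$ via the Masuda--Panov formula, and on the other to the elliptic bounds, which must be saturated in order for $\chi(M)$ to equal the number of vertices of the proposed product. I expect that saturation of these inequalities, combined with Poincar\'{e} duality applied face-by-face (which forces the sphere-type facets to come in matched opposite pairs), pins down the global product structure. Once this is verified, the theorem follows from Theorem~\ref{sec:simpl-torus-acti-3} and the product example for moment angle complexes from Section~\ref{sec:pre}.
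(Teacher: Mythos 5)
Your overall reduction is the same as the paper's: invoke Theorem~\ref{sec:simpl-torus-acti-3} to reduce to a statement about $(\mathcal{P}(M/T),\lambda)$, then identify the moment angle complex of a product $\prod\Sigma^{n_i}\times\prod\Delta^{n_i}$ with a product of spheres. The problem is that the remaining step --- showing $\mathcal{P}(M/T)$ actually is such a product poset --- is the entire content of the theorem, and your proposal for it is a plan rather than a proof. The paper's route is quite different from what you sketch: it uses rational ellipticity only once and only locally, to show that every \emph{two-dimensional} face of $M/T$ has at most four vertices (via the elliptic bound $b_2\leq 2$ for the four-dimensional fixed-point component lying over that face), and then proves a purely combinatorial proposition: any nice manifold with corners with acyclic faces whose $2$-faces have at most four vertices has the face poset of $\prod\Sigma^{n_i}\times\prod\Delta^{n_i}$. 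That proposition occupies roughly ten pages of case analysis (how the facets meeting a fixed facet $F$ distribute over the factors of $F$, seven intersection patterns, and an obstruction lemma ruling out a ``cube-like'' configuration whose boundary would be a projective space rather than a homology sphere). Your proposed mechanism --- saturation of elliptic inequalities on the $f$-vector plus face-by-face Poincar\'e duality --- is not shown to force any of this, and it is not clear it can: the difficulty is not counting faces but controlling \emph{how} facets intersect (connected versus disconnected intersections), which is exactly what the paper's sublemmas track.

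Two concrete errors compound the gap. First, your proposed dichotomy of facets is wrong for the sphere factors of dimension at least two: a $\Sigma^{n_i}$ factor with $n_i\geq 2$ is carried by $n_i$ facets with \emph{pairwise nonempty} intersections (distinguished from the simplex case by the total intersection having two components), not by a disjoint pair $(F,F')$; the disjoint-pair picture only describes $\Sigma^1$. Any induction built on that dichotomy will miss these cases. Second, you assert that $\pi^{-1}(F)$ is simply connected for a face $F$; localization gives only $H^{\mathrm{odd}}=0$, hence acyclic faces, not contractible ones --- this is precisely why the paper needs the surgery operations $\alpha_{X,F}$, $\beta_{X,F}$ of Section~\ref{sec:simplyfy} before Lemma~\ref{sec:locally-stand-torus-2} can be applied. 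The first issue is harmless for your use of Theorem~\ref{sec:simpl-torus-acti-3} (which already absorbs it), but it signals that the inductive setup you describe has not been thought through at the level where the real work happens.
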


Since the proof of this theorem is very long we give a short outline
of its proof.

\begin{proof}[Sketch of proof]
  In a first step (Lemma~\ref{sec:torus-manifolds-with-1}) we will
  show that each two-dimensional face of the orbit space of \(M\)
  contains at most four vertices.
  Then we will show in Proposition~\ref{sec:torus-manifolds-with-2}
  that a nice manifold with corners \(Q\), whose two-dimensional faces
  contain at most four vertices and all of whose faces are acyclic, is
  combinatorially equivalent to a product \(\prod_{i<r} \Sigma^{n_i}\times
  \prod_{i\geq r} \Delta^{n_i}\). Here \(\Sigma^{n}\) is the orbit space of
  the linear \(T^n\)-action on \(S^{2n}\) and \(\Delta^{n}\) is the
  \(n\)-dimensional simplex.

  When this is achieved Theorem~\ref{sec:torus-manifolds-with} will
  follow from Theorem~\ref{sec:simpl-torus-acti-3} and the structure
  results described in Section~\ref{sec:pre}.

  The proof of Proposition~\ref{sec:torus-manifolds-with-2} is by
  induction on the dimension of \(Q\).
  The case \(\dim Q=2\) is obvious.
  Therefore we may assume that \(n=\dim Q>2\).
  
  We consider a facet \(F\) of \(Q\). By the induction hypothesis we
  know that there is a combinatorial equivalence
  \begin{equation}
\label{eq:5}
    F\cong \prod_{i<r}\Sigma^{n_i}\times \prod_{i\geq r}\Delta^{n_i}.
  \end{equation}
  The facets \(G_k\) of \(Q\) which meet \(F\) intersect \(F\) in a
  disjoint union of facets of \(F\).
  Since the facets of \(F\) are all of the form
  \begin{equation*}
    \tilde{F}\times\prod_{i_0\neq i<r}\Sigma^{n_i}\times
    \prod_{i_0\neq i\geq r}\Delta^{n_i},
  \end{equation*}
  where \(\tilde{F}\) is a facet of the \(i_0\)-th factor in the
  product (\ref{eq:5}).

  Hence, it follows that each \(G_k\) ``belongs'' to a factor
  \(\Gamma_{j(k)}\) of the product (\ref{eq:5}).
  There are seven cases of how the \(G_k\) which belong to the same
  factor can intersect.
  We call four of these cases exceptional (these are the cases
  \ref{item:3}, \ref{item:4}, \ref{item:6} and \ref{item:8} in the
  list in the proof of Proposition \ref{sec:torus-manifolds-with-2})
  and three of them generic (these are the cases \ref{item:5},
  \ref{item:7}, \ref{item:9}).

  Depending on which cases occur we determine the combinatorial type
  of \(Q\) in Lemmas \ref{sec:torus-manifolds-with-3},
  \ref{sec:rati-ellipt-torus}, \ref{sec:rati-ellipt-torus-1} and
  \ref{sec:torus-manifolds-with-4}.
  With these Lemmas we complete the proof of
  Proposition~\ref{sec:torus-manifolds-with-2}.

  The proofs of the above lemmas are again subdivided into several
  sublemmas.
  In Sublemmas \ref{sec:rati-ellipt-torus-5},
  \ref{sec:rati-ellipt-torus-4}, \ref{sec:rati-ellipt-torus-7} and
  \ref{sec:rati-ellipt-torus-6} we determine the combinatorial type of
  those facets of \(Q\) which belong to a factor \(\Gamma_{j_0}\)
  where one of the exceptional cases occurs.

  Then in Sublemma~\ref{sec:rati-ellipt-torus-11} we use this
  information and Lemma~\ref{sec:rati-ellipt-torus-3} to show that
  \(F\) has at most one factor where one of the exceptional cases can
  appear.
  In Lemma~\ref{sec:rati-ellipt-torus-3} we show that a certain poset
  is not the poset of a nice manifold with corners with only 
  acyclic faces.

  Assuming that one of the exceptional cases appears at the factor
  \(\Gamma_{j_0}\) of \(F\) the combinatorial types of the \(G_k\)
  which do not belong to \(\Gamma_{j_0}\) are then determined in
  Sublemma~\ref{sec:rati-ellipt-torus-8}.
  With the information gained in this sublemma together with the
  results from Sublemmas \ref{sec:rati-ellipt-torus-5},
  \ref{sec:rati-ellipt-torus-4}, \ref{sec:rati-ellipt-torus-7} and
  \ref{sec:rati-ellipt-torus-6} we then can determine the
  combinatorial type of \(Q\) for the case that at one factor of \(F\)
  one of the exceptional cases occurs.
  This completes the proofs of Lemmas \ref{sec:torus-manifolds-with-3},
  \ref{sec:rati-ellipt-torus} and \ref{sec:rati-ellipt-torus-1}.

  So we are left with the case where no exceptional case appears at
  the factors of \(F\).
  For this case we determine in
  Sublemma~\ref{sec:rati-ellipt-torus-10} the combinatorial types of
  the \(G_k\).
  Moreover, in Sublemma~\ref{sec:rati-ellipt-torus-9} we show that
  there is exactly one facet \(H\) of \(Q\) which does not meet \(F\).
  We determine the combinatorial type of \(H\) in the same sublemma.
  With the information on the combinatorial types of all facets of
  \(Q\) we then can determine the combinatorial type of \(Q\).
  This completes the proof of Lemma~\ref{sec:torus-manifolds-with-4}.
\end{proof}

For the proof of Theorem~\ref{sec:torus-manifolds-with} we need the following lemmas.

\begin{lemma}
\label{sec:torus-manifolds-with-1}
  Let \(M\) be a torus manifold with \(H^{\text{odd}}(M;\mathbb{Q})=0\). 
 Assume that \(M\) admits an invariant metric of non-negative sectional curvature or \(M\) is rationally elliptic.
 Then each two-dimensional face of \(M/T\) contains at most four vertices.
\end{lemma}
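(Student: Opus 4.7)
The plan is to analyze each $2$-dimensional face $F\subset M/T$ via its preimage $M_F:=\pi^{-1}(F)$, a $4$-dimensional torus submanifold, and to reduce the count of vertices of $F$ to the classification of $4$-dimensional simply connected rationally elliptic, respectively non-negatively curved, torus manifolds recalled in the Introduction. Writing $T'\subset T$ for the codimension-$2$ subtorus acting trivially on $M_F$, the space $M_F$ is a connected component of $M^{T'}$ and carries an effective residual action of $T^2:=T/T'$ making it a $4$-dimensional torus manifold, whose $T^2$-fixed points are in bijection with the vertices of $F$. It therefore suffices to prove $\chi(M_F)\leq 4$.

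The steps I would carry out, in order, are as follows. (i)~Apply Borel localization for the $T'$-action on $M$ to transfer $H^{\text{odd}}(M;\mathbb{Q})=0$ to $H^{\text{odd}}(M_F;\mathbb{Q})=0$, so that $\chi(M_F)=\#M_F^{T^2}$ equals the number of vertices of $F$. (ii)~Observe that a $4$-dimensional torus manifold is automatically locally standard (by direct inspection of the faithful slice representation at each orbit type), so $F=M_F/T^2$ is a $2$-dimensional nice manifold with corners whose faces are rationally acyclic, forcing $F$ to be a topological disc, i.e.\ a polygon with some number $k$ of vertices on its boundary. (iii)~Conclude that $M_F$ is simply connected, using that $F$ is simply connected and that at each vertex of $F$ the two incident characteristic circles generate $T^2$ by smoothness, so the characteristic map surjects onto $\pi_1(T^2)$. (iv)~Verify that $M_F$ inherits the relevant geometric property: if $M$ is rationally elliptic, then $M_F$ is rationally elliptic by Halperin's theorem on fixed sets of torus actions on rationally elliptic spaces; if $M$ admits an invariant non-negatively curved metric, then $M_F$, as a component of $M^{T'}$, is totally geodesic and thus inherits such a metric. (v)~Apply the $4$-dimensional classification recalled in the Introduction to identify $M_F$ with one of $S^4$, $\mathbb{C}P^2$, $S^2\times S^2$, $\mathbb{C}P^2\#\mathbb{C}P^2$, $\mathbb{C}P^2\#\overline{\mathbb{C}P^2}$, all of which have Euler characteristic at most~$4$.

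The main obstacle I anticipate is step~(iii): establishing simple-connectedness of $M_F$, and more broadly checking that the rational hypothesis $H^{\text{odd}}(M;\mathbb{Q})=0$ already suffices to carry out, on $M_F$, the face-structure analysis of Masuda--Panov originally formulated with integer coefficients. A secondary subtlety is that $M$ is not assumed simply connected in the lemma, so the rational-ellipticity-of-fixed-sets argument in step~(iv) must be applied with some care --- either by passing first to the universal cover of $M$ and lifting the $T$-action, or by using that $\chi(M_F)>0$ forces $\pi_1(M_F)$ to be finite and then comparing with its simply connected universal cover.
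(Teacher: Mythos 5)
Your overall reduction --- pass to the four\hyphen dimensional submanifold \(M_F=\pi^{-1}(F)\), identify the vertices of \(F\) with the \(T^2\)-fixed points of \(M_F\), use localization to get \(H^{\text{odd}}(M_F;\mathbb{Q})=0\) and hence \(\chi(M_F)=\#M_F^{T^2}\), and note that \(M_F\) is totally geodesic in the curved case --- is exactly the paper's first step. The gap is in how you then bound \(\chi(M_F)\). Step (ii) is false as stated: a four-dimensional torus manifold need not be locally standard (weighted projective spaces are the standard counterexample; the slice representation is standard only at fixed points, not along exceptional orbits), and the paper explicitly warns in Section~\ref{sec:pre} that under the hypothesis \(H^{\text{odd}}(M;\mathbb{Q})=0\) --- which is all this lemma assumes --- local standardness may fail. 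Consequently step (iii) does not go through: your argument that the characteristic circles at a vertex generate \(\pi_1(T^2)\) presupposes the locally standard canonical model over a disc, and without it there is no reason for \(M_F\) to be simply connected (exceptional orbits can contribute finite fundamental group even when \(b_1(M_F)=0\)). Since the entire strategy of step (v) is to quote the classification of \emph{simply connected} rationally elliptic, respectively non-negatively curved, four-manifolds, the proof collapses at this point. The repair you sketch (lift to the universal cover) can be made to work in the curved case, where \(b_1=0\) plus Cheeger--Gromoll gives \(|\pi_1(M_F)|<\infty\) and \(\chi(M_F)\le\chi(\widetilde{M}_F)\le 4\), but in the rationally elliptic case it still requires justifying that \(M_F\) (or its cover) is rationally elliptic and handling non-nilpotent spaces, which you leave open.

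The paper avoids the issue entirely and never needs \(\pi_1(M_F)=0\). In the non-negatively curved case it does not invoke the classification of four-manifolds at all, but runs the Alexandrov-geometric argument of \cite[Lemma 4.1]{MR2784821} directly on the orbit space \(F=M_F/T^2\), which bounds the number of boundary corners by four regardless of the fundamental group. In the rationally elliptic case it applies \cite[Corollary 3.3.11]{MR1236839} to conclude that the minimal model of the fixed-point component \(M_F\) is elliptic, and then uses the Friedlander--Halperin inequalities \cite[Theorems 32.6 and 32.10]{MR1802847} to get \(4\ge 2\dim\Pi_\psi^2(M_F)=2b_2(M_F)\), whence \(\chi(M_F)=2+b_2(M_F)\le 4\) by Poincar\'e duality. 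If you want to keep your classification-based route, you must either prove that \(M_F\) is simply connected under the stated hypotheses (which requires more than you provide) or replace steps (ii)--(v) by arguments of the above type that are insensitive to \(\pi_1\).
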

\begin{proof}
  At first note that, by localization in equivariant cohomology, the odd-degree cohomology of all fixed point components of all subtori of \(T\) vanishes.
 
  A two-dimensional face \(F\) is the image of a fixed point component \(M_1\) of a codimension-two subtorus of \(T\) under the orbit map.
  Therefore it follows from the classification of four-dimensional \(T^2\)-manifolds given in \cite{MR0268911} and \cite{MR0348779} that the orbit space is homeomorphic to a two-dimensional disk.
  If \(M\) admits an invariant metric of non-negative sectional curvature, then the same holds for \(M_1\).
  Therefore it follows from the argument in the proof of \cite[Lemma 4.1]{MR2784821} that there are at most four vertices in \(F\).

  Now assume that \(M\) is rationally elliptic.
  Then, by \cite[Corollary 3.3.11]{MR1236839}, the minimal model \(\mathcal{M}(M_1)\) of \(M_1\) is elliptic.
  The number of vertices in \(F\) is equal to the number of fixed points in \(M_1\).
  Since \(\chi(M_1^{T})=\chi(M_1)\), it is also equal to the Euler-characteristic of \(M_1\).
  By \cite[Theorem 32.6]{MR1802847} and \cite[Theorem 32.10]{MR1802847}, we have
  \begin{equation*}
    4\geq 2\dim\Pi_\psi^2(M_1)=2b_2(M_1).
  \end{equation*}
  Here \(\Pi_{\psi}^2(M_1)\) denotes the second pseudo-dual rational homotopy group of \(M_1\).
  Therefore, \(\chi(M_1)\leq 4\) and there are at most four vertices in \(F\).
\end{proof}

\begin{remark}
  If \(M\) is a rationally elliptic torus manifold, then we always
  have \(H^{\text{odd}}(M;\mathbb{Q})=0\) since
  \(\chi(M)=\chi(M^T)>0\).
  Therefore \(H^{\text{odd}}(M;\mathbb{Z})=0\) if and only if
  \(H^*(M;\mathbb{Z})\) is torsion-free.
\end{remark}

\begin{lemma}
  \label{sec:rati-ellipt-torus-3}
  For \(n>2\), there is no  \(n\)-dimensional nice manifold with corners whose faces are all acyclic,
  such that each facet is combinatorially equivalent to an
  \((n-1)\)-dimensional cube and the intersection of any two facets has
  two components.
\end{lemma}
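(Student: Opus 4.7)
The plan is to extract the full combinatorial structure of $Q$ from the hypotheses and derive a contradiction from the Euler characteristic.

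First I will use the cube structure of the facets to count faces. Each facet $F_i$ is an $(n-1)$-cube, hence has $2(n-1)$ sub-facets, each of which by niceness lies in exactly one other facet. Since $F_i\cap F_j$ has two components whenever nonempty and the only disjoint pair of sub-facets of a cube are opposite faces, the $2(n-1)$ sub-facets of $F_i$ split into $n-1$ opposite pairs, one pair for each other facet $F_j$. So every facet meets exactly $n-1$ others, and combined with the hypothesis that any two facets intersect this forces $Q$ to have precisely $n$ facets. An analogous downward induction on codimension shows that every codim-$k$ face is an $(n-k)$-cube, every vertex lies in all $n$ facets, and the number of codim-$k$ faces of $Q$ equals $\binom{n}{k}2^{k-1}$.

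Second, since all faces are acyclic (in particular $Q$ itself, counted as a codim-$0$ face), we have $\chi(Q)=1$. But the face decomposition gives a CW structure with
\[
\chi(Q)=(-1)^n+\sum_{k=1}^n(-1)^{n-k}\binom{n}{k}2^{k-1}=\frac{1+(-1)^n}{2}.
\]
For odd $n$ this is $0$, the desired contradiction.

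For even $n$ the Euler characteristic is consistent and one must work harder. My approach is via the boundary: every multiple intersection of facets is a disjoint union of contractible cubes, so by the nerve theorem $\partial Q$ is homotopy equivalent to the refined nerve of the facet cover. The combinatorial hypotheses strongly constrain this nerve --- both its face counts and the sub-cube incidences of the components are forced --- and I would argue that up to relabelling opposite sides it must agree with the refined facet-nerve of $\partial([0,1]^n/\mathbb{Z}_2)\cong\R P^{n-1}$, where $\mathbb{Z}_2$ acts by the central symmetry $x\mapsto\mathbf{1}-x$. Then $\partial Q\simeq\R P^{n-1}$, while $Q$ acyclic together with Poincar\'e--Lefschetz duality forces $\partial Q$ to be an integer homology $(n-1)$-sphere; since $H_1(\R P^{n-1};\mathbb{Z})=\mathbb{Z}/2$ for $n\ge 3$, we are done.

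The main obstacle is the identification of the refined nerve with $\R P^{n-1}$ in the even-$n$ case. Different gluings of the cubical facets are a priori possible and could in principle produce different nerves, so one has to verify that the combinatorial hypotheses, together with the local topological manifold condition at codim-$3$ faces, rigidify the nerve up to relabelling. This reduces to a concrete statement about cube gluings and appears to be the heart of the argument for even $n$.
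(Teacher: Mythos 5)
Your odd-$n$ argument is correct and is genuinely different from (and more elementary than) what the paper does: the paper makes no parity distinction and never counts faces. The count $\binom{n}{k}2^{k-1}$ of codimension-$k$ faces is indeed forced (there are exactly $n$ facets, each pair meeting in an opposite pair of cube facets, so $F_{i_1}\cap\dots\cap F_{i_k}$ is a disjoint union of $2^{k-1}$ combinatorial $(n-k)$-cubes), and since all faces are acyclic compact manifolds with boundary, additivity of compactly supported Euler characteristics gives $\chi(Q)=\sum_F(-1)^{\dim F}=\tfrac{1+(-1)^n}{2}$, contradicting $\chi(Q)=1$ for odd $n$. This is a nice shortcut the paper does not have.

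For even $n$, however, your proposal has a genuine gap, and it sits exactly where the paper's entire proof lives. You reduce to showing that the (refined) nerve of the facet cover of $\partial Q$ agrees with that of $\partial([0,1]^n/\mathbb{Z}_2)\cong\R P^{n-1}$, and you explicitly leave the rigidity of the cube gluings open. The paper closes precisely this step, and purely combinatorially: for each $i$ choose a poset isomorphism $\mathcal{P}(F_i)\to\mathcal{P}(F_i')$ carrying $F_i\cap F_j$ to $F_i'\cap F_j'$; the obstruction to these agreeing on overlaps is killed by the observation that $\Aut(\mathcal{P}(I^{n-1}))$ contains, for each opposite pair of facets, the involution interchanging the two members of that pair while fixing all other facets, so the facet-wise isomorphisms can be corrected one pair at a time and then glued to a global isomorphism $\mathcal{P}(Q)\to\mathcal{P}(Q')$. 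No appeal to a ``local topological manifold condition at codimension-$3$ faces'' is needed. Two further points you should repair if you pursue your route: the faces are only assumed \emph{acyclic}, not contractible, so before invoking any nerve theorem (or, as the paper does, Lemma \ref{sec:locally-stand-torus-2} to promote the poset isomorphism to a homeomorphism $\partial Q\to\partial Q'$) one must first replace the faces by contractible ones using the operation $\alpha$ of Section \ref{sec:simplyfy}; and the final contradiction is exactly the one you and the paper both intend, $\partial Q$ an integral homology $(n-1)$-sphere versus $H_1(\R P^{n-1};\mathbb{Z})=\mathbb{Z}/2$.
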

\begin{proof}
  Assume that there is such a manifold \(Q\) with corners.
  Then the boundary of \(Q\) is a homology sphere.
  Moreover by applying the construction \(\alpha\) from
  Section~\ref{sec:simplyfy} to \(Q\) we may assume that all faces of
  \(Q\) of codimension at least one are contractible.

  Let \(Q'=[-1,1]^n/\mathbb{Z}_2\), where \(\mathbb{Z}_2\) acts on
  \([-1,1]^n\) by multiplication with \(-1\) on each factor.
  Then the boundary of \(Q'\) is a real projective space of dimension
  \(n-1\).

  Moreover, for each facet \(F_i\) of \(Q\) and each facet \(F'_i\) of
  \(Q'\) there is an isomorphism of face posets
  \(\mathcal{P}(F_i)\rightarrow \mathcal{P}(F'_i)\)
  such that \(F_i\cap F_j\) is mapped to \(F_i'\cap F_j'\).

  Since there are automorphisms of \(\mathcal{P}(F_i)\) which
  interchange the two components of \(F_i\cap F_j\) and leave the
  other facets of \(F_i\) unchanged, we can glue these isomorphisms
  together to get an isomorphism of face posets
  \begin{equation*}
    \mathcal{P}(Q)\rightarrow \mathcal{P}(Q').
  \end{equation*}
Since the faces of \(Q\) and \(Q'\) of codimension at least one are
contractible we obtain a homeomorphism \(\partial Q\rightarrow \partial
Q'\).
This is a contradiction because \(\partial Q\) is a homology sphere and
\(\partial Q'\) a projective space.
\end{proof}

\begin{proposition}
\label{sec:torus-manifolds-with-2}
  Let \(Q\) be a nice manifold with only acyclic faces such that each
  two-dimensional face of \(Q\) has at most four vertices.
  Then \(\mathcal{P}(Q)\) is isomorphic to the face poset of a product \(\prod_i \Sigma^{n_i} \times \prod_i \Delta^{n_i}\).
  Here \(\Sigma^m\) is the orbit space of the linear \(T^m\)-action on \(S^{2m}\) and \(\Delta^m\) is an \(m\)-dimensional simplex.
\end{proposition}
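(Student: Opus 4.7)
The plan is to proceed by induction on $n = \dim Q$. For $n \le 2$ the classification is immediate: a one-dimensional nice acyclic manifold with corners is an interval $\Delta^1 = \Sigma^1$, and a two-dimensional one with at most four vertices is a bigon $\Sigma^2$, a triangle $\Delta^2$, or a square $\Delta^1 \times \Delta^1$.

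For the inductive step, fix a facet $F$ of $Q$. Then $F$ is itself a nice manifold with corners, its faces are faces of $Q$ and hence acyclic, and the two-face bound is inherited. By induction,
\[
  F \;\cong\; \prod_{i<r} \Sigma^{n_i} \times \prod_{i \geq r} \Delta^{n_i}
\]
combinatorially; call the factors $\Gamma_1,\dots,\Gamma_s$. Since $Q$ is nice, every other facet $G$ of $Q$ meets $F$ in a disjoint union of facets of $F$, and every facet of $F$ has the form $\tilde F \times \prod_{i_0 \neq i<r} \Sigma^{n_i} \times \prod_{i_0 \neq i\geq r} \Delta^{n_i}$ for some $i_0$. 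Hence each such $G$ carries a well-defined ``type'' $j(G) \in \{1,\dots,s\}$, namely the index of the factor $\Gamma_{j_0}$ being cut. The first step is then to enumerate, for each fixed $j_0$, the combinatorial possibilities for the collection of facets of $Q$ of type $j_0$. Using niceness, the two-face bound, and the restrictions on how pairs of such facets can meet, I expect seven cases: three \emph{generic} ones, in which the family of $G$'s of type $j_0$ configures as the new facets that arise when $\Gamma_{j_0}$ is replaced by $\Sigma^{n_{j_0}+1}$, $\Delta^{n_{j_0}+1}$, or $\Gamma_{j_0}\times \Delta^1$; and four \emph{exceptional} ones, in which two of the $G$'s meet $F$ in more than one common facet.

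The key auxiliary input will be Lemma \ref{sec:rati-ellipt-torus-3}, which rules out a nice acyclic manifold with corners all of whose facets are cubes meeting pairwise in two components. Combined with the two-face bound, this should force at most one factor $\Gamma_{j_0}$ to carry an exceptional configuration. The second step is therefore: (a) for each of the four exceptional configurations at $\Gamma_{j_0}$, determine both the combinatorics of the $G$'s of type $j_0$ and, via the two-faces of $Q$ that involve them, the combinatorial types of the $G$'s of every other type; (b) assemble these into an explicit product description of $Q$. The third step handles the purely generic case: show that exactly one facet $H$ of $Q$ is disjoint from $F$, determine its combinatorial type from that of $F$, and glue $F$, $H$, and the $G$'s together to exhibit $\mathcal{P}(Q)$ as the face poset of $\prod_i \Sigma^{m_i} \times \prod_j \Delta^{m_j}$ with a possibly larger parameter $r$.

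The main obstacle I foresee is the bookkeeping in step (a): an exceptional configuration on one factor propagates rigid constraints on every other facet of $Q$, and one must verify that these constraints are mutually compatible and collectively still yield a product structure. The role of Lemma \ref{sec:rati-ellipt-torus-3} in ruling out two simultaneous exceptional factors is what keeps this case analysis finite. Once Proposition \ref{sec:torus-manifolds-with-2} is established in this form, Theorem \ref{sec:torus-manifolds-with} follows directly: by Theorem \ref{sec:simpl-torus-acti-3} the homeomorphism type of $M$ is determined by $(\mathcal{P}(M/T),\lambda)$, and by the moment-angle description recalled in Section \ref{sec:pre} the moment-angle complex of $\prod_i \Sigma^{n_i} \times \prod_j \Delta^{n_j}$ is a product of spheres, with $\lambda$ corresponding to a free linear torus action whose quotient recovers $M$.
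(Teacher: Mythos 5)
Your outline reproduces the architecture of the paper's proof: induction on $\dim Q$, fixing a facet $F\cong\prod_i\Gamma_i$, sorting the remaining facets $G$ by the factor of $F$ they ``belong'' to, a seven-fold case division, the cube lemma (Lemma~\ref{sec:rati-ellipt-torus-3}) to limit the special configurations to a single factor, and a final generic case in which exactly one facet $H$ is disjoint from $F$. However, as a proof it has two genuine problems. First, your description of the case dichotomy is backwards and internally inconsistent. The configurations in which the facets of type $j_0$ assemble as the non-$F$ facets of $\Sigma^{n_{j_0}+1}$ or $\Delta^{n_{j_0}+1}$ are precisely the \emph{exceptional} ones --- they are detected by whether $\bigcap_{j(k)=j_0}G_k$ is nonempty (resp.\ meets $F$ in a connected set), and at most one factor of $F$ can carry such a configuration; the \emph{generic} outcome, when no factor is exceptional, is always $Q\cong F\times\Delta^1$, which is the only situation compatible with your own step three (a unique facet $H$ disjoint from $F$). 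Your characterization of the exceptional cases as ``two of the $G$'s meet $F$ in more than one common facet'' matches none of them: $G\cap F$ is disconnected only when a one-dimensional factor is cut by a single facet $G$, and the other exceptional cases are each detected by connectivity or nonemptiness of a multiple intersection, not by how a pair of $G$'s meets $F$.

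Second, the two load-bearing assertions are asserted rather than proved. Lemma~\ref{sec:rati-ellipt-torus-3} by itself does not force ``at most one exceptional factor'': in the paper it excludes only the coexistence of two factors of the first exceptional type (a single $G$ cutting a one-dimensional factor), where the relevant intersections really are cubes meeting pairwise in two components; every other pair of exceptional types is excluded by computing $G_{k_0}\cap G_{k_1}$ from the point of view of each of the two facets and deriving a contradiction (connected versus disconnected, or empty versus nonempty). And the ``bookkeeping in step (a)'' that you defer is where essentially all of the content lies: one must determine, by a further induction, the full combinatorial type of every facet $G_k$ (not just its intersection with $F$), verify that each $G_k\cap G_{k'}$ is connected or not exactly as in the model product, and only then glue the poset isomorphisms of the facets --- after correcting them by the automorphisms that swap the two components of $\bigcap_{j(k)=j_1}G_k$ for two-dimensional $\Sigma$-factors --- into an isomorphism $\mathcal{P}(Q)\to\mathcal{P}(P)$. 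Without these verifications the proposal is a plan for the paper's proof, not a proof.
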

\begin{proof}
  We prove this proposition by induction on the dimension of \(Q\).
  If \(\dim Q=2\), there is nothing to show.

  Therefore let us assume that \(\dim Q>2\) and that all facets of \(Q\) are combinatorially equivalent to  a product of \(\Sigma^{n_i}\)'s and \(\Delta^{n_i}\)'s.

  Let \(F\) be a facet of \(Q\), such that \(F\) is combinatorially equivalent to \(\prod_i \Gamma_i\), where \(\Gamma_i= \Sigma^{n_i}\) for \(i<r\) and \(\Gamma_i=\Delta^{n_i}\) for \(i\geq r\).
  We fix this facet \(F\) for the rest of this section.

  In the following we will denote \(\tilde{\Gamma}_i=\Sigma^{n_i-1}\) and \(\bar{\Gamma}_i=\Sigma^{n_i+1}\) if \(i<r\)  or \(\tilde{\Gamma}_i=\Delta^{n_i-1}\) and \(\bar{\Gamma}_i=\Delta^{n_i+1}\) if \(i \geq r\).

Each facet of \(Q\) which meets \(F\) intersects \(F\) in a union of facets of \(F\).
Since \(\mathcal{P}(F)\cong \mathcal{P}(\prod_i \Gamma_i)\), the facets of \(F\) are of the form
\begin{equation*}
  F_j\times \prod_{i\neq j} \Gamma_i,
\end{equation*}
where \(F_j\) is a facet of \(\Gamma_j\).
Therefore each facet  \(G_{k}\) of \(Q\) which meets \(F\) ``belongs'' to a factor \(\Gamma_{j(k)}\) of \(F\), i.e.
  \begin{equation*}
    F\cap G_{k}\cong\prod_{i\neq j(k)} \Gamma_i \times \tilde{F}_{k},
  \end{equation*}
  where \(\tilde{F}_{k}\) is a union of facets of the \(j(k)\)-th factor \(\Gamma_{j(k)}\) in \(F\).

If \(\dim \Gamma_j=1\), then \(\Gamma_j\) is combinatorially equivalent to an interval.
Hence, it has two facets which do not intersect.
Therefore in this case there are at most two facets of \(Q\) which belong to \(\Gamma_j\).
If there is exactly one such facet \(G_{k}\), then the intersection \(F\cap G_{k}\) has two components.
Otherwise the intersections \(F\cap G_{k}\) are connected.

If \(\Gamma_j=\Sigma^{n_j}\) with \(n_j>1\), then \(\Gamma_j\) has exactly \(n_j\) facets.
These facets have pairwise non-trivial intersections.
Therefore \(F\cap G_{k}\) is connected if \(j(k)=j\).
And there are exactly \(n_j\) facets of \(Q\) which belong to \(\Gamma_j\).

If \(\Gamma_j=\Delta^{n_j}\) with \(n_j>1\), then \(\Gamma_j\) has exactly \(n_j+1\) facets.
These facets have pairwise non-trivial intersections.
Hence, \(F\cap G_{k}\) is connected if \(j(k)=j\) and there are exactly \(n_j+1\) facets of \(Q\) which belong to \(\Gamma_j\).

  Therefore there are the following cases:
  \begin{enumerate}
  \item \(\dim \Gamma_j=1\) and one of the following statements holds:
    \begin{enumerate}
    \item\label{item:3} There is exactly one facet \(G_{k}\) which belongs
      to \(\Gamma_j\).
    \item\label{item:4} There are exactly two facets \(G_{k_1}\),
      \(G_{k_2}\) which belong to \(\Gamma_j\) and \(G_{k_1}\cap G_{k_2}\neq\emptyset\).
    \item\label{item:5} There are exactly two facets \(G_{k_1}\),
      \(G_{k_2}\) which belong to \(\Gamma_j\) and \(G_{k_1}\cap G_{k_2}=\emptyset\).
    \end{enumerate}
  \item \(\Gamma_j\cong \Sigma^{n_j}\) with \(n_j>1\) and one of the following statements holds:
    \begin{enumerate}
    \item\label{item:6}  There are exactly \(n_j\) facets
      \(G_{k_1},\dots, G_{k_{n_j}}\) which belong to \(\Gamma_j\) and the union of those components of \(\bigcap_{i=1}^{n_j}G_{k_i}\) which meet \(F\) is connected.
    \item\label{item:7} There are exactly \(n_j\) facets
      \(G_{k_1},\dots, G_{k_{n_j}}\) which belong to \(\Gamma_j\) and
      the union of those components of \(\bigcap_{i=1}^{n_j}G_{k_i}\)
      which meet \(F\) is not connected.
    \end{enumerate}
 \item \(\Gamma_j\cong \Delta^{n_j}\) with \(n_j>1\) and one of the following statements holds:
    \begin{enumerate}
    \item\label{item:8}  There are exactly \(n_j+1\) facets
      \(G_{k_1},\dots, G_{k_{n_j+1}}\) which belong to \(\Gamma_j\) and \(\bigcap_{i=1}^{n_j+1}G_{k_i}\neq\emptyset\).
    \item\label{item:9} 
 There are exactly \(n_j+1\) facets
      \(G_{k_1},\dots, G_{k_{n_j+1}}\) which belong to \(\Gamma_j\) and \(\bigcap_{i=1}^{n_j+1}G_{k_i}=\emptyset\).
    \end{enumerate}
  \end{enumerate}

  The proof of the proposition will be completed by the following lemmas.

  \begin{lemma}
    \label{sec:torus-manifolds-with-3}
    If, in the above situation, there is a
    \(j_0\) such that
    \begin{itemize}
    \item \(\Gamma_{j_0}\cong \Sigma^{n_{j_0}}\) with \(n_{j_0}>1\) and the
      union of those components of \(\bigcap_{k;\; j(k)=j_0} G_k\)
      which meet \(F\) is connected, or
    \item \(\Gamma_{j_0}\cong \Delta^{n_{j_0}}\) with \(n_{j_0}>1\) and
      \(\bigcap_{k;\; j(k)=j_0} G_k\neq \emptyset\),
    \end{itemize}
    i.e., one of the cases \ref{item:6} and \ref{item:8} appears at
    \(\Gamma_{j_0}\), then there is an isomorphism of face posets
    \(\mathcal{P}(Q)\rightarrow \mathcal{P}(\bar{\Gamma}_{j_0}\times
    \prod_{i\neq j_0} \Gamma_i)\)  which sends each \(G_{k}\) to a facet
    belonging to the \(j(k)\)-th factor and \(F\) to a facet belonging to
    \(\bar{\Gamma}_{j_0}\). 
  \end{lemma}

  \begin{lemma}
\label{sec:rati-ellipt-torus}
    If, in the above situation, there is a \(j_0\) such that \(\dim \Gamma_{j_0}=1\) and there is exactly one facet
    \(G_{k_0}\) which belongs to \(\Gamma_{j_0}\), i.e.,  the case \ref{item:3} appears at \(\Gamma_{j_0}\), then there
    is an isomorphism of face posets \(\mathcal{P}(Q)\rightarrow
    \mathcal{P}(\Sigma^2\times \prod_{i\neq j_0} \Gamma_i)\), which
    sends each \(G_{k}\) to a facet belonging to the \(j(k)\)-th factor
    and \(F\) to a facet belonging to \(\Sigma^2\).
  \end{lemma}

  \begin{lemma}
\label{sec:rati-ellipt-torus-1}
 If, in the above situation, there is a \(j_0\) such that \(\dim \Gamma_{j_0}=1\) and there are exactly two facets
    \(G_{k_0}\) and \(G_{k_0'}\) which belong to \(\Gamma_{j_0}\) and
    \(G_{k_0}\cap G_{k_0'}\neq \emptyset\), i.e.,
 the case \ref{item:4} appears at \(\Gamma_{j_0}\), then there is
 an isomorphism of face posets
 \(\mathcal{P}(Q)\rightarrow\mathcal{P}(\Delta^2\times \prod_{i\neq
   j_0} \Gamma_i)\) which sends each \(G_{k}\) to a facet belonging
 to the \(j(k)\)-th factor and \(F\) to a facet belonging to
 \(\Delta^2\).
  \end{lemma}

In particular, if one of the cases \ref{item:3}, \ref{item:4}, \ref{item:6} and \ref{item:8} appears at \(\Gamma_{j_0}\), then at the other \(\Gamma_j\) only the cases \ref{item:5}, \ref{item:7} and \ref{item:9} can appear.

  \begin{lemma}
    \label{sec:torus-manifolds-with-4}
    If, in the above situation, at all factors \(\Gamma_j\) only the cases \ref{item:5}, \ref{item:7} and \ref{item:9} appear, then there is an isomorphism of face posets \(\mathcal{P}(Q)\rightarrow \mathcal{P}(F\times [0,1])\) which sends each \(G_{k}\) to \((G_{k}\cap F)\times [0,1]\) and \(F\) to \(F\times \{0\}\).
  \end{lemma} 
\end{proof}

  Now we prove by induction on the dimension of \(Q\) the lemmas from above.
  For \(\dim Q=2\), these lemmas are obvious. Therefore we may assume that \(n=\dim Q>2\) and that all the lemmas are proved in dimensions less than \(n\).

  \begin{sublemma}
\label{sec:rati-ellipt-torus-5}
    Assume that the case \ref{item:6} appears at the factor
    \(\Gamma_{j_0}\) of \(F\).
    Let \(G_{k_0}\) be a facet of \(Q\) which belongs to
    \(\Gamma_{j_0}\).
    Then the following holds:
    \begin{enumerate}
    \item The facets of \(G_{k_0}\) are given by the components of the
      intersections \(G_{k}\cap G_{k_0}\) and \(F\cap G_{k_0}\).
    \item \(G_{k}\cap G_{k_0}\) is connected if and only if \(F\cap
      G_k\) is connected.
    \item There is a combinatorial equivalence
      \begin{equation*}
        \mathcal{P}(G_{k_0})\rightarrow \mathcal{P}(\prod_i \Gamma_i),
      \end{equation*}
such that \(F\cap G_{k_0}\) corresponds to a facet of the \(j_0\)-th
factor and the components of \(G_k\cap G_{k_0}\) correspond to facets
of the \(j(k)\)-th factor.
    \end{enumerate}
  \end{sublemma}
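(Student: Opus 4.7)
The proof will be by induction on $n=\dim Q$, using Proposition~\ref{sec:torus-manifolds-with-2} in dimension $n-1$. Since $G_{k_0}$ is a facet of the nice manifold with corners $Q$, it is itself a nice $(n-1)$-manifold with corners whose faces (being faces of $Q$) are acyclic and whose two-dimensional faces still have at most four vertices. The induction hypothesis therefore gives an isomorphism $\mathcal{P}(G_{k_0})\cong \mathcal{P}(\prod_l \Gamma''_l)$ for some collection of $\Sigma$'s and $\Delta$'s.

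Claim (1) follows from niceness of $Q$: every codimension-two face of $Q$ is contained in exactly two facets, so each facet of $G_{k_0}$ is a connected component of $G_{k_0}\cap G$ for some facet $G$ of $Q$. Once claim (3) is established, a facet-count comparison between $G_{k_0}$ and $F$ will show that every such $G$ must meet $F$, hence equals $F$ or one of the $G_k$.

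For claim (3) I would first identify $F\cap G_{k_0}$ explicitly. In case~\ref{item:6}, the facets of $\Gamma_{j_0}=\Sigma^{n_{j_0}}$ are pairwise intersecting, so $\tilde{F}_{k_0}$ is a single facet of $\Gamma_{j_0}$, combinatorially $\tilde{\Gamma}_{j_0}=\Sigma^{n_{j_0}-1}$. Thus $F\cap G_{k_0}$ is a single facet of $G_{k_0}$ with poset $\mathcal{P}(\tilde{\Gamma}_{j_0}\times \prod_{i\neq j_0}\Gamma_i)$. In the product decomposition of $G_{k_0}$ furnished by induction, this facet must correspond to replacing some factor $\Gamma''_{l_0}$ by one of its facets, and matching the two product descriptions (using the essential uniqueness of factorization of such a poset into irreducible $\Sigma$'s and $\Delta$'s) lets me identify the $\Gamma''_l$ for $l\neq l_0$ with the $\Gamma_i$ for $i\neq j_0$, and $\Gamma''_{l_0}$ with some $\Sigma$ or $\Delta$ having $\tilde{\Gamma}_{j_0}$ as a facet.

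The main obstacle is to pin $\Gamma''_{l_0}$ down to exactly $\Gamma_{j_0}=\Sigma^{n_{j_0}}$, ruling out $\bar{\Gamma}_{j_0}=\Sigma^{n_{j_0}+1}$ or any $\Delta$-type factor sharing the facet type $\tilde{\Gamma}_{j_0}$. I would resolve this by counting the facets of $G_{k_0}$ lying in the $j_0$-direction: beyond $F\cap G_{k_0}$, these are the components of the intersections $G_{k_i}\cap G_{k_0}$ for the $n_{j_0}-1$ remaining facets $G_{k_i}$ belonging to $\Gamma_{j_0}$. The connectedness hypothesis of case~\ref{item:6} (together with niceness) forces each such intersection to be connected, giving exactly $n_{j_0}$ facets in the $j_0$-direction, which matches the facet count of $\Sigma^{n_{j_0}}$ precisely. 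Claim (2) then falls out of this identification: under the product structure of $G_{k_0}$ from claim (3), components of $G_k\cap G_{k_0}$ are in bijection with the facets of $\Gamma_{j(k)}$ contributed by $G_k$, and by the analogous bijection for $F$ this is the number of components of $F\cap G_k$; hence $G_k\cap G_{k_0}$ is connected precisely when $F\cap G_k$ is.
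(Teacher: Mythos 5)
Your overall skeleton (identify $F\cap G_{k_0}$ as $\tilde{\Gamma}_{j_0}\times\prod_{i\neq j_0}\Gamma_i$, then leverage the known combinatorial type of $G_{k_0}$) starts out parallel to the paper, but at the crux you substitute a different mechanism that has a genuine gap. The paper does \emph{not} only feed the unlabeled Proposition~\ref{sec:torus-manifolds-with-2} into the induction: the Lemmas \ref{sec:torus-manifolds-with-3}--\ref{sec:torus-manifolds-with-4} are themselves proved by simultaneous induction on dimension, and the proof of this sublemma applies the $(n-1)$-dimensional Lemma \ref{sec:torus-manifolds-with-3} (or Lemma \ref{sec:rati-ellipt-torus} when $n_{j_0}=2$) to the pair $(G_{k_0},\,F\cap G_{k_0})$, after checking that case \ref{item:6} (resp.\ \ref{item:3}) occurs at the factor $\tilde{\Gamma}_{j_0}$ of $F\cap G_{k_0}$. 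That lemma outputs exactly the \emph{labelled} equivalence $\mathcal{P}(G_{k_0})\cong\mathcal{P}(\bar{\tilde{\Gamma}}_{j_0}\times\prod_{i\neq j_0}\Gamma_i)=\mathcal{P}(\prod_i\Gamma_i)$ sending each component of $G_k\cap G_{k_0}$ to a facet of the $j(k)$-th factor. Your replacement --- abstract unique factorization of the poset plus a facet count --- does not recover this: an unlabelled isomorphism $\mathcal{P}(G_{k_0})\cong\mathcal{P}(\prod_l\Gamma''_l)$ together with the type of one facet does not determine which factor each $G_k\cap G_{k_0}$ lands in (needed for part (3) and for everything downstream, e.g.\ Sublemma~\ref{sec:rati-ellipt-torus-11}), especially when several $\Gamma_i$ are isomorphic.

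The most concrete hole is in your step pinning down $\Gamma''_{l_0}$: you assert that ``the connectedness hypothesis of case \ref{item:6} (together with niceness) forces each intersection $G_{k_i}\cap G_{k_0}$ to be connected.'' Case \ref{item:6} only says that the union of those components of $\bigcap_{j(k)=j_0}G_k$ \emph{which meet $F$} is connected; it does not exclude extra components of a pairwise intersection $G_{k_i}\cap G_{k_0}$ lying away from $F$. Connectedness of these intersections is statement (2) of the sublemma --- part of the conclusion --- and in the paper it is \emph{deduced from} the product structure (facets of a $\Sigma^{m}$- or $\Delta^{m}$-factor with $m\ge 2$ pairwise intersect), not used to establish it. So your facet count in the ``$j_0$-direction'' is circular as written. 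A similar issue infects your proof of (1): the ``facet-count comparison'' showing every facet of $G_{k_0}$ comes from some $G_k$ again presupposes (2), whereas the paper gets (1) directly from the fact that in $\prod_i\Gamma_i$ every facet meets the facet corresponding to $F\cap G_{k_0}$. To repair your argument you would essentially have to reprove the lower-dimensional Lemma \ref{sec:torus-manifolds-with-3} on the fly; the clean fix is to put the Lemmas, not just the Proposition, into the induction hypothesis.
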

  \begin{proof}
  We consider the inclusion of \(F\cap G_{k_0}\hookrightarrow
  G_{k_0}\), where \(G_{k_0}\) is a facet of \(Q\) which belongs to \(\Gamma_{j_0}\).
  Then \(F\cap G_{k_0}\) is a facet of \(G_{k_0}\) and there is a combinatorial equivalence
  \begin{equation*}
    F\cap G_{k_0}\cong \Sigma^{n_{j_0}-1}\times \prod_{i\neq j_0} \Gamma_i
  \end{equation*}
  such that each component of \(G_{k_0}\cap G_{k}\cap F\) corresponds to a facet of the \(j(k)\)-th factor.
  Moreover, the facets of \(G_{k_0}\) which meet \(F\cap G_{k_0}\) are given by those components of the \(G_{k}\cap G_{k_0}\) which meet \(F\).
  Since the case \ref{item:6} appears at the factor \(\Gamma_{j_0}\) of \(F\) it follows that there is only one component of \(\bigcap_{k;\;j(k)=j_0} G_{k}=\bigcap_{k;\;j(k)=j_0} (G_{k}\cap G_{k_0})\) which meets \(F\).
  Therefore one of the cases \ref{item:6} or \ref{item:3} appears at the factor \(\Sigma^{n_{j_0}-1}\) of \(F\cap G_{k_0}\).

  Hence, it follows from the induction hypothesis that \(G_{k_0}\) is
  combinatorially equivalent to \(\prod_i \Gamma_i\) in such a way
  that the component of \(G_{k_0}\cap G_{k}\) which meets \(F\) is
  mapped to a facet which belongs to the \(j(k)\)-th factor and
  \(F\cap G_{k_0}\) is mapped to a facet which belongs to the
  \(j_0\)-th factor.
  
  Since all facets of \(\prod_i\Gamma_i\) meet the facet which
  corresponds to \(F\cap G_{k_0}\) it follows that each facet of
  \(G_{k_0}\) is a component of some intersection \(G_k\cap G_{k_0}\).

  Moreover, \(G_{k}\cap G_{k_0}\) is connected for all \(k\) with
  \(n_{j(k)}>1\) because for these \(k\) the facets of the factor
  \(\Gamma_{j(k)}\) of \(G_{k_0}\) have pairwise non-trivial intersections.
  If \(n_{j(k)}=1\) and \(j(k)\neq j_0\), then \(G_{k}\cap G_{k_0}\)
  is disconnected if and only if \(G_{k}\cap G_{k_0}\cap F\) is
  disconnected because \(F\cap G_{k_0}\) and the components of
  \(G_k\cap G_{k_0}\) are facets of different factors of
  \(G_{k_0}\cong \prod_i \Gamma_i\).
  Since \(G_k\) and \(G_{k_0}\) belong to different factors of \(F\),
  it follows that this last statement is true if and only if \(G_{k}\cap F\) is disconnected.
 \end{proof}
 
 \begin{sublemma}
\label{sec:rati-ellipt-torus-4}
    Assume that the case \ref{item:8} appears at the factor
    \(\Gamma_{j_0}\) of \(F\).
    Let \(G_{k_0}\) be a facet of \(Q\) which belongs to
    \(\Gamma_{j_0}\).
    Then the following holds:
    \begin{enumerate}
    \item The facets of \(G_{k_0}\) are given by the components of the
      intersections \(G_{k}\cap G_{k_0}\) and \(F\cap G_{k_0}\).
    \item \(G_{k}\cap G_{k_0}\) is connected if and only if \(F\cap
      G_k\) is connected.
    \item There is a combinatorial equivalence
      \begin{equation*}
        \mathcal{P}(G_{k_0})\rightarrow \mathcal{P}(\prod_i \Gamma_i),
      \end{equation*}
such that \(F\cap G_{k_0}\) corresponds to a facet of the \(j_0\)-th
factor and the components of \(G_k\cap G_{k_0}\) correspond to facets
of the \(j(k)\)-th factor.
    \end{enumerate}
  \end{sublemma}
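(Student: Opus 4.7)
The plan is to mirror the proof of Sublemma~\ref{sec:rati-ellipt-torus-5} verbatim, replacing the sphere factor \(\Sigma^{n_{j_0}}\) by the simplex \(\Delta^{n_{j_0}}\). The key point that makes this substitution work is that \(\Delta^{n_{j_0}}\) has \(n_{j_0}+1\) pairwise-intersecting facets, so removing one of them (namely the one corresponding to \(G_{k_0}\)) leaves \(n_{j_0}\) facets whose common intersection is still nonempty; hence case~\ref{item:8} at \(\Gamma_{j_0}\) in \(F\) will propagate to case~\ref{item:8} (or to case~\ref{item:3} when \(n_{j_0}=2\)) at the corresponding factor \(\Delta^{n_{j_0}-1}\) of \(F\cap G_{k_0}\).

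Concretely, I would first observe that \(F\cap G_{k_0}\) is a facet of \(G_{k_0}\) and that by the assumed combinatorial equivalence \(F\cong \prod_i\Gamma_i\) it is of the form
\begin{equation*}
  F\cap G_{k_0}\cong \Delta^{n_{j_0}-1}\times \prod_{i\neq j_0}\Gamma_i,
\end{equation*}
with each component of \(G_k\cap G_{k_0}\cap F\) corresponding to a facet of the \(j(k)\)-th factor. The facets of \(G_{k_0}\) meeting \(F\cap G_{k_0}\) are exactly the components of \(G_k\cap G_{k_0}\) that meet \(F\). Since case~\ref{item:8} holds at \(\Gamma_{j_0}\), the set \(\bigcap_{k;\, j(k)=j_0}G_k\) is nonempty, and this intersection meets \(G_{k_0}\); hence one of the cases~\ref{item:8} or~\ref{item:3} appears at the factor \(\Delta^{n_{j_0}-1}\) of \(F\cap G_{k_0}\).

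By the inductive hypothesis applied to the \((n-1)\)-dimensional \(G_{k_0}\) via Lemma~\ref{sec:torus-manifolds-with-3} or Lemma~\ref{sec:rati-ellipt-torus}, we then obtain a combinatorial equivalence \(\mathcal{P}(G_{k_0})\rightarrow \mathcal{P}(\prod_i\Gamma_i)\) sending \(F\cap G_{k_0}\) to a facet of the \(j_0\)-th factor and the component of \(G_k\cap G_{k_0}\) meeting \(F\) to a facet of the \(j(k)\)-th factor. Claim~(3) is then immediate. Claim~(1) follows because every facet of \(\prod_i\Gamma_i\) meets the distinguished \(j_0\)-facet, so every facet of \(G_{k_0}\) is a component of some \(G_k\cap G_{k_0}\) or equals \(F\cap G_{k_0}\). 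For claim~(2): when \(n_{j(k)}>1\) the factor \(\Gamma_{j(k)}\) of \(G_{k_0}\) has pairwise-intersecting facets, so \(G_k\cap G_{k_0}\) is connected; when \(n_{j(k)}=1\), disconnectedness of \(G_k\cap G_{k_0}\) is equivalent to disconnectedness of \(G_k\cap G_{k_0}\cap F\), which in turn is equivalent to disconnectedness of \(G_k\cap F\) because \(G_k\) and \(G_{k_0}\) belong to different factors of \(F\).

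The delicate point to track is the propagation of the case at \(\Gamma_{j_0}\) to the case at the reduced factor \(\Delta^{n_{j_0}-1}\) of \(F\cap G_{k_0}\); one must verify that the remaining \(n_{j_0}\) facets still meet in a configuration of type~\ref{item:8} or~\ref{item:3} rather than producing a disconnected intersection. This is automatic in the simplicial setting since the pairwise-intersecting property of the facets of \(\Delta^{n_{j_0}}\) is inherited by any subset, and since case~\ref{item:8} at \(\Gamma_{j_0}\) already supplies a component of \(\bigcap_{k;\, j(k)=j_0}G_k\) lying on \(G_{k_0}\).
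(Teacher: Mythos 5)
Your overall strategy is indeed the paper's: restrict to the facet \(F\cap G_{k_0}\cong\Delta^{n_{j_0}-1}\times\prod_{i\neq j_0}\Gamma_i\), determine which case of the classification propagates to the factor \(\tilde{\Gamma}_{j_0}=\Delta^{n_{j_0}-1}\), and apply the inductive hypothesis. However, there is a concrete error in the sub-case \(n_{j_0}=2\). You claim that case \ref{item:8} at \(\Gamma_{j_0}\) propagates to case \ref{item:3} at the interval factor \(\tilde{\Gamma}_{j_0}=\Delta^{1}\) of \(F\cap G_{k_0}\). It does not: case \ref{item:3} would mean that a \emph{single} facet accounts for both endpoints of that interval factor, and via Lemma~\ref{sec:rati-ellipt-torus} it would force \(G_{k_0}\cong\Sigma^2\times\prod_{i\neq j_0}\Gamma_i\), contradicting the conclusion you are trying to prove, since \(\Gamma_{j_0}=\Delta^2\) and \(\mathcal{P}(\Sigma^2)\not\cong\mathcal{P}(\Delta^2)\). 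What actually occurs is case \ref{item:4}: besides \(G_{k_0}\) there are two further facets \(G_{k_1},G_{k_2}\) of \(Q\) belonging to \(\Gamma_{j_0}=\Delta^2\); each meets \(F\cap G_{k_0}\) in a nonempty union of facets of \(\tilde{\Gamma}_{j_0}\), and since \(\tilde{\Gamma}_{j_0}\) has only two facets and no codimension-two face of \(Q\) lies in three facets, each of \(G_{k_1},G_{k_2}\) contributes exactly one; these two facets of \(G_{k_0}\) intersect because \(\bigcap_{k;\,j(k)=j_0}G_k\neq\emptyset\). The lemma to invoke in this sub-case is therefore Lemma~\ref{sec:rati-ellipt-torus-1} (which yields \(\Delta^2\times\prod_{i\neq j_0}\Gamma_i\)), not Lemma~\ref{sec:rati-ellipt-torus}.

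A second, related point: to conclude that case \ref{item:8} appears at \(\tilde{\Gamma}_{j_0}\) for \(n_{j_0}>2\), you need the common intersection of the facets of \(G_{k_0}\) belonging to that factor---that is, of the \emph{components} of the \(G_k\cap G_{k_0}\)---to be nonempty, whereas the hypothesis only gives \(\bigcap_{k;\,j(k)=j_0}(G_k\cap G_{k_0})=\bigcap_{k;\,j(k)=j_0}G_k\neq\emptyset\). The paper therefore first establishes that \(G_k\cap G_{k_0}\) is connected for \(j(k)=j_0\) (for \(n_{j_0}>2\) because the facets of \(\tilde{\Gamma}_{j_0}\) pairwise intersect, for \(n_{j_0}=2\) by the counting argument above) before selecting the combinatorial type from the list supplied by the induction. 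Your write-up asserts the propagation directly; you should insert this connectivity step. Once the product structure is in hand, your arguments for claims (1) and (2) match the paper's and are fine.
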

  \begin{proof}
 We consider the inclusions of \(F\cap G_{k_0}\hookrightarrow
  G_{k_0}\) where \(G_{k_0}\) is a facet which belongs to \(\Gamma_{j_0}\).
  Then \(F\cap G_{k_0}\) is a facet of \(G_{k_0}\) and there is a combinatorial equivalence
  \begin{equation*}
    F\cap G_{k_0}\cong \Delta^{n_{j_0}-1} \times \prod_{i\neq j_0} \Gamma_i
  \end{equation*}
  such that each component of \(G_{k_0}\cap G_{k}\cap F\) corresponds to a facet of the \(j(k)\)-th factor.
    
  The facets of \(G_{k_0}\) which meet \(F\cap G_{k_0}\) are given by the components \(C_{ki}\) of \(G_{k}\cap G_{k_0}\) which meet \(F\).  
  The induction hypothesis then implies that \(G_{k_0}\) is combinatorially equivalent to one of the following spaces
  \begin{enumerate}
  \item\label{item:10} \(\Delta^{n_{j_0}}\times \prod_{i\neq j_0} \Gamma_i\) or
  \item\label{item:11} \(\bar{\Gamma}_{j_1}\times\tilde{\Gamma}_{j_0}\times \prod_{i\neq j_1,j_0} \Gamma_i\) or
  \item \([0,1]\times\tilde{\Gamma}_{j_0}\times\prod_{i\neq j_0} \Gamma_i\)
  \end{enumerate}
  in such a way that the \(C_{ki}\) with \(j(k)=j_0\) correspond to facets of the \(j_0\)-th factor.
  In particular, \(G_{k}\cap G_{k_0}=C_{k1}\) is connected for all
  \(k\) with \(j(k)=j_0\).

  Indeed, if \(n_{j_0}>2\), then all facets of
  \(\tilde{\Gamma}_{j_0}\) and \(\Gamma_{j_0}\) have pairwise
  non-trivial intersections. Hence, \(G_{k}\cap G_{k_0}\) has only one
  component in this case.

  If \(n_{j_0}=2\), then \(\tilde{\Gamma}_{j_0}\) has two facets.
  Moreover, the intersection of \(G_{k_0}\) with another facet of
  \(Q\) which belongs to the factor \(\Gamma_{j_0}\) of \(F\) is a non-empty union of
  facets of \(\tilde{\Gamma}_{j_0}\).
  Since besides \(G_{k_0}\) there are two other facets of \(Q\) which
  belong to \(\Gamma_{j_0}\), these intersections must be connected.

  Since \(\bigcap_{k;\;j(k)=j_0} (G_{k}\cap G_{k_0})=\bigcap_{k;\;j(k)=j_0} G_{k}\neq \emptyset\), it follows from the induction hypothesis that we are in case \ref{item:10}.
  In this case an isomorphism of \(\mathcal{P}(G_{k_0})\rightarrow \mathcal{P}(\prod_{i}\Gamma_i)\) is induced by
  \begin{align*}
    F&\mapsto H_{k_0}& C_{ki}&\mapsto H_{ki},
  \end{align*}
  where \(H_{k_0}\) is a facet of the \(j_0\)-th factor
  in the product and the other \(H_{ki}\) are facets of the
  \(j(k)\)-th factor in the product.

  Since all facets of \(\prod_i\Gamma_i\) meet the facet which
  corresponds to \(F\cap G_{k_0}\), it follows that all facets of
  \(G_{k_0}\) are components of intersections \(G_k\cap G_{k_0}\).

  Moreover, \(G_{k}\cap G_{k_0}=C_{k1}\) is connected for all
  \(k\) with \(n_{j(k)}>1\) because all facets of these
  \(\Gamma_{j(k)}\) have pairwise non-trivial intersection.
  If \(n_{j(k)}=1\) and \(j(k)\neq j_0\), then \(G_{k}\cap G_{k_0}\)
  is disconnected if and only if \(G_{k}\cap F\) is disconnected.
  This last statement can be seen as in the proof of Sublemma \ref{sec:rati-ellipt-torus-5}.
 \end{proof}
 
 \begin{sublemma}
\label{sec:rati-ellipt-torus-7}
    Assume that the case \ref{item:3} appears at the factor
    \(\Gamma_{j_0}\) of \(F\).
    Let \(G_{k_0}\) be the facet of \(Q\) which belongs to
    \(\Gamma_{j_0}\).
    Then the following holds:
    \begin{enumerate}
    \item The facets of \(G_{k_0}\) are given by the components of the
      intersections \(G_{k}\cap G_{k_0}\) and \(F\cap G_{k_0}\).
    \item \(G_{k}\cap G_{k_0}\) is connected if and only if \(F\cap
      G_k\) is connected.
    \item There is a combinatorial equivalence
      \begin{equation*}
        \mathcal{P}(G_{k_0})\rightarrow \mathcal{P}(\prod_i \Gamma_i),
      \end{equation*}
such that \(F\cap G_{k_0}\) corresponds to a facet of the \(j_0\)-th
factor and the components of \(G_k\cap G_{k_0}\) correspond to facets
of the \(j(k)\)-th factor.
    \end{enumerate}
  \end{sublemma}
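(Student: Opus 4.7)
The plan is to adapt the arguments of Sublemmas \ref{sec:rati-ellipt-torus-5} and \ref{sec:rati-ellipt-torus-4} to the present situation, in which $F \cap G_{k_0}$ is disconnected. Since $\Gamma_{j_0}$ is combinatorially an interval with two endpoint facets and $G_{k_0}$ is the unique facet of $Q$ belonging to $\Gamma_{j_0}$, the intersection decomposes as $F \cap G_{k_0} = F_1' \sqcup F_2'$, where each $F_s'$ is a facet of $G_{k_0}$ that is face-poset-isomorphic to $\prod_{i \neq j_0} \Gamma_i$, with each component of $G_k \cap F_s'$ corresponding to a facet of $F_s'$ belonging to the $j(k)$-th factor.

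Next I would apply the induction hypothesis to $G_{k_0}$, which has dimension $n-1$, taking $F_1'$ as the base facet. The key claim is that only the generic cases \ref{item:5}, \ref{item:7}, \ref{item:9} can occur at any factor of $F_1'$. Suppose for contradiction that one of the exceptional cases \ref{item:3}, \ref{item:4}, \ref{item:6}, \ref{item:8} occurred at some factor $\Gamma_{i_1}$ of $F_1'$. Then the inductive versions of Lemmas \ref{sec:rati-ellipt-torus}, \ref{sec:rati-ellipt-torus-1}, or \ref{sec:torus-manifolds-with-3} would give a face-poset equivalence of $\mathcal{P}(G_{k_0})$ with one of $\mathcal{P}(\Sigma^2 \times \prod_{i \neq i_1, j_0} \Gamma_i)$, $\mathcal{P}(\Delta^2 \times \prod_{i \neq i_1, j_0} \Gamma_i)$, or $\mathcal{P}(\bar{\Gamma}_{i_1} \times \prod_{i \neq i_1, j_0} \Gamma_i)$, with $F_1'$ corresponding to a facet of the distinguished first factor. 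In each of these product descriptions the first factor is combinatorially $\Sigma^m$ or $\Delta^m$ with $m \geq 2$, so its facets pairwise meet; consequently every facet of $G_{k_0}$ meets $F_1'$, contradicting the fact that $F_2'$ is a facet of $G_{k_0}$ disjoint from $F_1'$.

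Once the exceptional cases are excluded, the inductive version of Lemma \ref{sec:torus-manifolds-with-4} provides a face-poset isomorphism $\mathcal{P}(G_{k_0}) \to \mathcal{P}(F_1' \times [0,1])$ sending $F_1'$ to $F_1' \times \{0\}$. Since $F_1' \times \{1\}$ is the unique facet disjoint from $F_1' \times \{0\}$, this forces $F_2' \mapsto F_1' \times \{1\}$, and each component of $G_k \cap G_{k_0}$ maps to a connected facet of the form $(\text{facet of } F_1') \times [0,1]$. Identifying $\Gamma_{j_0}$ combinatorially with $[0,1]$ and using $F_1' \cong \prod_{i \neq j_0} \Gamma_i$, one obtains the desired equivalence $\mathcal{P}(G_{k_0}) \to \mathcal{P}(\prod_i \Gamma_i)$, yielding statements (1) and (3). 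Statement (2) follows because the number of components of $G_k \cap G_{k_0}$ equals the number of components of $G_k \cap F_1'$, which in turn equals the number of components of $G_k \cap F$. The main obstacle in this argument is excluding the exceptional cases at factors of $F_1'$; the crucial leverage is that the disjointness of $F_1'$ and $F_2'$ forces the inductive conclusion to come from Lemma \ref{sec:torus-manifolds-with-4}, which is the only one producing a product structure with disjoint facets of the required codimension.
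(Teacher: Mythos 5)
Your proof is correct and follows essentially the same route as the paper: the paper also takes one component $C$ of $F\cap G_{k_0}$ as the base facet, invokes the induction hypothesis to get the trichotomy of possible product structures on $G_{k_0}$, and rules out the exceptional ones precisely because the other component of $F\cap G_{k_0}$ is a facet of $G_{k_0}$ disjoint from $C$, forcing the $[0,1]\times\prod_{i\neq j_0}\Gamma_i$ structure. The derivation of statements (1)--(3) from that product structure matches the paper's as well.
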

  \begin{proof}
  At first we describe the combinatorial type of \(G_{k_0}\) where
  \(G_{k_0}\) is the facet of \(Q\) which belongs to \(\Gamma_{j_0}\).

  We consider the inclusion of a component \(C\) of \(F\cap G_{k_0}\) in \(G_{k_0}\).
  Then \(C\) is a facet of \(G_{k_0}\) and there is a combinatorial equivalence
  \begin{equation*}
    C\cong \prod_{i\neq j_0} \Gamma_i
  \end{equation*}
  such that each component of \(G_{k_0}\cap G_{k}\cap C\) corresponds to a facet of the \(j(k)\)-th factor.

  Then the facets of \(G_{k_0}\) which meet \(C\) are given by the components \(C_{ki}\) of \(G_{k}\cap G_{k_0}\) which meet \(C\).
  It follows from the induction hypothesis that \(G_{k_0}\) is combinatorially equivalent to one of the following spaces:
  \begin{enumerate}
  \item\label{item:12} \([0,1]\times \prod_{i\neq j_0} \Gamma_i\) or
  \item \(\Sigma^{n_{j_1}+1}\times\prod_{i\neq j_0,j_1} \Gamma_i\) or
  \item \(\Delta^{n_{j_1}+1}\times\prod_{i\neq j_0,j_1} \Gamma_i\),
  \end{enumerate}
  such that each \(C_{ki}\) corresponds to a facet of the \(j(k)\)-th
  factor and \(C\) corresponds to a facet of the first factor.
  By the condition \ref{item:3}, there is a facet of \(G_{k_0}\) which
  does not meet \(C\).
  This facet is the other component of the intersection \(F\cap G_{k_0}\).
    Hence, it follows that we are in case \ref{item:12}.
  
  In this case an isomorphism of \(\mathcal{P}(G_{k_0})\rightarrow \mathcal{P}([0,1]\times \prod_{i\neq j_0} \Gamma_i)\) is induced by
  \begin{align*}
    C&\mapsto H_{k_2}& C'&\mapsto H_{k_2}' &   C_{ki}&\mapsto H_{ki} \text{ for }j(k)\neq j_0,
  \end{align*}
  where \(H_{ki}\) is a facet of the \(j(k)\)-th factor in the
  product, \(H_{k_2}\) and \(H_{k_2}'\) are the facets of the
  \(j_0\)-th factor \([0,1]\) of the product and \(C'\) is the other
  component of \(F\cap G_{k_0}\).
 
  Since all facets of \(\prod_i \Gamma_i\) except the facet
  corresponding to \(C'\) meet the facet corresponding
  to \(C\), it follows that all facets of \(G_{k_0}\) are components
  of intersections \(G_{k}\cap G_{k_0}\).

  As in the proof of Sublemma~\ref{sec:rati-ellipt-torus-5} one sees, moreover, that \(G_{k}\cap G_{k_0}=C_{k}\) is connected for all \(k\) with \(n_{j(k)}>1\).
  If \(n_{j(k)}=1\) and \(j(k)\neq j_0\), then \(G_{k}\cap G_{k_0}\)
  is disconnected if and only if \(G_{k}\cap F\) is disconnected.
  \end{proof}
 
 \begin{sublemma}
\label{sec:rati-ellipt-torus-6}
    Assume that the case \ref{item:4} appears at the factor
    \(\Gamma_{j_0}\) of \(F\).
    Let \(G_{k_0}\) be a facet of \(Q\) which belongs to
    \(\Gamma_{j_0}\).
    Then the following holds:
    \begin{enumerate}
    \item The facets of \(G_{k_0}\) are given by the components of the
      intersections \(G_{k}\cap G_{k_0}\) and \(F\cap G_{k_0}\).
    \item \(G_{k}\cap G_{k_0}\) is connected if and only if \(F\cap
      G_k\) is connected.
    \item There is a combinatorial equivalence
      \begin{equation*}
        \mathcal{P}(G_{k_0})\rightarrow \mathcal{P}(\prod_i \Gamma_i),
      \end{equation*}
such that \(F\cap G_{k_0}\) corresponds to a facet of the \(j_0\)-th
factor and the components of \(G_k\cap G_{k_0}\) correspond to facets
of the \(j(k)\)-th factor.
    \end{enumerate}
  \end{sublemma}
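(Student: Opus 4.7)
The plan is to apply the induction hypothesis (the four lemmas at dimension $n-1$) to $G_{k_0}$ with $F\cap G_{k_0}$ as the special facet, and use the hypothesis $G_{k_0}\cap G_{k_0'}\neq\emptyset$ of case \ref{item:4} to pin down the combinatorial type. First, since $\dim\Gamma_{j_0}=1$, the facet $G_{k_0}$ intersects $F$ in $\tilde{F}\times\prod_{i\neq j_0}\Gamma_i$, where $\tilde{F}$ is one of the two (disjoint) endpoint facets of $\Gamma_{j_0}$. Thus $F\cap G_{k_0}$ is a single facet of $G_{k_0}$ combinatorially equivalent to $\prod_{i\neq j_0}\Gamma_i$, with each component of $G_k\cap G_{k_0}\cap F$ (for $k\notin\{k_0,k_0'\}$) corresponding to a facet of the $j(k)$-th factor. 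The symmetric statement holds for $G_{k_0'}$, and since the two endpoint facets of $\Gamma_{j_0}$ are disjoint, we have $G_{k_0}\cap G_{k_0'}\cap F=\emptyset$. Consequently the non-empty intersection $G_{k_0'}\cap G_{k_0}$ is a union of facets of $G_{k_0}$ that is entirely disjoint from the facet $F\cap G_{k_0}$.

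Now apply Lemmas \ref{sec:torus-manifolds-with-3}, \ref{sec:rati-ellipt-torus}, \ref{sec:rati-ellipt-torus-1}, \ref{sec:torus-manifolds-with-4} to $G_{k_0}$, which has dimension $n-1$, with $F\cap G_{k_0}$ in the role of the special facet. The combinatorial models supplied by Lemmas \ref{sec:torus-manifolds-with-3}, \ref{sec:rati-ellipt-torus}, and \ref{sec:rati-ellipt-torus-1} are respectively of the forms $\bar{\Gamma}_{j_1}\times\prod_{i\neq j_1,j_0}\Gamma_i$, $\Sigma^2\times\prod_{i\neq j_1,j_0}\Gamma_i$, and $\Delta^2\times\prod_{i\neq j_1,j_0}\Gamma_i$; in each of these every facet meets the special facet, because the factors $\bar{\Gamma}_{j_1}$, $\Sigma^2$, and $\Delta^2$ (all of dimension $\geq 2$) have the property that any two of their facets intersect non-trivially. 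This contradicts the presence of $G_{k_0'}\cap G_{k_0}$. Hence Lemma \ref{sec:torus-manifolds-with-4} must apply, yielding a face-poset isomorphism
\[
\mathcal{P}(G_{k_0})\;\longrightarrow\;\mathcal{P}\bigl((F\cap G_{k_0})\times[0,1]\bigr)\;\cong\;\mathcal{P}\Bigl([0,1]\times\prod_{i\neq j_0}\Gamma_i\Bigr),
\]
sending $F\cap G_{k_0}$ to $\{0\}\times\prod_{i\neq j_0}\Gamma_i$ and each component of $G_k\cap G_{k_0}$ (for $k\notin\{k_0,k_0'\}$) to $[0,1]\times(\text{facet of the }j(k)\text{-th factor})$.

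Identifying the extra $[0,1]$ factor with the interval $\Gamma_{j_0}$ then gives the combinatorial equivalence $\mathcal{P}(G_{k_0})\to\mathcal{P}(\prod_i\Gamma_i)$ required by (3), with $F\cap G_{k_0}$ corresponding to a facet of the $j_0$-th factor. The unique facet of the model disjoint from $\{0\}\times\prod_{i\neq j_0}\Gamma_i$ is $\{1\}\times\prod_{i\neq j_0}\Gamma_i$, and this must coincide with $G_{k_0'}\cap G_{k_0}$: any facet of $G_{k_0}$ contained in $G_{k_0'}$ and meeting $F\cap G_{k_0}$ would produce a point in $G_{k_0}\cap G_{k_0'}\cap F$, which we ruled out. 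This simultaneously gives claim (1) (every facet of $G_{k_0}$ is $F\cap G_{k_0}$, $G_{k_0'}\cap G_{k_0}$, or a component of some $G_k\cap G_{k_0}$) and the connectedness of $G_{k_0'}\cap G_{k_0}$, matching that of $G_{k_0'}\cap F$. Claim (2) then follows exactly as in Sublemma \ref{sec:rati-ellipt-torus-5}: for $n_{j(k)}>1$ both $G_k\cap G_{k_0}$ and $G_k\cap F$ are automatically connected, while for $n_{j(k)}=1$ with $j(k)\neq j_0$ the components of $G_k\cap G_{k_0}$ are facets of the $j(k)$-th factor of $G_{k_0}$ distinct from those arising from $F\cap G_{k_0}$, so connectedness can be read off from the intersection with $F$. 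The main obstacle, which is handled by the disjointness argument above, is the identification of $G_{k_0'}\cap G_{k_0}$ with the distinguished facet $\{1\}\times\prod_{i\neq j_0}\Gamma_i$; without this step one could not rule out $G_{k_0'}\cap G_{k_0}$ absorbing more of the facets of $G_{k_0}$ and spoiling the combinatorial identification.
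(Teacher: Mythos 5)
Your proposal is correct and follows essentially the same route as the paper: the paper likewise uses $G_{k_0}\cap G_{k_0'}\neq\emptyset$ together with $G_{k_0}\cap G_{k_0'}\cap F=\emptyset$ to produce a facet of $G_{k_0}$ disjoint from $F\cap G_{k_0}$, rules out the models in which the distinguished facet lies in a factor of dimension at least two (by reference to the argument of Sublemma~\ref{sec:rati-ellipt-torus-7}), and concludes $G_{k_0}\cong[0,1]\times\prod_{i\neq j_0}\Gamma_i$, with statements (1) and (2) obtained as in Sublemma~\ref{sec:rati-ellipt-torus-5}. You have merely written out in detail the steps the paper delegates to those two sublemmas.
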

  \begin{proof}
   Let \(G_{k_0}\) and \(G_{k_0'}\) be the two facets of \(Q\), which belong to \(\Gamma_{j_0}\).

Then the
intersection \(G_{k_0}\cap G_{k_0'}\) is
non-empty.
Therefore \(G_{k_0}\) has a facet which is not equal to \(F\cap
G_{k_0}\) or a component of \(G_{k}\cap G_{k_0}\) with \(j(k)\neq j_0\).
Therefore, as in the proof of Sublemma \ref{sec:rati-ellipt-torus-7},
one sees that \(G_{k_1}\) is combinatorially equivalent to
\([0,1]\times \prod_{i\neq j_0} \Gamma_i\).

The other statements of the Sublemma can be seen as in the proof of Sublemma~\ref{sec:rati-ellipt-torus-5}.
  \end{proof}

  Next we show that, if there is a factor \(\Gamma_{j_0}\) of \(F\)
  where one of the cases \ref{item:3}, \ref{item:4}, \ref{item:6} or
  \ref{item:8} occurs, then at the other factors of \(F\) only the cases
  \ref{item:5},~\ref{item:7} or~\ref{item:9} can occur.

\begin{sublemma}
\label{sec:rati-ellipt-torus-11}
  There is at most one factor \(\Gamma_{j_0}\) of \(F\), where one of the cases
  \ref{item:3}, \ref{item:4}, \ref{item:6} or \ref{item:8} appears.
\end{sublemma}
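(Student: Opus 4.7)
The plan is to argue by contradiction. I will suppose that two distinct factors $\Gamma_{j_0}$ and $\Gamma_{j_1}$ of $F$ both satisfy one of the cases \ref{item:3}, \ref{item:4}, \ref{item:6}, or \ref{item:8}, and combine an inductive descent on $\dim Q$ with Lemma~\ref{sec:rati-ellipt-torus-3}. The descent will handle the generic combinations, while Lemma~\ref{sec:rati-ellipt-torus-3} is needed for a few degenerate sub-cases.

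For the descent, I will pick a facet $G_{k_0}$ of $Q$ belonging to $\Gamma_{j_0}$ and apply whichever of Sublemmas~\ref{sec:rati-ellipt-torus-5}, \ref{sec:rati-ellipt-torus-4}, \ref{sec:rati-ellipt-torus-7}, or \ref{sec:rati-ellipt-torus-6} matches the case at $\Gamma_{j_0}$. This provides a combinatorial equivalence $G_{k_0} \cong \prod_i \Gamma_i$ under which $F\cap G_{k_0}$ is a facet of the $j_0$-th factor, each component of $G_k\cap G_{k_0}$ is a facet of the $j(k)$-th factor, and $G_k\cap G_{k_0}$ is connected iff $G_k\cap F$ is. Viewing $G_{k_0}$ as a lower-dimensional nice manifold with corners with distinguished facet $F\cap G_{k_0}$, I will verify that its $\Gamma_{j_1}$-factor remains exceptional: the facets of $G_{k_0}$ belonging to it correspond bijectively with the facets of $F$ belonging to $\Gamma_{j_1}$, and by the connectedness-preservation statement the exceptional case at $\Gamma_{j_1}$ transfers verbatim. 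When case~\ref{item:6} or \ref{item:8} holds at $\Gamma_{j_0}$, I will check that the residual factor $\tilde\Gamma_{j_0}$ of $F\cap G_{k_0}$ also carries an exceptional case (the same one when $n_{j_0}>2$, and case~\ref{item:3} or \ref{item:4} in the boundary situation $n_{j_0}=2$), because the remaining facets of $Q$ belonging to $\Gamma_{j_0}$ restrict to $G_{k_0}$ as facets of $\tilde\Gamma_{j_0}$ with the same intersection pattern. Two exceptional factors inside $G_{k_0}$ will then contradict the present sublemma in dimension $n-1$.

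The descent degenerates precisely when case~\ref{item:3} or \ref{item:4} already holds at $\Gamma_{j_0}$, for then $\tilde\Gamma_{j_0}$ in $F\cap G_{k_0}$ is zero-dimensional and no second exceptional factor is produced. For these residual sub-cases I will fix a vertex $V\in F\cap G_{k_0}\cap G_{k_1}$, where $G_{k_1}$ is a facet of $Q$ belonging to $\Gamma_{j_1}$ and containing $V$, and form $H=\bigcap_{l} G_l$, the intersection of all $n-3$ facets of $Q$ through $V$ that belong to factors other than $j_0, j_1$. Then $H$ is a three-dimensional face of $Q$ whose facets are $2$-cubes, and a case-by-case verification using the exceptional hypotheses at both $\Gamma_{j_0}$ and $\Gamma_{j_1}$ shows that every pair of facets of $H$ meets in exactly two components. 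This contradicts Lemma~\ref{sec:rati-ellipt-torus-3}.

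The main obstacle will be the residual branch: I need a careful combinatorial analysis to identify exactly which facets of $Q$ contribute to $H$ and to confirm the two-component pairwise intersection condition in each sub-case. In particular, when case~\ref{item:4} occurs at $\Gamma_{j_1}$ the second facet $G_{k_1'}$ belonging to $\Gamma_{j_1}$ may introduce a fourth facet of $H$ whose intersections with the others do not satisfy the two-component condition; handling this either requires restricting $H$ by an additional facet to eliminate $G_{k_1'}\cap H$, or constructing a slightly different auxiliary three-manifold with corners to which Lemma~\ref{sec:rati-ellipt-torus-3} still applies. The inductive branch is conceptually cleaner but also requires tracking the boundary situation $n_{j_0}=2$ separately in order to match cases~\ref{item:6}, \ref{item:8} with their lower-dimensional analogues~\ref{item:3}, \ref{item:4}.
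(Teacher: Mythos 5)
Your descent branch is a legitimate alternative to the paper's argument for the combinations involving a factor of dimension at least two, but as written it is under-justified at one point: the claim that the exceptional case at \(\Gamma_{j_1}\) ``transfers verbatim'' to the corresponding factor of \(F\cap G_{k_0}\) does not follow from the statement that \(G_k\cap G_{k_0}\) is connected iff \(G_k\cap F\) is. Cases \ref{item:6} and \ref{item:8} are distinguished from \ref{item:7} and \ref{item:9} by the behaviour of the \emph{total} intersection \(\bigcap_{k:j(k)=j_1}G_k\) relative to \(F\), and to see that, e.g., \(\bigcap_{k:j(k)=j_1}(G_k\cap G_{k_0})\neq\emptyset\) you must play the combinatorial description of a facet \(G_{k_1}\) belonging to \(\Gamma_{j_1}\) against that of \(G_{k_0}\) --- which is exactly the two-perspective computation the paper uses to get its contradiction directly (it computes \(\bigcap_{k:j(k)=j_0}G_k\cap G_{k_1}\) once inside \(G_{k_0}\), where it is non-empty resp.\ connected, and once inside \(G_{k_1}\), where it is empty resp.\ disconnected). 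So the descent can be made to work, but only after doing essentially the same computation that already yields the contradiction without any appeal to the inductive instance of this sublemma.

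The genuine gap is in your residual branch. Lemma~\ref{sec:rati-ellipt-torus-3} requires \emph{every} pair of facets of the auxiliary manifold to meet in two components, and this holds only in the \ref{item:3}-versus-\ref{item:3} configuration (which is precisely where the paper invokes that lemma). If case \ref{item:4} occurs at \(\Gamma_{j_1}\), then \(F\cap G_{k_1}\) and \(F\cap G_{k_1'}\) are single facets of \(F\), so \(F\cap H\) meets \(G_{k_1}\cap H\) and \(G_{k_1'}\cap H\) in one edge each, and no choice of three-dimensional face produces the all-pairs-two-components pattern; ``restricting \(H\) by an additional facet'' cannot delete the facet \(G_{k_1'}\cap H\) from a nice manifold with corners. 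The contradictions for \ref{item:3}-vs-\ref{item:4} and \ref{item:4}-vs-\ref{item:4} are of a different kind and must be argued separately: by Sublemma~\ref{sec:rati-ellipt-torus-7} the intersection \(G_{k_0}\cap G_{k_1}\) is connected (facets of distinct factors of \(G_{k_0}\cong[0,1]\times\prod_{i\neq j_0}\Gamma_i\)), while Sublemma~\ref{sec:rati-ellipt-torus-6} forces it to be disconnected since \(F\cap G_{k_0}\) is disconnected; and in the \ref{item:4}-vs-\ref{item:4} case \(G_{k_0}\cap G_{k_0'}\cap G_{k_1}\) is non-empty when computed inside \(G_{k_0}\) but empty when computed inside \(G_{k_1}\), where \(G_{k_0}\cap G_{k_1}\) and \(G_{k_0'}\cap G_{k_1}\) are the two disjoint ends of the interval factor. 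Without these (or equivalent) arguments your proof does not cover all combinations of exceptional cases.
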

\begin{proof}
  Assume that one of the cases \ref{item:6} and \ref{item:8} occurs at the factor \(\Gamma_{j_0}\) of \(F\) and one of the cases  \ref{item:3}, \ref{item:4}, \ref{item:6} and \ref{item:8} appears at another factor \(\Gamma_{j_1}\) of \(F\).
  Then we consider the intersection \(G_{k_0}\cap G_{k_1}\) with
  \(j(k_0)=j_0\) and \(j(k_1)=j_1\).
  It follows from the description of the combinatorial type of
  \(G_{k_0}\) given in the Sublemmas~\ref{sec:rati-ellipt-torus-5} and
   \ref{sec:rati-ellipt-torus-4}, that there is an isomorphism of posets
  \begin{equation*}
    \mathcal{P}(C)\rightarrow\mathcal{P}(\Gamma_{j_0}\times \tilde{\Gamma}_{j_1}\times \prod_{i\neq j_0,j_1} \Gamma_i),
  \end{equation*}
  where \(C\) is a component of \(G_{k_1}\cap G_{k_0}\) such that \(F\cap G_{k_0}\cap G_{k_1}\) and the \(G_{k}\cap G_{k_0}\cap G_{k_1}\), \(j(k)=j_0\), correspond to the facets belonging to the factor \(\Gamma_{j_0}\).
  Hence, it follows that the intersection of \(G_{k_0}\cap G_{k_1}\) with \(\bigcap_{k;\;j(k)=j_0} G_{k}\) is non-empty (or connected) if case \ref{item:8} (or \ref{item:6}, respectively) appears at the factor \(\Gamma_{j_0}\) of \(F\).

  From the description of the combinatorial type of \(G_{k_1}\) given
  in Sublemmas~\ref{sec:rati-ellipt-torus-5},
  \ref{sec:rati-ellipt-torus-4}, \ref{sec:rati-ellipt-torus-7} and \ref{sec:rati-ellipt-torus-6} it follows that there is an isomorphism of posets
  \begin{equation*}
    \mathcal{P}(G_{k_0}\cap G_{k_1})\rightarrow\mathcal{P}(\tilde{\Gamma}_{j_0}\times \Gamma_{j_1}\times \prod_{i\neq j_0,j_1} \Gamma_i),
  \end{equation*}
  such that the  \(G_{k}\cap G_{k_0}\cap G_{k_1}\), \(j(k)=j_0\), correspond to the facets belonging to the factor \(\tilde{\Gamma}_{j_0}\).
  Hence, it follows that the intersection of  \(G_{k_0}\cap G_{k_1}\) with \(\bigcap_{k;\;j(k)=j_0} G_{kj_0}\) is empty (or non-connected) if case \ref{item:8} (or \ref{item:6}, respectively) appears at the factor \(\tilde{\Gamma}_{j_0}\) of \(F\).
  Therefore we have a contradiction.

  Next assume that at one factor \(\Gamma_{j_0}\) of \(F\) the case \ref{item:3} appears and at
another factor \(\Gamma_{j_1}\) the case \ref{item:4} appears.
Then it follows from the description of the combinatorial type of
\(G_{k_0}\) given in Sublemma~\ref{sec:rati-ellipt-torus-7} that the intersection \(G_{k_0}\cap G_{k_1}\) is
connected.
Here \(G_{k_0}\) is the facet belonging to the factor \(\Gamma_{j_0}\)
and \(G_{k_1}\) is a facet belonging to the factor \(\Gamma_{j_1}\).
But the description of the combinatorial type of \(G_{k_1}\)
given in Sublemma~\ref{sec:rati-ellipt-torus-6} implies that this intersection is disconnected.

Next assume that the case \ref{item:4} occurs at two factors
\(\Gamma_{j_0}\) and \(\Gamma_{j_1}\).
Let \(G_{k_0}\) and \(G_{k_0'}\) be the facets belonging to
\(\Gamma_{j_0}\).
Moreover, let \(G_{k_1}\) be a facet of \(Q\) belonging to \(\Gamma_{j_1}\).
Then it follows from the description of the combinatorial type of
\(G_{k_0}\) given in Sublemma \ref{sec:rati-ellipt-torus-6} that the intersection \(G_{k_0}\cap G_{k_0'}\cap
G_{k_1}\) is non-empty.
But from the description of the combinatorial type of \(G_{k_1}\) it
follows that this intersection is empty.

Last we show that the case \ref{item:3} occurs at
at most one factor of \(F\).
Assume that the case \ref{item:3} appears at the factors
\(\Gamma_{j}\) with \(j<s\) and not at the factors \(\Gamma_{j}\) with
\(j\geq s\).
Then for each \(j\geq s\) choose \(n_j\) facets \(G_{k_{j1}},\dots,G_{k_{jn_j}}\)   of \(Q\) belonging to
that factor.
Consider a component \(K\) of \(\bigcap_{j\geq s} \bigcap_{i=1}^{n_j} G_{k_{ji}
}\) which meets \(F\).
Since at none of the factors \(\Gamma_{j}\), \(j\geq s\), the cases
\ref{item:3} or \ref{item:6} appear, the intersection \(F\cap K\) is
connected.
Therefore also the intersections \(G_k\cap K\), \(j(k)<s\) are
connected by Sublemma~\ref{sec:rati-ellipt-torus-7}.

By the description of the combinatorial type of the \(G_k\) given in
Sublemma~\ref{sec:rati-ellipt-torus-7} the intersections \(G_k\cap K\)
are combinatorially equivalent to cubes
and have pairwise non-connected intersection.
Therefore it follows from Lemma \ref{sec:rati-ellipt-torus-3}, that
\(K\) has dimension two.
Hence it follows that \(s\) is two, i.e. there is only one factor of
\(F\) where the case \ref{item:3} appears.

  Therefore there is at most one factor \(\Gamma_{j_0}\) of \(F\) where one of the cases  \ref{item:3}, \ref{item:4}, \ref{item:6} and \ref{item:8} appears.
\end{proof}

  \begin{sublemma}
\label{sec:rati-ellipt-torus-8}
    Assume that one of the cases \ref{item:3}, \ref{item:4},
    \ref{item:6} or \ref{item:8} appears at the factor
    \(\Gamma_{j_0}\) of \(F\).
    Let \(G_{k_1}\) be a facet of \(Q\) which meets \(F\) and belongs
    to \(\Gamma_{j_1}\) with \(j_1\neq j_0\).
Then the following holds:
\begin{enumerate}
\item The facets of \(G_{k_1}\) are given by the components of the intersections \(G_k\cap G_{k_1}\).
\item \(G_{k_1}\cap G_k\) is disconnected if and only if
  \(\Gamma_{j_1}=\Sigma^2\) and \(j(k)=j(k_1)=j_1\). 
\item There is a combinatorial equivalence
  \begin{equation*}
    G_{k_1}\cong \bar{\Gamma}_{j_0}\times \tilde{\Gamma}_{j_1}\times
    \prod_{i\neq j_0,j_1} \Gamma_i,
  \end{equation*}
such that \(F\cap G_{k_1}\) corresponds to a facet of the \(j_0\)-th
factor and the \(G_k\cap G_{k_1}\) correspond to disjoint unions of
facets of the \(j(k)\)-th factor.
\end{enumerate}
  \end{sublemma}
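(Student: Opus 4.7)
The plan is to apply the induction hypothesis to the facet $G_{k_1}$, in the same way that Sublemmas \ref{sec:rati-ellipt-torus-5}--\ref{sec:rati-ellipt-torus-6} apply it to the facets $G_{k_0}$ belonging to $\Gamma_{j_0}$, but this time using $F\cap G_{k_1}$ as the ``distinguished'' facet of $G_{k_1}$ playing the role of $F$. Since $G_{k_1}$ belongs to $\Gamma_{j_1}$ with $j_1\neq j_0$, the intersection $F\cap G_{k_1}$ is a facet of $G_{k_1}$ combinatorially equivalent to
\[
\Gamma_{j_0}\times \tilde{\Gamma}_{j_1}\times \prod_{i\neq j_0,j_1}\Gamma_i,
\]
and the components of $G_k\cap G_{k_1}\cap F$ correspond to facets of the $j(k)$-th factor of this product.

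The key step is to verify that the exceptional case (\ref{item:3}, \ref{item:4}, \ref{item:6} or \ref{item:8}) appearing at $\Gamma_{j_0}$ in $F$ reappears \emph{at the same factor} $\Gamma_{j_0}$ in $F\cap G_{k_1}$. This is essentially a bookkeeping step: by Sublemmas \ref{sec:rati-ellipt-torus-5}--\ref{sec:rati-ellipt-torus-6} every facet $G_{k_0}$ of $Q$ belonging to $\Gamma_{j_0}$ meets $G_{k_1}$, and because $G_{k_1}$ belongs to a different factor of $F$, the pattern of components of $G_{k_0}\cap G_{k_1}$ meeting $F\cap G_{k_1}$ is identical to the pattern of components of $G_{k_0}$ meeting $F$. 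No other exceptional case can appear at a factor of $F\cap G_{k_1}$: at the factors indexed by $i\neq j_0,j_1$ one inherits the generic case already present in $F$, and at the $\tilde{\Gamma}_{j_1}$-factor Sublemma~\ref{sec:rati-ellipt-torus-11} (applied inside $G_{k_1}$) forbids a second exceptional case once $\Gamma_{j_0}$ is exceptional.

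With this established, the appropriate one of Lemmas \ref{sec:torus-manifolds-with-3}, \ref{sec:rati-ellipt-torus}, \ref{sec:rati-ellipt-torus-1} applies to $G_{k_1}$ (the choice depending on which of \ref{item:3}, \ref{item:4}, \ref{item:6}, \ref{item:8} occurs) and yields an isomorphism of face posets
\[
\mathcal{P}(G_{k_1})\longrightarrow \mathcal{P}\Bigl(\bar{\Gamma}_{j_0}\times \tilde{\Gamma}_{j_1}\times \prod_{i\neq j_0,j_1}\Gamma_i\Bigr)
\]
sending $F\cap G_{k_1}$ to a facet belonging to $\bar{\Gamma}_{j_0}$ and sending each $G_k\cap G_{k_1}$ to a disjoint union of facets belonging to the $j(k)$-th factor. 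This is exactly claim (iii). Claim (i) then follows because every facet of the product meets the distinguished $\bar{\Gamma}_{j_0}$-facet, hence arises as a component of some $G_k\cap G_{k_1}$. For claim (ii), one inspects the factors of the product: all factors other than $\tilde{\Gamma}_{j_1}$ have pairwise intersecting facets, so a disjoint-union decomposition with more than one piece can only occur at the $\tilde{\Gamma}_{j_1}$-factor, and only when $\tilde{\Gamma}_{j_1}=\Sigma^1$, i.e.\ when $\Gamma_{j_1}=\Sigma^2$ and $j(k)=j_1$.

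The main obstacle is the verification in the middle paragraph that the exceptional behavior at $\Gamma_{j_0}$ genuinely transfers to $F\cap G_{k_1}$ with no shift in type, since one must translate statements about ``components meeting $F$'' into statements about ``components meeting $F\cap G_{k_1}$''; this relies on the precise combinatorial descriptions in Sublemmas \ref{sec:rati-ellipt-torus-5}--\ref{sec:rati-ellipt-torus-6} and on Sublemma~\ref{sec:rati-ellipt-torus-11} to ensure the induction hypothesis can be invoked unambiguously.
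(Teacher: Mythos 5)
Your proposal follows the paper's proof essentially verbatim: realize \(F\cap G_{k_1}\) as a facet of \(G_{k_1}\), use the combinatorial descriptions of the \(G_{k_0}\) from Sublemmas \ref{sec:rati-ellipt-torus-5}--\ref{sec:rati-ellipt-torus-6} to check that the exceptional case at \(\Gamma_{j_0}\) transfers to the corresponding factor of \(F\cap G_{k_1}\), invoke the induction hypothesis to obtain the product structure, and read off claims (1)--(3). The one caveat is your justification of claim (2): a one-dimensional factor \(\Gamma_i\) with \(i\neq j_0,j_1\), and likewise \(\tilde{\Gamma}_{j_1}=\Delta^1\) when \(\Gamma_{j_1}=\Delta^2\), does \emph{not} have pairwise intersecting facets, so for those factors you must instead argue (as the paper does) that only the generic cases \ref{item:5} and \ref{item:9} occur there, whence each relevant \(G_k\) meets \(G_{k_1}\) in a single facet of that factor and the intersection is still connected.
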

  \begin{proof} 
  We consider the inclusion \(F\cap G_{k_1}\hookrightarrow G_{k_1}\).
  \(F\cap G_{k_1}\) is a facet of \(G_{k_1}\) and there is a
  combinatorial equivalence
  \begin{equation*}
    G_{k_1}\cap F\cong \tilde{\Gamma}_{j_1}\times\prod_{i\neq j_1} \Gamma_i
  \end{equation*}
  such that each component of \(G_{k}\cap G_{k_1}\cap F\) corresponds to a facet of the \(j(k)\)-th factor.
  Moreover, the facets of \(G_{k_1}\) which meet \(G_{k_1}\cap F\) are given by the components \(C_{ki}\) of \(G_{k}\cap G_{k_1}\) which meet \(G_{k_1}\cap F\).
 
 If \(j(k)=j_0\), then \(G_k\cap F\) and \(F\cap G_{k_1}\) are
 facets of different factors of 
 \(F\).
 Since \(G_k\cap G_{k_0}\) and \(G_{k_0}\cap
 G_{k_1}\) are facets of different factors of \(G_{k_0}\cong\prod_i\Gamma_i\) and because \(F\cap G_{k_0}\) is a
 facet of the \(j_0\)-th factor of \(G_{k_0}\), it follows from Sublemma~\ref{sec:rati-ellipt-torus-4} that
 \(\bigcap_{k;\;j(k)=j_0} (G_{k}\cap G_{k_1})\neq \emptyset\) if the case \ref{item:8} appears at the factor \(\Gamma_{j_0}\) of \(F\).

Furthermore, by the same argument \(\bigcap_{k;\;j(k)=j_0} (G_{k}\cap
G_{k_1})\neq \emptyset\) is connected if the case \ref{item:6} appears
at the factor \(\Gamma_{j_0}\) of \(F\). Here one uses Sublemma~\ref{sec:rati-ellipt-torus-5}.

If the case \ref{item:3} appears at the factor \(\Gamma_{j_0}\) one
can argue as follows.
  It follows from the description of the combinatorial type of
  \(G_{k_0}\), \(j(k_0)=j_0\), given in Sublemma~\ref{sec:rati-ellipt-torus-7} that 
  \(G_{k_0}\cap G_{k_1}\) is connected.
  Moreover, it follows from the same description that  \(G_{k_0}\cap
  G_{k_1}\cap F\)
  has two components.
 
  Therefore  the case \ref{item:3} also
  appears at the factor \(\Gamma_{j_0}\) of \(G_{k_1}\cap F\).

If the case \ref{item:4} appears at the factor \(\Gamma_{j_0}\), then
it follows from the combinatorial description of the \(G_k\) with
\(j(k)=j_0\) given in Sublemma \ref{sec:rati-ellipt-torus-6} that for these \(k\) the intersection of \(G_k\) with
\(G_{k_1}\) is connected.
Moreover, the intersection \(\bigcap_{k;\;j(k)=j_0} G_k\cap G_{k_1}\) is non-empty.
Therefore the case \ref{item:4} appears at the factor \(\Gamma_{j_0}\)
of \(G_{k_1}\cap F\).

Hence, the case which appears for the factor \(\Gamma_{j_0}\) of \(F\)
also appears at the factor \(\Gamma_{j_0}\) of \(G_{k_1}\cap F\).
  Therefore from the induction hypothesis we get an isomorphism of posets 
  \begin{equation*}
    \mathcal{P}(G_{k_1})\rightarrow \mathcal{P}(\bar{\Gamma}_{j_0}\times \tilde{\Gamma}_{j_1}\times \prod_{i\neq j_0,j_1} \Sigma^{n_{i}}\times \prod_{i\neq j_0j_1} \Delta^{n_i}),
  \end{equation*}
  such that \(G_{k_1}\cap F\) is mapped to a facet of the \(j_0\)-th
  factor and \(C_{ki}\) to a facet of the \(j(k)\)-th factor. Here
  \(C_{ki}\) is a compoenent of \(G_k\cap G_{k_1}\).

  Since all pairs of facets of \(\bar{\Gamma}_{j_0}\) have non-trivial
  intersection, all facets of \(G_{k_1}\) meet \(G_{k_1}\cap F\).
  Moreover, it follows that \(G_{k}\cap G_{k_1}\) is connected if \(j(k)\neq j_1\) or \(\Gamma_{j_1}\neq \Sigma^{2}\).
  Otherwise this intersection has two components.

  Indeed, if \(j(k_2)\neq j_1,j_0\), then it follows from the description
  of the combinatorial type of \(F\) that \(F\cap G_{k_1}\cap
  G_{k_2}=(F\cap G_{k_1})\cap (F\cap G_{k_2})\) is connected.
  Because \(G_{k_2}\) and \(F\) belong to different factors of
  \(G_{k_1}\) it follows that \(G_{k_2}\cap G_{k_1}\) is connected.

  Next assume that \(j(k_2)=j_1\) and \(\dim \Gamma_{j_1} \geq 3\).
  Then all pairs of facets of \(\tilde{\Gamma}_{j_1}\) have
  non-trivial intersections. Therefore \(G_{k_2}\cap G_{k_1}\) is
  connected in this case.

  Assume now that \(\Gamma_{j_1}=\Delta^2\).
  Then besides \(G_{k_1}\) there are two other facets of \(Q\) which
  belong to \(\Gamma_{j_1}\). These two facets have non-trivial
  intersections with \(G_{k_1}\).
  Moreover, the components of these intersections are facets of the
  factor \(\tilde{\Gamma}_{j_1}\) of \(G_{k_1}\).
  Since \(\tilde{\Gamma}_{j_1}\) has two facets, the intersections
  \(G_{k_2}\cap G_{k_1}\) with \(j(k_2)=j_1\) are connected.

  Next assume that \(\Gamma_{j_1}=\Sigma^2\).
  Then besides \(G_{k_1}\) there is exactly one other facet
  \(G_{k_2}\) of \(Q\) which belongs to \(\Gamma_{j_1}\).
  Moreover, \(F\cap G_{k_2}\cap G_{k_1}\) has two components.
  Since \(F\cap G_{k_1}\) and the components of \(G_{k_2}\cap
  G_{k_1}\) are facets of different factors of \(G_{k_1}\).
  It follows that \(G_{k_2}\cap G_{k_1}\) has two components.

  At last assume that \(\dim \Gamma_{j_1}=1\).
  Then since \(F\cap G_{k_1}\) is connected. There is another facet
  \(G_{k_2}\) of \(Q\) which belongs to \(\Gamma_{j_1}\).
  Since \(F\cap G_{k_1}\cap G_{k_2}\) is empty and all facets of
  \(G_{k_1}\) meet \(F\cap G_{k_1}\) it follows that \(G_{k_1}\cap
  G_{k_2}\) is empty.
  \end{proof}

Now we can prove the Lemmas \ref{sec:torus-manifolds-with-3},
\ref{sec:rati-ellipt-torus}, and \ref{sec:rati-ellipt-torus-1}. 

\begin{proof}[Proof of Lemmas \ref{sec:torus-manifolds-with-3},
\ref{sec:rati-ellipt-torus} and \ref{sec:rati-ellipt-torus-1}]
  For \(j\neq j_0\), let \(\tilde{n}_j=n_j\) and \(\tilde{n}_{j_0}=n_{j_0}+1\).
  Moreover, let \(G_{0}=F\) and \(j(0)=j_0\).
  Let \(P=\prod_{i<r} \Sigma^{\tilde{n}_i}\times \prod_{i\geq r}
  \Delta^{\tilde{n}_i}\).
  Denote by \(H_{k}\) the facets of \(P\).
  We have shown in Sublemmas~\ref{sec:rati-ellipt-torus-5},
  \ref{sec:rati-ellipt-torus-4}, \ref{sec:rati-ellipt-torus-7},
  \ref{sec:rati-ellipt-torus-6} and \ref{sec:rati-ellipt-torus-8} that there are isomorphisms of posets
  \begin{equation*}
    \mathcal{P}(G_{k})\rightarrow \mathcal{P}(H_{k}) 
  \end{equation*}
  such that \((G_{k}\cap G_{k'})\mapsto (H_{k}\cap H_{k'})\) where 
  \(H_k\) and \(H_{k'}\) are facets of the \(j(k)\)-th and
  \(j(k')\)-th factor of \(P\), respectively.
  
  If \(\bigcap_{k \in K} H_{k}\neq \emptyset\), then this intersection has \(2^m\) components, where \(m\) is the number of \(j_1\)'s with \(j_1<r\) and \(K\supset I_{j_1}=\{k;\; j(k)=j_1\}\).
  If \(K=I_{j_1}\) as above, then \(\bigcap_{k\in K} H_{k}\) has two components \(C_1\) and \(C_2\).
  Moreover, there is an automorphism of \(\mathcal{P}(P)\), which interchanges \(C_1\) and \(C_2\) and fixes all faces of \(P\) not contained in \(C_1\cup C_2\).
  Therefore, after composing some of the isomorphisms \(\mathcal{P}(G_{k})\rightarrow \mathcal{P}(H_{k})\) with these automorphisms if necessary, we can extend these isomorphisms to an isomorphism
  \begin{equation*}
    \mathcal{P}(Q)\rightarrow \mathcal{P}(P),
  \end{equation*}
  with \(G_{k}\mapsto H_{k}\).
  This completes the proof of the lemmas.
\end{proof}

For the proof of Lemma~\ref{sec:torus-manifolds-with-4} we need two
more sublemmas.

\begin{sublemma}
\label{sec:rati-ellipt-torus-10}
  Assume that we are in the situation of
  Lemma~\ref{sec:torus-manifolds-with-4}. Let \(G_{k_0}\) be a
  facet of \(Q\) belonging to the factor \(\Gamma_{j_0}\) of \(F\).
  Then there is a combinatorial equivalence
  \(\mathcal{P}(G_{k_0})\rightarrow \mathcal{P}((F\cap
  G_{k_0})\times[0,1])\) which sends each \(G_k\cap G_{k_0}\) to
  \((F\cap G_{k_0}\cap G_k)\times [0,1]\) and \(F\cap G_{k_0}\) to \((F\cap
  G_{k_0})\times \{0\}\).
\end{sublemma}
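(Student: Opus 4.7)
The plan is to apply the overall induction on $\dim Q$ to the facet $G_{k_0}$, which has dimension $n-1$, and then deduce the desired cylinder structure by invoking Lemma~\ref{sec:torus-manifolds-with-4} for $G_{k_0}$ with distinguished facet $F\cap G_{k_0}$. Since $G_{k_0}$ is an $(n-1)$-dimensional nice manifold with corners whose faces are all acyclic, and whose two-dimensional faces (being faces of $Q$) contain at most four vertices, the inductive hypothesis gives that Proposition~\ref{sec:torus-manifolds-with-2} holds for $G_{k_0}$. Moreover $F\cap G_{k_0}$ is a facet of $G_{k_0}$ which, by the product structure of $F$, is combinatorially equivalent to $\tilde{\Gamma}_{j_0}\times\prod_{i\neq j_0}\Gamma_i$, with the convention that the $\tilde{\Gamma}_{j_0}$ factor is absent when $\dim\Gamma_{j_0}=1$. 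Hence the case analysis in the proof of Proposition~\ref{sec:torus-manifolds-with-2} can be set up for $G_{k_0}$ with this distinguished facet.

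The key step is to verify that at every factor of $F\cap G_{k_0}$ only the generic cases~(\ref{item:5}), (\ref{item:7}) and (\ref{item:9}) occur. For a factor $\Gamma_i$ with $i\neq j_0$, the facets of $G_{k_0}$ belonging to it are the components of $G_k\cap G_{k_0}$ with $j(k)=i$, and the key identity
\begin{equation*}
G_{k_0}\cap\bigcap_{j(k)=i,\,k\in I}G_k \;=\; \bigcap_{j(k)=i,\,k\in I}(G_k\cap G_{k_0})
\end{equation*}
shows that a component of this intersection meets $F\cap G_{k_0}$ if and only if it meets $F$. Thus the emptiness, disjointness, or disconnected-components pattern dictated by case~(\ref{item:5}), (\ref{item:7}) or (\ref{item:9}) at $\Gamma_i$ in $F$ transfers verbatim to the same case at $\Gamma_i$ in $F\cap G_{k_0}$. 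For the factor $\tilde{\Gamma}_{j_0}$ (present only when $\dim\Gamma_{j_0}>1$), the facets belonging to it are $G_k\cap G_{k_0}$ for $j(k)=j_0$ and $k\neq k_0$; since $G_{k_0}$ itself belongs to $\Gamma_{j_0}$, one has $\bigcap_{j(k)=j_0,\,k\neq k_0}(G_k\cap G_{k_0})=\bigcap_{j(k)=j_0}G_k$, so case~(\ref{item:7}) (respectively~(\ref{item:9})) at $\Gamma_{j_0}$ of $F$ forces case~(\ref{item:7}) (respectively~(\ref{item:9})) at $\tilde{\Gamma}_{j_0}$ of $F\cap G_{k_0}$.

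Once this is in place, Lemma~\ref{sec:torus-manifolds-with-4} applied to $G_{k_0}$ with distinguished facet $F\cap G_{k_0}$, which is available by the induction on $\dim Q$ since $\dim G_{k_0}<n$, yields an isomorphism $\mathcal{P}(G_{k_0})\to\mathcal{P}((F\cap G_{k_0})\times[0,1])$ sending $F\cap G_{k_0}$ to $(F\cap G_{k_0})\times\{0\}$ and each $G_k\cap G_{k_0}$ to $(G_k\cap F\cap G_{k_0})\times[0,1]$, which is exactly the assertion of the sublemma. The main obstacle I expect is the careful bookkeeping in the previous paragraph: ruling out, for instance, that $G_k\cap G_{k_0}$ could split into several components in such a way that an exceptional case~(\ref{item:3}), (\ref{item:4}), (\ref{item:6}) or (\ref{item:8}) sneaks in at some factor of $F\cap G_{k_0}$. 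This is handled by combining the product decomposition of $G_{k_0}$ furnished by the Proposition (which forces rigid counts of facets per factor) with the connectivity transfer principle above, so that the combinatorial model of $G_{k_0}$ is compatible only with the generic cases under our standing hypothesis on $F$.
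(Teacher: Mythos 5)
Your overall strategy is the paper's: restrict to $G_{k_0}$ with distinguished facet $F\cap G_{k_0}$, verify that only the generic cases \ref{item:5}, \ref{item:7} and \ref{item:9} occur at the factors of $F\cap G_{k_0}$, and then invoke the inductive form of Lemma~\ref{sec:torus-manifolds-with-4}. The transfer of cases \ref{item:5} and \ref{item:9} is indeed immediate, exactly as you say, from $\bigcap_{k;\,j(k)=i}(G_k\cap G_{k_0})\subset\bigcap_{k;\,j(k)=i}G_k=\emptyset$.

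The gap is in the transfer of case \ref{item:7}, which is where the paper's proof does its actual work. First, at the factor $\tilde{\Gamma}_{j_0}$ with $n_{j_0}=2$: there $\tilde{\Gamma}_{j_0}=\Sigma^{1}$ is one-dimensional, so case \ref{item:7} cannot occur at it and your claim that case \ref{item:7} at $\Gamma_{j_0}$ ``forces case \ref{item:7}'' at $\tilde{\Gamma}_{j_0}$ does not typecheck. The generic case one must land in is \ref{item:5}, and to exclude the exceptional case \ref{item:3} one has to show that the unique other facet $G_k$ with $j(k)=j_0$ meets $G_{k_0}$ in \emph{two} components, each meeting $F$ in one component --- rather than in one component meeting $F\cap G_{k_0}$ in both facets of $\Sigma^1$. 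This is exactly what the disconnectedness hypothesis in case \ref{item:7} delivers, but it must be said; your text implicitly assumes each $G_k\cap G_{k_0}$ is a single facet of $G_{k_0}$, which fails precisely here. Second, for $i\neq j_0$ in case \ref{item:7}, the condition to be checked concerns components of $\bigcap_{k;\,j(k)=i}(G_k\cap G_{k_0})$, which are in general finer than components of $\bigcap_{k;\,j(k)=i}G_k$; your observation that a component meets $F$ iff it meets $F\cap G_{k_0}$ is true but not sufficient to pass disconnectedness down. One additionally needs that every component of $\bigcap_{k;\,j(k)=i}G_k$ meeting $F$ contains a component of $\bigcap_{k;\,j(k)=i}(G_k\cap G_{k_0})$ meeting $F\cap G_{k_0}$, which follows from the product structure of $F$ (each such component intersects $F\cap G_{k_0}$). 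Finally, your proposed fallback --- appealing to ``the product decomposition of $G_{k_0}$ furnished by the Proposition'' to rule out exceptional cases --- is circular: the induction gives that $G_{k_0}$ is \emph{some} product, but the specific cylinder structure with the asserted facet correspondence is exactly what the case verification is needed to produce.
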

\begin{proof}
   We consider the inclusion \(F\cap G_{k_0}\hookrightarrow G_{k_0}\).
  Then \(F\cap G_{k_0}\) is a facet of \(G_{k_0}\) and there is a combinatorial equivalence
  \begin{equation*}
     F\cap G_{k_0}\cong \tilde{\Gamma}_{j_0}\times \prod_{i\neq j_0}\Gamma_i
  \end{equation*}
 such that each component of \(G_{k_0}\cap G_{k}\cap F\) corresponds to a facet of the \(j(k)\)-th factor.
  The facets of \(G_{k_0}\) which meet \(F\cap G_{k_0}\) are given by  the components of \(G_{k_0}\cap G_{k}\) which meet \(F\).
  We show that if one of the cases  \ref{item:5}, \ref{item:7} and \ref{item:9} appears at the factor \(\Gamma_j\) of \(F\) then the same holds for the factor \(\Gamma_j\) of \(F\cap G_{k_0}\).
  If one of the cases \ref{item:5} and \ref{item:9} appears this is clear because
  \begin{equation*}
    \bigcap_{k;\;j(k)=j}(G_{k}\cap G_{k_0})\subset  \bigcap_{k;\;j(k)=j}G_k =\emptyset.
  \end{equation*}
  Therefore assume that case \ref{item:7} occurs at \(\Gamma_j\).
 
  At first assume that \(j(k)=j_0\) and \(n_j=2\).
  Then there is only one \(G_{k}\) which belongs to \(\Gamma_{j_0}\)
  and is not equal to \(G_{k_0}\).
  The intersection of \(G_{k}\) and \(G_{k_0}\) has two components which meet \(F\) because  \(G_{k}\cap G_{k_0}\cap F\) has two components and the union of those components of \(G_{k}\cap G_{k_0}\) which meet \(F\) is disconnected.
  Therefore we have that case \ref{item:5} appears at the factor \(\tilde{\Gamma}_{j_0}\) of \(F\cap G_{k_0}\).
  
  Next assume that \(j(k)=j\neq j_0\) or \(n_{j_0}>2\).
  Then there is only one component \(B_{k}\)  of \(G_{k_0}\cap G_{k}\) which meets \(F\) because \(G_{k_0}\cap G_{k}\cap F\) is connected.
  Clearly \(\bigcap_{k;\;j(k)=j} B_{k}\) is contained in \(\bigcap_{k;\;j(k)=j} G_{k}\cap G_{k_0}\).
  By dimension reasons \(\bigcap_{k;\;j(k)=j} B_{k}\) is a union of components of \(\bigcap_{k;\;j(k)=j} G_{k}\cap G_{k_0}\).
  In fact, the union of those components of \(\bigcap_{k;\;j(k)=j} B_{k}\) which meet \(F\) is equal to the union of those components of \(\bigcap_{k;\;j(k)=j} G_{k}\cap G_{k_0}\) which meet \(F\).

  Since every component of \(\bigcap_{k;\;j(k)=j} G_{k}\cap F\) contains exactly one component of \(\bigcap_{k;\;j(k)=j} G_{k}\cap G_{k_0}\cap F\), each component of \(\bigcap_{k;\;j(k)=j} G_{k}\) which meets \(F\) contains a component of \(\bigcap_{k;\;j(k)=j} B_{k}\) which meets \(F\).
  Because the union of those components of \(\bigcap_{k;\;j(k)=j} G_{k}\) which meet \(F\) is disconnected, the same holds for the union of those components of \(\bigcap_{k;\;j(k)=j} B_{k}\) which meet \(F\). 
  Hence, the case \ref{item:7} appears at the factor \(\Gamma_j\) of \(F\cap G_{k_0}\).

  Therefore it follows from the induction hypotheses that there is an isomorphism of posets \(\mathcal{P}(G_{k_0})\rightarrow\mathcal{P}((F\cap G_{k_0})\times [0,1])\), such that \(F\) is mapped to \((F\cap G_{k_0})\times \{0\}\) and the component \(C\) of \(G_{k_0}\cap G_{k}\) which meets \(F\) is mapped to \((F\cap C)\times [0,1]\).
  In particular, all components of \(G_{k_0}\cap G_{k}\) meet \(F\).
\end{proof}

\begin{sublemma}
\label{sec:rati-ellipt-torus-9}
  In the situation of Lemma \ref{sec:torus-manifolds-with-4}, there
  is exactly one facet \(H\) of \(Q\) which does not meet
  \(F\). Moreover, the following holds:
  \begin{enumerate}
  \item Under the isomorphism constructed in the previous sublemma,
    \(G_{k_0}\cap H\) corresponds to \((F\cap G_{k_0})\times \{1\}\).
  \item \(\mathcal{P}(H)\cong \mathcal{P}(F)\).
  \end{enumerate}
\end{sublemma}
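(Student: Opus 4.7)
For each facet $G_{k_0}$ of $Q$ meeting $F$, the plan begins by identifying the unique facet $H_{k_0}$ of $Q$ containing the ``top'' of $G_{k_0}$. Since $Q$ is a nice manifold with corners, each codimension-two face lies in exactly two facets; hence the top $T_{k_0} := (F \cap G_{k_0}) \times \{1\}$ of $G_{k_0}$, in the isomorphism of Sublemma~\ref{sec:rati-ellipt-torus-10}, is a component of $G_{k_0} \cap H_{k_0}$ for a unique facet $H_{k_0} \neq G_{k_0}$. One checks $H_{k_0} \cap F = \emptyset$: if $H_{k_0}$ met $F$ it would be some $G_k$, but Sublemma~\ref{sec:rati-ellipt-torus-10} forces every component of $G_{k_0} \cap G_k$ to meet $F$, whereas the top does not.

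Next, I would prove $H_{k_0}$ is independent of $k_0$ by an opposite-vertex argument. For a vertex $v$ of $F$, the $n-1$ facets $G_{k_1}, \ldots, G_{k_{n-1}}$ of $Q$ through $v$ other than $F$ intersect, via the product structures of Sublemma~\ref{sec:rati-ellipt-torus-10}, in the edge $\{v\} \times [0,1]$. Its other endpoint $v'$ lies in every $T_{k_i}$ and hence in every $H_{k_i}$; since $v'$ lies in exactly $n$ facets of $Q$ and the $G_{k_i}$ account for $n-1$ of them, the remaining facet is uniquely determined, yielding $H_{k_1} = \cdots = H_{k_{n-1}}$. Connectedness of the vertex-edge graph of $F$, together with the observation that adjacent vertices of $F$ share $n-2 \geq 1$ common facets of $F$, propagates the equality to all $k_0$, producing a single facet $H$.

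Finally, I would establish uniqueness of $H$ and the isomorphism $\mathcal{P}(H) \cong \mathcal{P}(F)$. Each facet of $G_{k_0}$ has a unique partner facet of $Q$, and the enumeration of $G_{k_0}$'s facets as bottom, sides, and top identifies these partners as $F$, the $G_k$'s, and $H$ respectively. Thus any putative additional facet $H' \neq H$ with $H' \cap F = \emptyset$ would be disjoint from every $G_{k_0}$. To rule out $H'$, I would trace the vertex-edge graph of $Q$ starting at a top vertex $v' = (v,1)$, whose facets are $G_{k_1}, \ldots, G_{k_{n-1}}, H$: every edge out of $v'$ either returns to $F$ via $\{v\} \times [0,1]$, or, by omitting some $G_{k_j}$ from the edge's $(n-1)$ facets, lies in $H$ as $e_F \times \{1\}$ for an edge $e_F$ of $F$ through $v$, leading to another top vertex $(v'', 1)$ whose facets again lie in $\{G_k\} \cup \{H\}$. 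By connectedness of the vertex-edge graph of $Q$, every vertex of $Q$ thus lies in $F \cup H$ with facets in $\{F, G_k, H\}$, so $H'$ cannot exist. The identification $G_{k_0} \cap H = (F \cap G_{k_0}) \times \{1\}$ is then automatic, and the facets of $H$ --- the $(F \cap G_k) \times \{1\}$ --- are in natural bijection with the facets of $F$, extending to $\mathcal{P}(H) \cong \mathcal{P}(F)$. The main obstacle is this uniqueness argument, which relies critically on the nice manifold-with-corners condition that every vertex of $Q$ lies in exactly $n$ facets and on the connectedness of the vertex-edge graph of $Q$.
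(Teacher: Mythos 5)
Your proposal is correct and follows essentially the same route as the paper: define $H_{k_0}$ as the unique second facet through the top of $G_{k_0}$, glue the $H_{k_0}$ into a single $H$ via common vertices and connectivity (you use the vertex--edge graph of $F$, the paper an equivalent case analysis on the factors), and then use connectivity of the vertex--edge graph of $Q$ together with the count of $n$ facets at each vertex to show $F$, the $G_k$ and $H$ exhaust the facets and to read off $\mathcal{P}(H)\cong\mathcal{P}(F)$ from the product structures. The only differences are cosmetic (order of the last two steps), and your final assertion that the facet bijection extends to a poset isomorphism is left at the same level of detail as in the paper.
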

  \begin{proof}
   Let \(H_{k_0}\) be the facet of \(Q\) which intersects \(G_{k_0}\) in the facet of \(G_{k_0}\) which corresponds to \((F\cap G_{k_0})\times \{1\}\).
  If \(q\) is a vertex of \(G_{k_0}\) corresponding to \((p,1)\), where \(p\) is a vertex of \(F\cap G_{k_0}\), then \(H_{k_0}\) is the facet of \(Q\) which is perpendicular to the edge of \(G_{k_0}\) which corresponds to \(\{p\}\times [0,1]\). 
  We claim that all \(H_{k_0}\) are the same.

  If \(j_1\neq j_2\), then the intersection of \(G_{k_1}\cap F\) and
  \(G_{k_2}\cap F\), where \(j(k_1)=j_1\) and \(j(k_2)=j_2\), is non-empty.
  If \(j_1=j_2\) and \(\dim G_{k_1}\cap F=\dim G_{k_2}\cap F>0\), there is a \(G_{k'}\) such that \(G_{k'}\cap G_{k_i}\cap F\neq\emptyset\) for \(i=1,2\).
  Hence, in these cases \(H_{k_1}=H_{k_2}\) because there is a vertex in \(G_{k_1}\cap G_{k_2}\cap F\).
  Since \(\dim Q>2\), \(F\) cannot be an interval.
  Hence, it follows that all \(H_{k}\) are equal, so that we can drop the indices.
  Since the vertex-edge-graph of \(H\) is connected, every face of \(Q\) contains at least one vertex and each vertex is contained in exactly \(n-1\) facets of \(H\), it follows that the facets of \(H\) are given by the \(G_{k}\cap H\).
  
  Indeed, if there is another facet of \(H\), then it contains a vertex \(v\) of \(H\).
  Since the vertex-edge-graph of \(H\) is connected, we may assume that \(v\) is connected by an edge to a vertex \(v'\in G_{k}\cap H\).
  It follows from the description of the combinatorial type of \(G_{k}\) that \(v'\) is contained in \(n-1\) facets of \(H\) of the form \(G_{k'}\cap H\).
  Therefore each edge which meets \(v'\) is contained in a facet of the form \(G_{k'}\cap H\).
  Hence, \(v\in G_{k'}\).
  Therefore it follows from the description of the combinatorial type of \(G_{k'}\) that all facets of \(H\) which contain \(v\) are of the form  \(G_{k''}\cap H\).
  This is a contradiction to the assumption that \(v\) is contained in a facet which is not of this form.

  Therefore there is an isomorphism of posets \(\phi: \mathcal{P}(F)\rightarrow \mathcal{P}(H)\), such that \(\phi(C_K\cap F)=C_K\cap H\).
  Here \(C_K\) is a component of the intersection \(\bigcap_{k\in K} G_{k}\).
  Since the vertex-edge-graph of \(Q\) is connected, every face of \(Q\) contains at least one vertex and each vertex is contained in exactly \(n\) facets, \(F\), \(H\), \(G_{k}\) is a complete list of facets of \(Q\).
\end{proof}

\begin{proof}[Proof of Lemma~\ref{sec:torus-manifolds-with-4}] 
  It follows from Sublemmas~\ref{sec:rati-ellipt-torus-10} and \ref{sec:rati-ellipt-torus-9} that the face posets of \(Q\) and \(F\times [0,1]\) are isomorphic. An isomorphism is given by
  \begin{align*}
    C_K&\mapsto (C_K\cap F)\times [0,1]& (F\cap C_K)&\mapsto (F\cap
    C_K)\times \{0\} \\ & & (H\cap C_K)&\mapsto (F\cap C_K)\times\{1\}.
  \end{align*}
  Here \(C_K\) is a component of the intersection \(\bigcap_{k\in K} G_{k}\).
  Therefore the sublemma is proved.
  \end{proof}

\begin{proof}[Proof of Theorem~\ref{sec:torus-manifolds-with}]
  It follows from Lemmas \ref{sec:torus-manifolds-with-1} and \ref{sec:torus-manifolds-with-2} that \(M/T\) is combinatorially equivalent to \(P=\prod_{i<r}\Sigma^{n_i}\times\prod_{i\geq r}\Delta^{n_i}\).
  Therefore, by Theorem \ref{sec:simpl-torus-acti-3}, \(M\) is homeomorphic to a torus manifold \(M'\) over \(P\).
  The manifold \(M'\) can be constructed as the model \(M_P(\lambda)\), where \(\lambda\) is the characteristic map of \(M\).
Now \(M'\) is the quotient of a free torus action on the moment angle complex \(Z_P\) associated to \(P\).
  But \(Z_P\) is equivariantly homeomorphic to a product of spheres with linear torus action.
  Therefore the theorem is proved.
\end{proof}

\section{Applications to rigidity problems in toric topology}
\label{sec:rigidity}

A torus manifold \(M\) is called quasitoric if it is locally standard and \(M/T\) is face-preserving homeomorphic to a simple convex polytope.
In toric topology there are two notions of rigidity one for simple polytopes and one for quasitoric manifolds.
These are:

\begin{definition}
  Let \(M\) be a quasitoric manifold over the polytope \(P\).
  \begin{itemize}
  \item \(M\) is called rigid if any other quasitoric manifold \(N\) with \(H^*(N;\mathbb{Z})\cong H^*(M;\mathbb{Z})\) is homeomorphic to \(M\).
  \item \(P\) is called rigid if any other simple polytope \(Q\), such that it exists a quasitoric manifold \(N\) over \(Q\) and a quasitoric manifold \(M'\) over \(P\) with \(H^*(N;\mathbb{Z})\cong H^*(M';\mathbb{Z})\), is combinatorially equivalent to \(P\).
  \end{itemize}
\end{definition}

It has been shown  by Choi, Panov and Suh \cite{MR2725043} that a product of simplices is a rigid polytope.
As a consequence of Theorem~\ref{sec:torus-manifolds-with} we have the following partial generalization of their result.

\begin{theorem}
\label{sec:appl-rigid-probl}
  Let \(M_1\) and \(M_2\) be two simply connected torus manifolds with
  \(H^*(M_1;\mathbb{Q})\cong H^*(M_2;\mathbb{Q})\) and \(H^{\text{odd}}(M_i;\mathbb{Z})=0\).
  Assume that \(\mathcal{P}(M_1/T)\) is isomorphic to \(\mathcal{P}(\prod_i \Sigma^{n_i}\times \prod_i \Delta^{n_i})\).
  Then the face posets of the orbit spaces of \(M_1\) and \(M_2\) are isomorphic.
\end{theorem}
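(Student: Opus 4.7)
The plan is to apply Theorem~\ref{sec:torus-manifolds-with} to \(M_2\) and then match the resulting product with \(M_1/T\) via the given cohomology isomorphism.

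First I would establish that \(M_2\) is rationally elliptic. By Theorem~\ref{sec:torus-manifolds-with} applied to \(M_1\), the manifold \(M_1\) is homeomorphic to a quotient of a free linear torus action on a product of spheres, and in particular is rationally elliptic and formal. Torus manifolds \(M\) with \(H^{\text{odd}}(M;\mathbb{Z})=0\) are formal (their rational cohomology is a quotient of the Stanley-Reisner ring described in~\cite{MR2283418}, and formality of such spaces is known from Notbohm-Ray and its generalizations). Consequently the isomorphism \(H^*(M_1;\mathbb{Q})\cong H^*(M_2;\mathbb{Q})\) is induced by a rational homotopy equivalence, and since rational ellipticity is a rational homotopy invariant, \(M_2\) is rationally elliptic as well.

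Now Theorem~\ref{sec:torus-manifolds-with} applies to \(M_2\) and yields
\begin{equation*}
  \mathcal{P}(M_2/T)\cong\mathcal{P}\Bigl(\prod_{i<s}\Sigma^{m_i}\times\prod_{i\geq s}\Delta^{m_i}\Bigr)
\end{equation*}
for some \(s\) and positive integers \(m_i\). It then remains to show that these multisets coincide with those of \(M_1\). By Corollary~7.8 of~\cite{MR2283418} each ring \(H^*(M_j;\mathbb{Q})\) is determined by the face poset and the characteristic function, and for our product face posets a rational change of basis in \(H^2\) eliminates the Hirzebruch-type twists, identifying \(H^*(M_j;\mathbb{Q})\) with the cohomology of the literal product \(\prod\mathbb{C}P^{n^{(j)}_i}\times\prod S^{2n^{(j)}_i}\). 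From the graded \(\mathbb{Q}\)-algebra structure alone one reads off the multiset of exponents: the degree-two indecomposables together with their nilpotency orders recover the \(\mathbb{C}P\)-factors, while the higher-degree indecomposables that square to zero recover the sphere factors. After accounting for the combinatorial coincidence \(\Sigma^{1}=\Delta^{1}\) at dimension one, this matches the two products factor-by-factor and proves the theorem.

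The principal obstacle is the first step, namely upgrading the rational cohomology isomorphism to a rational homotopy equivalence via formality of torus manifolds with vanishing odd integral cohomology; this invocation must be made carefully since the paper only assumes acyclic faces, not the stronger polytopal hypothesis of the Notbohm-Ray setting. Once rational ellipticity of \(M_2\) is in hand, the product structure for \(\mathcal{P}(M_2/T)\) comes immediately from Theorem~\ref{sec:torus-manifolds-with}, and the final algebraic comparison of exponents is routine.
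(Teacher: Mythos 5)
Your overall strategy matches the paper's (realize \(M_1\) as a quotient of a free linear torus action on a product of spheres, transfer rational ellipticity to \(M_2\), apply the classification to \(M_2\), then match factors), but two of your steps have genuine problems. First, the step you yourself flag as the principal obstacle is indeed a gap: you need \(M_2\) to be formal, but \(M_2\) is an arbitrary torus manifold with \(H^{\text{odd}}=0\) whose orbit space is only known to have acyclic faces, so the Notbohm--Ray/Panov--Ray formality results for (quasi)toric spaces do not apply as stated. The paper sidesteps this entirely: it argues that the cohomology ring of the quotient of a product of spheres is \emph{intrinsically} formal, so any simply connected space with isomorphic rational cohomology --- in particular \(M_2\) --- is automatically rationally homotopy equivalent to \(M_1\), hence rationally elliptic; no formality statement about \(M_2\) itself is needed. (Also, a small citation slip: to conclude that \(M_1\) is a quotient of a product of spheres you should invoke Theorem~\ref{sec:simpl-torus-acti-3} together with the moment-angle model over the product poset, not Theorem~\ref{sec:torus-manifolds-with}, whose hypothesis of rational ellipticity is not given for \(M_1\).)

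Second, your final matching step is incorrect as stated. A rational change of basis in \(H^2\) does \emph{not} in general eliminate the twists: for the projectivization \(P(\mathcal{O}\oplus\mathcal{O}(a))\rightarrow \C P^2\) with \(a\neq 0\) (a quotient of \(S^3\times S^5\) by a free linear \(T^2\)-action, with face poset \(\Delta^1\times\Delta^2\)) the ring is \(\Q[x,y]/(x^3,\,y^2-\tfrac{a^2}{4}x^2)\) after completing the square, and no nonzero degree-two class squares to zero; hence this ring is not isomorphic to \(H^*(\C P^1\times\C P^2;\Q)\), and ``nilpotency orders of degree-two indecomposables'' is not a well-defined invariant from which the exponents can be read off. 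The paper avoids this by comparing rational homotopy groups instead: using the fibrations \(T_i\rightarrow S_i\rightarrow M_i\) one gets \(\dim\pi_2(M_i)\otimes\Q=\dim T_i\) and \(\dim\pi_j(M_i)\otimes\Q=\dim\pi_j(S_i)\otimes\Q\) for \(j>2\) (after arranging all sphere factors to have dimension at least three), and products of spheres with equal rational homotopy groups have the same factors. You could repair your argument by routing through the rational homotopy groups in the same way, but the purely ring-theoretic recipe you propose does not work.
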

\begin{proof}
By Theorem~\ref{sec:simpl-torus-acti-3}, \(M_1\) is homeomorphic to a
quotient of a free linear torus action on a product of spheres.
Since the cohomology of such a quotient is intrinsically formal,
\(M_1\) and \(M_2\) are rationally homotopy equivalent and rationally elliptic.

Therefore both \(M_1\) and \(M_2\)
are homeomorphic to quotients of free torus actions on products of spheres \(S_i\), \(i=1,2\), where the dimension of the acting torus \(T_i\) is equal to the number of odd dimensional spheres in the product.
  Moreover, each factor in these products has at least dimension \(3\).
  And each factor in \(S_i\) corresponds to a factor of the face-poset of \(M_i/T\) which is combinatorially equivalent to \(\prod_j \Sigma^{n_{ji}}\times \prod_j \Delta^{n_{ji}}\).
  Therefore we have
  \begin{align*}
    \dim \pi_2(M_i)\otimes\Q&= \dim T_i&
    \dim \pi_2(S_i)\otimes\Q&= 0\\
    \dim \pi_j(M_i)\otimes\Q&= \dim \pi_j(S_i)\otimes\Q
  \end{align*}
for \(i=1,2\) and \(j>2\).
Since two products of spheres have the same rational homotopy groups if and only if they have the same number of factors of each dimension, it follows that the face posets of \(M_1\) and \(M_2\) are isomorphic.
\end{proof}

It is known that \(\prod_i \C P^{n_i}\), is rigid among quasitoric manifolds (see \cite{MR2978415} and the references therein).
The next corollary shows that \(\prod_i \C P^{n_i}\) is rigid among simply connected torus manifolds.

\begin{cor}
\label{sec:appl-rigid-probl-1}
  Let \(M\) be a simply connected torus manifold with \(H^*(M;\mathbb{Z})\cong H^*(\prod_i \C P^{n_i};\mathbb{Z})\).
  Then \(M\) is homeomorphic to \(\prod_i \C P^{n_i}\).
\end{cor}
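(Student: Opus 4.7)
The plan is to reduce to the known cohomological rigidity of \(\prod_i \C P^{n_i}\) within the class of quasitoric manifolds, using Theorem \ref{sec:appl-rigid-probl} to identify the combinatorial type of \(M/T\) and Theorem \ref{sec:simpl-torus-acti-3} to realize \(M\) as a quasitoric manifold over \(\prod_i \Delta^{n_i}\).

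First I would apply Theorem \ref{sec:appl-rigid-probl} with \(M_1=\prod_i \C P^{n_i}\) equipped with its standard torus action and \(M_2=M\). The orbit space of \(M_1\) is combinatorially equivalent to \(\prod_i \Delta^{n_i}\), which is the special case of Theorem \ref{sec:appl-rigid-probl} in which no \(\Sigma\)-factor appears; the required hypothesis \(H^{\text{odd}}(M;\mathbb{Z})=0\) follows from \(H^*(M;\mathbb{Z})\cong H^*(\prod_i \C P^{n_i};\mathbb{Z})\). This gives an isomorphism of face posets \(\mathcal{P}(M/T)\cong\mathcal{P}(\prod_i \Delta^{n_i})\).

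Next, Theorem \ref{sec:simpl-torus-acti-3} states that a simply connected torus manifold with vanishing odd integral cohomology is determined up to homeomorphism by the pair \((\mathcal{P}(M/T),\lambda)\). Transporting the characteristic function \(\lambda\) of \(M\) along the face poset isomorphism from the previous step yields a characteristic function \(\lambda'\) on \(Q=\prod_i \Delta^{n_i}\), and the corresponding model manifold \(M_Q(\lambda')\) from Section \ref{sec:pre} is then homeomorphic to \(M\). Because \(Q\) is a simple polytope and \(\lambda'\) inherits the Davis-Januszkiewicz vertex condition from \(\lambda\) via the poset isomorphism, \(M_Q(\lambda')\) is a quasitoric manifold, and its cohomology ring is isomorphic to \(H^*(\prod_i \C P^{n_i};\mathbb{Z})\).

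Finally, the known cohomological rigidity of \(\prod_i \C P^{n_i}\) among quasitoric manifolds (see \cite{MR2978415} and the references therein) gives \(M_Q(\lambda')\cong \prod_i \C P^{n_i}\); composing with the homeomorphism \(M\cong M_Q(\lambda')\) then yields the conclusion. I expect the main subtlety to lie in the middle step, namely checking that the transported \(\lambda'\) genuinely defines a quasitoric manifold \(M_Q(\lambda')\) homeomorphic to \(M\); once this is in place, the appeal to quasitoric rigidity is immediate.
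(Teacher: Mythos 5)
Your proposal is correct and follows essentially the same route as the paper: apply Theorem~\ref{sec:appl-rigid-probl} to identify \(\mathcal{P}(M/T)\) with \(\mathcal{P}(\prod_i \Delta^{n_i})\), use Theorem~\ref{sec:simpl-torus-acti-3} to replace \(M\) by the canonical (quasitoric) model over \(\prod_i\Delta^{n_i}\) with the same characteristic function, and then invoke the quasitoric cohomological rigidity of \(\prod_i \C P^{n_i}\) from \cite{MR2978415}. The only difference is cosmetic: you worry slightly more explicitly about transporting \(\lambda\) along the poset isomorphism, which the paper treats as immediate.
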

\begin{proof}
  By Theorem~\ref{sec:appl-rigid-probl}, we know that \(\mathcal{P}(M/T)\) is isomorphic to \(\mathcal{P}(\prod_i \Delta^{n_i})\).
  Denote by \(\lambda\) the characteristic function of \(M\).
  Then from a canonical model we can construct a quasitoric manifold \(M_1\) over \(\prod_i \Delta^{n_i}\) with characteristic function \(\lambda\).
  By Theorem~\ref{sec:simpl-torus-acti-3}, \(M\) and \(M_1\) are homeomorphic.
  Moreover, by Corollary 1.3 of \cite{MR2978415}, \(M_1\) is homeomorphic to \(\prod_i \C P^{n_i}\).
  Therefore the corollary follows.
\end{proof}

\section{Non-negatively curved torus manifolds}
\label{sec:towards}

In this section we prove the following:

\begin{theorem}
\label{sec:proof-theor-refs-1}
  Let \(M\) be a simply connected non-negatively curved torus manifold.
  Then \(M\) is equivariantly diffeomorphic to a quotient of a free linear torus action on a product of spheres.
\end{theorem}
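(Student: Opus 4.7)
The plan is to use Theorem~\ref{sec:torus-manifolds-with} to reduce the problem to an identification of smooth structures, and then upgrade the homeomorphism conclusion to an equivariant diffeomorphism by exploiting geometric input from Spindeler's work.

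First I would observe that, by Spindeler's proof of the Bott conjecture for simply connected torus manifolds, $M$ is rationally elliptic; hence $\chi(M)=\chi(M^T)>0$ and $H^{\text{odd}}(M;\Q)=0$. To apply Theorem~\ref{sec:torus-manifolds-with} I also need $H^{\text{odd}}(M;\mathbb{Z})=0$, equivalently that $H^*(M;\mathbb{Z})$ is torsion-free; this is a consequence of non-negative curvature together with the structural results on fixed-point components given by Spindeler. With this in hand, Lemma~\ref{sec:torus-manifolds-with-1} and Proposition~\ref{sec:torus-manifolds-with-2} yield an isomorphism of face posets $\mathcal{P}(M/T)\cong\mathcal{P}(P)$ where $P=\prod_{i<r}\Sigma^{n_i}\times\prod_{i\geq r}\Delta^{n_i}$, and Theorem~\ref{sec:simpl-torus-acti-3} gives a (merely topological) equivariant homeomorphism between $M$ and the canonical model $M_P(\lambda)$.

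The main step is then to promote this combinatorial/topological identification to a smooth one. For this I would show, using Spindeler's detailed description of the action of $T$ on $M$ and the induced stratification of $M/T$, that each face $F$ of $M/T$ is, after smoothing its corners, $T$-equivariantly diffeomorphic to the corresponding face of $P$, i.e.\ to a standard disc $D^{\dim F}$. Starting at vertices and inducting up the face lattice, one can invoke Spindeler's soul-type results to identify neighborhoods of lower-dimensional strata with standard linear models; equivariant collars then glue these identifications into a face-preserving diffeomorphism $M/T\to P$ covered by a suitable equivariant bundle map. Once the faces are smoothly standard, a smooth version of Lemma~\ref{sec:locally-stand-torus-2} gives an equivariant diffeomorphism $M\cong M_P(\lambda)$, and the latter is by construction the quotient of the moment-angle complex $Z_P$, which equivariantly is a product of spheres with the standard linear torus action, by a free linear torus action, as required.

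The hard part will be precisely the smoothing step in dimension near the problematic three-dimensional faces flagged in Remark~\ref{sec:simpl-torus-acti-1}. In the purely topological proof of Theorem~\ref{sec:torus-manifolds-with} one is free to fill in homology spheres by arbitrary contractible manifolds, which need not be smooth; here that trick is unavailable. To circumvent it I would use the fact that the geometric hypothesis of non-negative curvature forces every face of $M/T$ to already be a smooth disc (no exotic contractible fillings are needed), via Spindeler's analysis of the infinitesimal isotropy representations along the stratification. Given this input, the generalizations of the equivariant classification techniques from \cite{MR3030690} apply and produce the equivariant diffeomorphism; this last step is essentially formal once the face structure has been identified smoothly with that of $P$ and the characteristic function of $M$ has been matched with the standard one on the product of spheres.
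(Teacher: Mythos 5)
Your proposal is correct and follows essentially the same route as the paper: establish the combinatorial type of \(M/T\) via the rational ellipticity argument, use Spindeler's double disc bundle decomposition to show that \(M\) is locally standard and every face of \(M/T\) is (after smoothing corners) a standard disc, and then conclude with the smooth rigidity results (Davis's theorem on manifolds with corners with disc faces and the shellability-based equivariant classification generalizing \cite{MR3030690}). The paper carries out your ``smoothing step'' as an induction on \(\dim M\) using the decomposition \(M\cong D(F)\cup_E D(N)\) rather than an induction up the face lattice, but the substance is the same.
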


For the proof of this theorem we need the following result of Spindeler.

\begin{theorem}[{\cite[Theorem 3.28 and Lemma 3.30]{spindeler}}]
\label{sec:proof-theor-refs}
  Let \(M\) be a closed non-negatively curved fixed point homogeneous Riemannian manifold.
  Then for every maximal fixed point component \(F\) there exists a smooth invariant submanifold  \(N\subset M\) such that \(M\) decomposes as the union of the normal disc bundles of \(N\) and \(F\):
  \begin{equation}
    \label{eq:1} M\cong D(F)\cup_E D(N).
  \end{equation}
Here \(E=\partial D(F)\cong \partial D(N)\).
Further \(N\) is invariant under the group \(U=\{f\in\Iso(M);\; f(F)=F\}\). Moreover, the decomposition~(\ref{eq:1}) is \(U\)-equivariant with respect to the natural action of \(U\) on \(D(F)\), \(D(N)\) and \(M\).
\end{theorem}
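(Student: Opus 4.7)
The plan is to adapt the classical double soul / double disc-bundle decomposition for non-negatively curved manifolds with large isometry groups, due in various guises to Cheeger--Gromoll, Perelman, Grove--Searle and Wilking, to the fixed point homogeneous setting. Throughout, let $G\subset\Iso(M)$ be the group whose action makes $M$ fixed point homogeneous, so that $G$ acts transitively on the unit normal sphere of $F$, and let $f=d_F\colon M\to\R_{\geq 0}$ be the distance function to $F$.

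First I would study the critical points of $f$. Since the metric and $F$ are $U$-invariant, $f$ is $U$-invariant; in particular it is $G$-invariant. Non-negative curvature plus Toponogov's theorem implies that $f^2$ is concave along geodesics that stay far from the cut locus of $F$, and a standard Berger/Grove critical point analysis shows that any critical point $p\notin F$ of $f$ must satisfy: for every $v\in T_pM$ there is a minimal segment from $p$ to $F$ whose initial direction makes an angle $\leq\pi/2$ with $v$. Using that $G$ acts transitively on the normal sphere to $F$, one shows that this critical set is precisely the set $N=\{x\in M : f(x)=\max f\}$ where the maximum is attained. Since $f$ is $U$-invariant, so is $N$.

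Second I would prove that $N$ is a smooth, totally geodesic submanifold. The key input is the Cheeger--Gromoll--Perelman fact that the set of maxima of a function whose sublevel sets are totally convex is a smooth totally geodesic, totally convex submanifold. Convexity of the sublevel sets $\{f\leq c\}$ (for $c<\max f$) follows from the same Toponogov-plus-fixed-point-homogeneity argument: the transitivity of the slice representation of $G$ on the unit normal sphere $\nu^1F$ is used to rule out horizontal critical directions for $f$ on $M\setminus(F\cup N)$, so that $f$ is regular there and its sublevel sets grow by a genuinely convex deformation. Smoothness of $N$ then follows from the convex-function soul argument applied to the complement of a tubular neighborhood of $F$.

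Third, with $F$ and $N$ smooth totally geodesic submanifolds of $M$ that are $U$-invariant, I would construct the decomposition (\ref{eq:1}) via the gradient flow of (a smoothing of) $f$, exactly as in the Sharafutdinov retraction. This flow is $U$-equivariant because $f$ is $U$-invariant, and it gives a $U$-equivariant deformation retraction of $M\setminus F$ onto $N$ whose fibers are geodesic rays from $F$ to $N$. Identifying each such ray with the corresponding normal vector shows simultaneously that $M\setminus\interior D(N)=D(F)$ and $M\setminus\interior D(F)=D(N)$ as $U$-equivariant disc bundles, with common boundary $E=\partial D(F)\cong\partial D(N)$.

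The main obstacle, and the step where fixed point homogeneity is truly essential, is the second one: controlling the critical point structure of $d_F$ so that the only critical set away from $F$ is the smooth submanifold $N$, with no intermediate ``fat'' critical strata and no tangential cut locus behavior. Without the transitivity of $G$ on $\nu^1F$ one only gets Perelman's generalized soul with a possibly singular soul; fixed point homogeneity is what promotes this to the clean double disc-bundle picture with smooth pieces, smooth gluing sphere bundle $E$, and $U$-equivariance of all constructions.
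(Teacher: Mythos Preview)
The paper does not contain a proof of this theorem: it is quoted verbatim as a result of Spindeler (Theorem~3.28 and Lemma~3.30 of \cite{spindeler}) and then used as a black box in the proofs of Lemma~\ref{sec:proof-theor-refs-2} and Lemma~\ref{sec:non-negat-curv-1}. There is therefore nothing in this paper to compare your proposal against.

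That said, your outline is broadly the correct strategy and is essentially the one Spindeler (following Grove--Searle in the positive-curvature case) carries out: one studies the distance function $d_F$, uses the transitivity of $G$ on the unit normal sphere of $F$ together with non-negative curvature and Toponogov to control critical points, shows that the set $N$ of points at maximal distance from $F$ is a smooth totally geodesic submanifold, and obtains the double disc-bundle decomposition by flowing along the gradient. One technical point you gloss over is that in non-negative (as opposed to positive) curvature the sublevel sets $\{d_F\leq c\}$ need not be \emph{strictly} convex, and the set $N$ at maximal distance is only guaranteed to be a submanifold (possibly with boundary) via a more delicate soul-type argument; Spindeler has to work harder than ``standard Berger/Grove critical point analysis'' to exclude boundary points of $N$ and to ensure the resulting decomposition is smooth and $U$-equivariant. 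Your sketch identifies this as the main obstacle, which is accurate, but the details are the content of Spindeler's thesis rather than something reproduced in this paper.
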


Here a Riemannian \(G\)-manifold is called fixed point homogeneous if there is a component \(F\) of \(M^G\), such that, for every \(x\in F\), \(G\) acts transitively on the normal sphere \(S(N_x(F,M))\).
Such a component \(F\) is called maximal component of \(M^G\).

The above mentioned natural \(U\)-actions on the normal disc bundles are
given by the restrictions of the natural actions on the normal bundles given by
differentiating the original action on \(M\).

Now let \(M\) be a torus manifold and \(F\subset M\) a characteristic submanifold.
Then \(M\) is naturally a fixed point homogeneous manifold with respect to the \(\lambda(F)\)-action on \(M\).
Moreover, the torus \(T\) is contained in the group \(U\) from the above theorem.
In this situation we have the following lemma.

\begin{lemma}
\label{sec:proof-theor-refs-2}
  Let \(M\) be a  simply connected torus manifold with an invariant metric of non-negative curvature.
  Then \(M\) is locally standard and \(M/T\) and all its faces are diffeomorphic (after-smoothing the corners) to standard discs \(D^k\).
  Moreover, \(H^{\text{odd}}(M;\mathbb{Z})=0\).
\end{lemma}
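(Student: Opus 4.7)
I plan to prove the lemma by induction on $2n = \dim M$. The base case $n=1$ forces $M = S^2$, for which every claim is immediate. For the inductive step, I first want to produce a characteristic submanifold $F$ of $M$: choosing $p \in M^T$, the isotropy representation splits into $n$ two-dimensional weight spaces, each with a circle isotropy $\lambda \subset T$, and the connected component of $M^\lambda$ containing $p$ is a characteristic submanifold $F$. Since $\lambda$ acts transitively on the unit circles in the normal bundle of $F$, the pair $(M,\lambda)$ is fixed-point homogeneous, and Spindeler's Theorem~\ref{sec:proof-theor-refs} yields a $T$-equivariant decomposition
\begin{equation*}
  M \;\cong\; D(F)\cup_E D(N),
\end{equation*}
where $U \supseteq T$ acts on both pieces.

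The submanifold $F$ is a torus manifold for $T' = T/\lambda$ of real dimension $2(n-1)$ with an induced invariant metric of non-negative curvature. To apply the inductive hypothesis I need $F$ to be simply connected, which I plan to deduce from Spindeler's decomposition together with a Mayer--Vietoris/van~Kampen argument using that $M \setminus F$ deformation retracts onto $N$. Granted this, induction gives that $F$ is locally standard, that $F/T'$ and all its faces are discs after smoothing corners, and that $H^{\text{odd}}(F;\mathbb{Z})=0$. Because $\lambda$ acts on the normal 2-plane bundle of $F$ by standard rotation, $D(F)$ is an equivariant $D^2$-bundle with quotient
\begin{equation*}
  D(F)/T \;\cong\; (F/T')\times[0,1],
\end{equation*}
a standard disc of dimension $n$ whose faces are again standard discs.

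I then want to treat $N$ analogously. It is closed, $T$-invariant, and non-negatively curved, and $D(N)$ is a disc bundle on which a quotient of $T$ acts. By identifying the effective residual torus on $N$ (dividing out the kernel of its action) and verifying that the quotient is again a simply connected torus manifold with non-negative curvature, I can apply induction (or re-iterate Spindeler's decomposition, reducing the dimension further) to conclude that $N/T$ and all its faces are standard discs. The orbit space $M/T$ is obtained by gluing $D(F)/T$ and $D(N)/T$ along $E/T$, and the task is to check that the induced face structures on the two sides match up combinatorially into a global corner structure whose faces are all discs. Local standardness of the $T$-action on $M$ follows from the disc-bundle descriptions of neighborhoods of points in $F$ and $N$, and once $M$ is locally standard with acyclic faces, the Masuda--Panov criterion recalled in Section~\ref{sec:pre} immediately gives $H^{\text{odd}}(M;\mathbb{Z})=0$.

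The main obstacle I expect is the analysis of $N$ and the gluing: specifically, identifying the torus acting effectively on $N$, verifying that $N$ inherits simple-connectedness and the fixed-point condition needed to invoke the inductive hypothesis, and propagating the inductive description from $F$ across the common boundary $E$ to $N$ so that the faces of $D(F)/T$ and $D(N)/T$ assemble coherently into a disc with disc faces. Producing the right bookkeeping at the gluing locus --- in particular controlling which characteristic submanifolds of $M$ restrict to characteristic submanifolds of $F$ versus $N$ and how the circle isotropies interact --- is where essentially all of the technical work will be concentrated.
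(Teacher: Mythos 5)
Your overall strategy (induction on dimension via Spindeler's double disc bundle decomposition $M\cong D(F)\cup_E D(N)$, applying the inductive hypothesis to the characteristic submanifold $F$) is the same as the paper's, and your treatment of the $F$-side, including $D(F)/T\cong (F/T')\times[0,1]$, is fine. The genuine gap is in your treatment of $N$. You plan to ``verify that $N$ inherits simple-connectedness and the fixed-point condition needed to invoke the inductive hypothesis,'' but this verification fails in general: $N$ need not contain a $T$-fixed point and need not even be even-dimensional. Already for $M=S^4\subset\C^2\oplus\R$ with $F=S^2=\{z_2=0\}$, the set $N$ at maximal distance from $F$ is the circle $\{z_1=0,\,x=0\}$, on which $\lambda(F)$ acts freely; no quotient torus makes $N$ a torus manifold, and iterating Spindeler's decomposition on $N$ is not available either since $N$ need not be fixed point homogeneous. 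So ``apply induction to $N$'' is not a valid step.

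The paper splits into two cases according to whether the image $\pi_N(\pi_F^{-1}(x))$ of the fiber over a $T$-fixed point $x\in F$ is zero- or one-dimensional. In the first case $N$ is a fixed point component of a subtorus, hence a simply connected non-negatively curved torus manifold, and your argument goes through. In the second case the paper must prove a nontrivial Claim --- that $\lambda(F)$ acts freely on $N$ --- via an orientability and parity argument on $\pi_1(N)$ (a $\mathbb{Z}_2$-fixed point would force $\pi_1(N)$ to have both even and odd order), then show $N$ sits as a codimension-one submanifold of a fixed point component $N'$ of a subtorus $T''$ with trivial normal bundle in $N'$, describe $E$ as $P\times_{T''}S^{2k}$, and transport local standardness from $F\cong E/\lambda(F)$ to $N/\lambda(F)$ before computing $D(N)/T\cong N/T'''\times D^{k+1}$. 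None of this is anticipated in your outline, and it is where the real content of the lemma lies. A secondary, smaller omission: even after both disc bundle quotients are identified, one must still argue that \emph{every} face of $M/T$ is a disc; the paper does this by noting that all characteristic submanifolds are simply connected and non-negatively curved, so any facet containing a vertex has only disc faces by induction, and connectedness of $\partial(M/T)$ forces every facet to contain a vertex.
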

\begin{proof}
  We prove this lemma by induction on the dimension of \(M\).
  If \(2n=\dim M\leq 2\), then this is obvious.
  Therefore assume that \(\dim M\geq 4\) and that the lemma is proved in all dimensions less than \(\dim M\).

  By Theorem~\ref{sec:proof-theor-refs}, we have a decomposition
  \begin{equation*}
    M=D(N)\cup_E D(F),
  \end{equation*}
where \(F\) is a characteristic submanifold of \(M\) and \(E\) the
\(S^1\)-bundle associated to the normal bundle of \(F\).
Spindeler proved that \(\codim N\geq 2\) and \(\pi_1(F)=0\) if \(M\) is simply connected \cite[Lemma 3.29 and Theorem 3.35]{spindeler} (see also the proof of Lemma~\ref{sec:non-negat-curv-1} below).

Since \(F\) is totally geodesic in \(M\), it admits an invariant metric of non-negative curvature.

It follows from the exact homotopy sequence for the fibration \(\pi_F:E\rightarrow F\) that \(\pi_1(E)\) is cyclic and generated by the inclusion of a fiber of \(\pi_F\).

The circle subgroup \(\lambda(F)\) of \(T\), which fixes \(F\), acts
freely on \(E\) by multiplication on the fibers of \(\pi_F\).
It follows from the exact homotopy sequence for the fibration \(\pi_N:E\rightarrow N\), that \(\pi_1(N)\) is generated by the curve
\begin{align*}
  \gamma_0: S^1=\lambda(F)&\rightarrow N,& z&\mapsto zx_0,
\end{align*}
where \(x_0\in N\) is any base point of \(N\).

Let \(x\in F\) be a \(T\)-fixed point.
Then, since the \(T\)-action on \(M\) is effective, up to an automorphism of \(T\), the \(T\)-representation on the tangent space at \(x\) is given by the standard representation on \(\C^n\).
Therefore \(T\) decomposes as \(T\cong (S^1)^n\), where each \(S^1\)-factor acts non-trivially on exactly one factor of \(T_xM\cong \C^n\).
It acts on this factor by complex multiplication.
Since \(\lambda(F)\) acts trivially on \(T_xF\subset T_xM\), \(\lambda(F)\) is equal to one of these \(S^1\)-factors.

Let \(T'\) be the product of the other factors.
 Then the fiber of \(\pi_F\) over \(x\) is a \(T\)-orbit of type \(T/T'\).

Then there are two cases:
\begin{enumerate}
\item \(\dim \pi_N(\pi_F^{-1}(x))=0\)
\item \(\dim \pi_N(\pi_F^{-1}(x))=1\)
\end{enumerate}

In the first case \(\pi_N(\pi_F^{-1}(x))\) is a \(T\)-fixed point \(\bar{x}_1\) in \(N\).
Because \(N\) is \(T\)-invariant, it follows from an investigation of the \(T\)-representation \(T_{\bar{x}_1}M\) that \(N\) is a fixed point component of some subtorus \(T''\) of \(T\) with \(2\dim T''=\codim N\).
Therefore \(N\) is a torus manifold.
Since \(N\) is totally geodesic in \(M\) it  follows that the induced metric on \(N\) has non-negative curvature.
Moreover \(N\) is simply connected since \(\gamma_0\) is constant for \(x_0=\bar{x}_1\).
Hence, it follows from the induction hypothesis that \(N\) is locally standard and \(N/T\) is diffeomorphic after smoothing the corners to a standard disc.
Hence it follows that the \(T\)-actions on \(D(N)\) and \(D(F)\) are locally standard and
\begin{align*}
  D(N)/T&\cong N/T\times \Delta^k \cong D^n,\\
  D(F)/T&\cong F/T\times I\cong D^n.
\end{align*}
Since \(E/T\cong F/T\) is also  diffeomorphic to a disc, it follows that \(M/T\) is diffeomorphic to a standard disc.
In particular, \(\partial M/T\) is connected.

Note that by the above arguments all characteristic submanifolds of
\(M\) are simply connected and admit an invariant metric of
non-negative curvature.
Therefore from the induction hypothesis, we know that if a facet \(\tilde{F}\) of \(M/T\) contains a vertex, then all faces contained in \(\tilde{F}\) are diffeomorphic after smoothing the corners to standard discs.
In particular each such face contains a vertex.

Since \(\partial M/T\) is connected, it follows that every facet \(\tilde{F}\) of \(M/T\) contains a vertex.
Because each proper face of \(M/T\) is contained in a facet, it follows that all faces of \(M/T\) are diffeomorphic to standard discs.
By \cite[Theorem 2]{MR2283418}, we have \(H^{\text{odd}}(M;\mathbb{Z})=0\).
Hence, the lemma follows in this case.

In the second case \(\pi_N(\pi_F^{-1}(x)))\) is a one-dimensional orbit.
Moreover,  \(\pi_F^{-1}(x)\) is an orbit of type \(T/T'\).
Since the \(T\)-action on \(M\) is effective and \(T'\) is a subtorus
of \(T\) of codimension one, it follows from dimension reasons and the
slice theorem that there is an invariant neighborhood of
\(\pi_F^{-1}(x)\) which is equivariantly diffeomorphic to 
\begin{equation}
\label{eq:4}
  \lambda(F)\times \C^{n-1} \times \R,
\end{equation}
where \(\C^{n-1}\) is a faithful \(T'\)-representation and \(\R\) is a trivial representation.
Since \(E\) has an invariant collar in \(D(F)\) and \(D(N)\), the \(\R\)-factor is normal to \(E\).

Since \(\pi_N\) is a equivariant, \(\pi_N(\pi_F^{-1}(x)))\)
is an orbit of type \(T/(H_0\times T')\), where \(H_0\) is a finite
subgroup of \(\lambda(F)\).

By an argument similar to the argument given above for
\(\pi_F^{-1}(x)\), \(\pi_N(\pi_F^{-1}(x)))\) has an invariant neighborhood
 in \(M\) which is diffeomorphic to
\begin{equation}
\label{eq:2}  \lambda(F)\times_{H_0} \C^{n-1} \times \R,
\end{equation}
where \(T'\) acts effectively on \(\C^{n-1}\) and the \(H_0\)-action on \(\C^{n-1}\times \R\) commutes with the \(T'\)-action.
Moreover, the factor \(\R\) is normal to \(N\) because the
\(\R\)-factor in (\ref{eq:4}) is normal to \(E\) and \(\pi_N\) is an
equivariant submersion.

The restriction of the tangent bundle of \(N\) to the orbit \(\pi_N(\pi_F^{-1}(x))\cong \lambda(F)/H_0\) is an invariant subbundle of the restriction of the tangent bundle of \(M\) to this orbit.
The latter is isomorphic to \(\lambda(F)\times_{H_0} \C^{n-1} \times \R\).

Because \(T'\) has dimension \(n-1\) and acts effectively on
\(\C^{n-1}\), the invariant subvector bundles of this bundle are all of the form
\begin{equation*}
  \lambda(F)\times_{H_0} \C^k \times \R^l,
\end{equation*}
with \(0\leq k\leq n-1\) and \(l=0,1\).
Since the \(\R\)-factor is normal to \(N\) and \(M\) has even dimension, it follows that \(N\) has odd dimension.

{\bf Claim:} \(\lambda(F)\) acts freely on \(N\).

Assume that there is an \(H\subset \lambda(F)\), \(H\neq \{1\}\), such that \(H\) has a fixed point \(x_2\in N\).
We may assume that \(H\) has order equal to a prime \(p\).
Then \(H\) acts freely on the fiber of \(\pi_N\) over \(x_2\).
This fiber is diffeomorphic to \(S^{2k}\).
Since \(2=\chi(S^{2k})\equiv \chi((S^{2k})^H)\mod p\), it follows that \(p=2\).
In this case the restriction of \(E\) to the orbit \(\lambda(F)x_2\) is a non-orientable sphere bundle.
Hence \(N\) is not orientable.
Therefore \(\pi_1(N)\) has even order.

Let
\begin{align*}
  \gamma_1: [0,\frac{1}{2}]&\rightarrow N,& y&\mapsto \exp(i2\pi y)x_2.
\end{align*}
Then \(\gamma_0\) is homotopic to \(2\gamma_1\).
Since \(\pi_1(N)\) is cyclic and generated by \(\gamma_0\), it follows that
\begin{align*}
  [\gamma_0]=2[\gamma_1]=2k[\gamma_0],
\end{align*}
for some \(k\in \mathbb{Z}\). 
Hence, \(0=(2k-1)[\gamma_0]\), which implies that \(\pi_1(N)\) is of odd order.
This gives a contradiction.
Therefore \(\lambda(F)\) acts freely on \(N\).
In particular \(H_0\) is trivial.

Now it follows from (\ref{eq:2}), that \(N/\lambda(F)\) is a torus manifold with \(\pi_1(N/\lambda(F))=0\).
Hence, \(N\) is orientable, because the stable tangent bundle of \(N\) is isomorphic to the pullback of the stable tangent bundle of \(N/\lambda(F)\).
Moreover, by (\ref{eq:2}), \(N\) is a codimension-one submanifold of a fixed point component \(N'\) of a subtorus \(T''\subset T\) with \(2\dim T''=\codim N'\).
The normal bundle of \(N'\) in \(M\) splits as a sum of complex line bundles.
Therefore \(N'\) is orientable and the normal bundle of \(N\) in \(N'\) is trivial.
Hence, the structure group of the normal bundle of \(N\) in \(M\) (and also of \(E\rightarrow N\)) is given by \(T''\).

Let \(T'''\) be a complimentary subtorus of \(T\) to \(T''\) with \(T'''\supset \lambda(F)\).
The \(T\)-action on \(E\) can be described as follows.
\(T''\) acts linearly on the sphere \(S^{2k}\) with \(2k=\codim N -1\).
Let \(P\) be the principal \(T''\)-bundle associated to \(E\rightarrow N\).
Then we have
\begin{equation*}
  E\cong P\times_{T''}S^{2k}.
\end{equation*}
The \(T'''\)-action on \(N\) lifts to an action on \(P\).
Together with the \(T''\)-action on \(S^{2k}\) this action induces the \(T\)-action on \(E\).

Let \(H\subset T'''/\lambda(F)\) be the isotropy group of some point \(y\in N/\lambda(F)\).
Then \(H\) acts on the fiber of \(E/\lambda(F)\) over \(y\) via a homomorphism \(\phi:H\rightarrow T''\).
This \(\phi\) depends only on the component of \((N/\lambda(F))^H\) which contains \(y\).
Since \(H'=\graph \phi^{-1}\subset T''\times T'''/\lambda(F)\) acts trivially on the fiber of \(E/\lambda(F)\rightarrow N/\lambda(F)\) over \(y\), it follows that
\begin{equation*}
  \codim (N/\lambda(F))^H=\codim (E/\lambda(F))^{H'}
\end{equation*}
and that \(H'\) is the isotropy group of generic points in the fiber over \(y\).
Since \(E/\lambda(F)\) is equivariantly diffeomorphic to \(F\) and \(F\) is locally standard by the induction hypothesis, it follows that \(H\) is a torus and \(2\dim H=\codim (N/\lambda(F))^H \).
Therefore \(N/\lambda(F)\) is locally standard.
Hence it follows that \(M\) is locally standard in a neighborhood of \(N\).
Since \(M\) is also locally standard in a neighborhood of \(F\), it follows that \(M\) is locally standard everywhere.

Now we have the following sequence of diffeomorphisms
\begin{equation*}
  \begin{split}
  D^{n-1}\cong F/T\cong E/T=(E/T'')/T'''=(P\times_{T''}(\Sigma^k))/T'''\\=N/T'''\times \Sigma^k\cong N/T'''\times D^k.    
  \end{split}
\end{equation*}
Hence there is a diffeomorphism \(D^n\cong N/T'''\times D^{k+1} \cong N/T'''\times \Sigma^{k+1} \cong D(N)/T\).
Now the statement follows as in the first case.
\end{proof}

For the proof of Theorem~\ref{sec:proof-theor-refs-1} we need some more preparation.

\begin{lemma}
\label{sec:non-negat-curv}
  Let \(Q\) be a nice manifold with corners such that all faces of \(Q\) are diffeomorphic (after smoothing the corners) to standard discs.
  Then the diffeomorphism type of \(Q\) is uniquely determined by \(\mathcal{P}(Q)\).
\end{lemma}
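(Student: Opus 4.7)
The plan is to imitate the proof of Lemma~\ref{sec:locally-stand-torus-2} in the smooth category, proceeding by induction on $n = \dim Q$. The base case $n = 0$ is immediate: the isomorphism of face posets furnishes a bijection of vertex sets, which is the required diffeomorphism.

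For the inductive step, given two such $Q_1, Q_2$ and an isomorphism $\phi\colon \mathcal{P}(Q_1) \to \mathcal{P}(Q_2)$, I would build a face-preserving diffeomorphism $f\colon Q_1 \to Q_2$ with $f(F) = \phi(F)$ for every face $F$ by extending $f$ over the skeleta of $Q_1$. First $f$ is determined by $\phi$ on the $0$-skeleton. Assuming $f$ has been defined as a face-preserving diffeomorphism on the $(k-1)$-skeleton, together with a compatible system of collar neighborhoods of the lower-dimensional faces inside each $k$-face, for every $k$-face $F$ of $Q_1$ the restriction $f\vert_{\partial F}$ is a diffeomorphism onto $\partial \phi(F)$; since $F$ and $\phi(F)$ are standard $k$-discs after corner smoothing, one then extends it to a diffeomorphism $F \to \phi(F)$. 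Matched collars along common sub-faces ensure that the extensions on different $k$-faces agree on intersections, so they assemble into a face-preserving diffeomorphism on the $k$-skeleton. The same step carried out at $k = n$ extends the boundary diffeomorphism across the interior of $Q_1$.

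The main obstacle is the extension from $\partial F$ to $F$: for $k$ large, a diffeomorphism of $S^{k-1}$ need not extend smoothly to $D^k$, the obstruction being related to the group $\Theta_k$ of exotic spheres. I would address this by exploiting the freedom to modify $f\vert_{\partial F}$ within its pseudo-isotopy class, afforded by the flexibility in the collar choices made at earlier inductive stages, so that each boundary diffeomorphism can be arranged to extend. Combined with the uniqueness of the standard smooth structure on $D^k$ (treating $k = 4$ separately if needed, since the relevant discs here arise as quotients of explicit smooth torus actions), this closes the induction and produces the desired $f\colon Q_1 \to Q_2$.
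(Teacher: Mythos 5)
Your outline reproduces the skeleton-by-skeleton induction of Lemma~\ref{sec:locally-stand-torus-2}, and you correctly identify the point where the smooth category differs from the topological one: a diffeomorphism $\partial F\to\partial\phi(F)$ of the boundary of a $k$-disc need not extend over the disc, the obstruction living in $\Gamma_k\cong\Theta_k$ (nontrivial already for $k=7$). The gap is that your proposed fix does not address this obstruction. Changing collars, or more generally replacing $f\vert_{\partial F}$ by anything in its isotopy or pseudo-isotopy class, does not change whether it extends over $F$: if $f_0$ extends and $f_1$ is (pseudo-)isotopic to $f_0$, one glues the (pseudo-)isotopy cylinder into a collar of $\partial F$ to extend $f_1$ as well. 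So ``modifying $f\vert_{\partial F}$ within its pseudo-isotopy class'' can never turn a non-extendable boundary map into an extendable one; to kill the obstruction you must change the pseudo-isotopy class. The genuine freedom in the induction is of a different kind: when you extended $f$ over a $(k-1)$-face $G$, the extension was only well defined up to composition with an element of $\pi_0\operatorname{Diff}(D^{k-1},\partial D^{k-1})\cong\Theta_k$, and re-choosing it alters the extension obstruction of \emph{every} $k$-face containing $G$. One is then left with a nontrivial obstruction-theoretic problem — a $\Theta_k$-valued ``cocycle'' on the $k$-faces that must be shown to be a ``coboundary'' of choices on the $(k-1)$-faces — and your proposal contains no argument that this can always be solved for an arbitrary nice manifold with corners with disc faces. (Your parenthetical about $k=4$ is also misplaced: by Cerf every diffeomorphism of $S^3$ extends over $D^4$, so that case is harmless, whereas the problem is genuine for $k\geq 7$; and the lemma concerns an abstract $Q$, not one presented as an orbit space.)

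For comparison, the paper does not attempt this induction at all: the proof of Lemma~\ref{sec:non-negat-curv} is a direct appeal to Davis \cite[Theorem 4.2]{davis13:_when_coxet}, where precisely this extension/obstruction issue is dealt with. So either cite that result, or supply the missing argument that the $\Theta_k$-obstructions can be simultaneously cancelled by re-choosing the extensions over the $(k-1)$-skeleton; as written, the induction does not close.
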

\begin{proof}
  This follows directly from results of Davis \cite[Theorem 4.2]{davis13:_when_coxet}.
\end{proof}

In analogy to line shellings for polytopes we define shellings for nice manifolds with corners.

\begin{definition}
  Let \(Q\) be a nice manifold with corners such that all faces of \(Q\) are contractible.
  An ordering \(F_1,\dots,F_s\) of the facets of \(Q\) is called a shelling if
  \begin{enumerate}
  \item \(F_1\) has a shelling.
  \item For \(1< j\leq s\), \(F_j\cap \bigcup_{i=1}^{j-1} F_i\) is the beginning of a shelling of \(F_j\), i.e.
    \begin{equation*}
      F_j\cap \bigcup_{i=1}^{j-1} F_i = G_1\cup\dots\cup G_r
    \end{equation*}
    for some shelling \(G_1,\dots,G_r,\dots,G_t\) of \(F_j\).
  \item If \(j<s\), then \(\bigcup_{i=1}^j F_i\) is contractible.
  \end{enumerate}

\(Q\) is called shellable if it has a shelling.
\end{definition}

\begin{example}
  \(\Delta^n\) and \(\Sigma^n\) are shellable and any ordering of their facets is a shelling.
  This follows by induction on the dimension \(n\) because the intersection of any facet \(F_j\) with a facet \(F_i\) with \(i<j\) is a facet of \(F_j\).
\end{example}

\begin{lemma}
  Let \(Q_1\) and \(Q_2\) be two nice manifolds with corners such that all faces of \(Q_1\) and \(Q_2\) are contractible.
  If \(F_1,\dots,F_s\) and \(G_1,\dots, G_r\) are shellings of \(Q_1\) and \(Q_2\), respectively, then
  \begin{equation*}
    F_1\times Q_2,\dots,F_{s-1}\times Q_2,Q_1\times G_1,\dots,Q_1\times G_r, F_s\times Q_2
  \end{equation*}
is a shelling of \(Q_1\times Q_2\).
\end{lemma}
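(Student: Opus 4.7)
The plan is to verify the three defining conditions of a shelling directly for the proposed ordering, by induction on $\dim Q_1+\dim Q_2$ (the statement is trivial in dimension zero, and the induction is nontrivial because conditions (1) and (2) themselves require that products of lower-dimensional shellable pieces be shellable, which is exactly the lemma one dimension down). Throughout I will use that the product of contractible spaces is contractible, and that the pushout of two contractible spaces along a contractible subspace is contractible.

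For condition (1), $F_1\times Q_2$ is shellable by the inductive hypothesis applied to the shellable $F_1$ and $Q_2$. For condition (2), I would split into three cases. \emph{Case (a)}, an intermediate facet $F_k\times Q_2$ with $2\leq k\leq s-1$: the intersection with the earlier facets equals $(F_k\cap\bigcup_{i<k}F_i)\times Q_2=(H_1\cup\dots\cup H_m)\times Q_2$, where $H_1,\dots,H_m$ begins a shelling $H_1,\dots,H_t$ of $F_k$. Applying the inductive hypothesis to this shelling of $F_k$ and any shelling of $Q_2$ gives a shelling of $F_k\times Q_2$ whose first $m$ facets are $H_1\times Q_2,\dots,H_m\times Q_2$, provided $m<t$. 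That $m<t$ follows from a disc-attachment argument: if $F_k\cap\bigcup_{i<k}F_i=\partial F_k$, then $\bigcup_{i\leq k}F_i$ is obtained from the contractible $\bigcup_{i<k}F_i$ by attaching a disc along its whole boundary sphere, hence is homotopy equivalent to $S^{\dim F_k}$, contradicting shelling condition (3) for $Q_1$ when $k<s$. \emph{Case (b)}, a facet $Q_1\times G_l$: its intersection with earlier facets is $(\bigcup_{i<s}F_i)\times G_l\cup Q_1\times(G_l\cap\bigcup_{j<l}G_j)$; taking a shelling $K_1,\dots,K_P$ of $G_l$ that begins with $G_l\cap\bigcup_{j<l}G_j$ and applying the inductive hypothesis to $Q_1\times G_l$ produces the shelling $F_1\times G_l,\dots,F_{s-1}\times G_l,Q_1\times K_1,\dots,Q_1\times K_P,F_s\times G_l$, whose beginning of length $s-1+p$ has exactly the required union. \emph{Case (c)}, the last facet $F_s\times Q_2$: the intersection equals $\partial(F_s\times Q_2)$, which is the union of all facets of $F_s\times Q_2$, i.e.\ the full shelling viewed as its own beginning.

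For condition (3), I would compute the union of the first $j$ facets in three ranges. For $j\leq s-1$ the union is $(\bigcup_{i\leq j}F_i)\times Q_2$, contractible as a product of two contractible factors. For $j=s-1+l$ with $l<r$, writing $A=(\bigcup_{i<s}F_i)\times Q_2$ and $B=Q_1\times(\bigcup_{j\leq l}G_j)$, each of $A$, $B$, and $A\cap B=(\bigcup_{i<s}F_i)\times(\bigcup_{j\leq l}G_j)$ is a product of contractible factors; hence the pushout $A\cup B$ is contractible. For $j=s+r-1$, the potentially problematic step, one has $B=Q_1\times\partial Q_2$ and $A\cap B=(\bigcup_{i<s}F_i)\times\partial Q_2$; since both $\bigcup_{i<s}F_i$ and $Q_1$ are contractible, the inclusion $A\cap B\hookrightarrow B$ is a homotopy equivalence, so $A\cup B$ deformation retracts onto the contractible $A$.

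The main obstacle I expect is the verification in Case (a) that intermediate facets never swallow their entire boundary in the intersection, since without the bound $m<t$ the inductive shelling of $F_k\times Q_2$ does not have the required initial segment. The disc-attachment argument handles this cleanly, but it is the one place where all three axioms of a shelling are used in concert (the shellability of $F_k$, the beginning-of-shelling structure for its intersection with earlier facets, and the contractibility of the partial union). The penultimate-step contractibility in condition (3) is a close second, since it is the only step where neither factor is contractible and one must exploit the homotopy equivalence $A\cap B\to B$ rather than a direct product argument.
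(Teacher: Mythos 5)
Your proof is correct and follows essentially the same route as the paper's: induction on the dimension of the product, the same computation of the intersections of each facet with the union of its predecessors, and the same key point that for an intermediate facet $F_k\times Q_2$ the intersection cannot be all of $\partial F_k\times Q_2$ because $\bigcup_{i\leq k}F_i$ is contractible. You additionally write out the contractibility verification in condition (3), which the paper explicitly leaves to the reader, and your disc-attachment (Mayer--Vietoris) justification of $F_k\cap\bigcup_{i<k}F_i\neq\partial F_k$ is exactly the argument implicit in the paper's one-line assertion.
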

\begin{proof}
  We prove this lemma by induction on the dimension of \(Q_1\times Q_2\).
  For \(\dim Q_1\times Q_2=0\) there is nothing to show. Therefore assume that \(\dim Q_1\times Q_2>0\) and the lemma is proved for all products \(\tilde{Q}_1\times\tilde{Q}_2\) of dimension less than \(\dim Q_1\times Q_2\).
  \begin{enumerate}
  \item It follows from the induction hypothesis that \(F_1\times Q_2\) has a shelling.
  \item For \(j\leq s-1\), we have
    \begin{equation}
      \label{eq:shell1}
      (F_j\times Q_2)\cap \bigcup_{i=1}^{j-1}(F_i\times Q_2)= (F_j\cap \bigcup_{i=1}^{j-1}F_i)\times Q_2.
    \end{equation}
    Because \(\bigcup_{i=1}^jF_i\) is contractible, \(F_j\cap \bigcup_{i=1}^{j-1}F_i\neq \partial F_j\) is the beginning of a shelling of \(F_j\).
    By the induction hypothesis it follows that (\ref{eq:shell1}) is the beginning of a shelling of \(F_j\times Q_2\).

    For \(1\leq j\leq r\) we have
 \begin{equation}
      \label{eq:shell2}
      (Q_1\times G_j)\cap \left(\bigcup_{i=1}^{s-1}(F_i\times Q_2)\cup \bigcup_{i=1}^{j-1}(Q_1\times G_i)\right) = \bigcup_{i=1}^{s-1}F_i\times G_j\cup \left(Q_1\times (\bigcup_{i=1}^{j-1}G_j\cap G_i)\right).
    \end{equation}
    Therefore it follows from the induction hypothesis that (\ref{eq:shell2}) is the beginning of a shelling of \(Q_1\times G_j\).

    The intersection of \(F_s\times Q_2\) with
 \[\bigcup_{i=1}^{s-1}(F_i\times Q_2)\cup \bigcup_{i=1}^r(Q_1\times G_i)\]
 is the whole boundary of \(F_s\times Q_2\).
    Since by assumption there are shellings for \(F_s\) and \(Q_2\), it follows from the induction hypothesis that it is a beginning of a shelling for \(F_s\times Q_2\).

  \item The verification that \(\bigcup_{i=1}^j F_i\times Q_2 \cup \bigcup_{i=1}^{j'} Q_1\times G_i\) is contractible for \(j\leq s-1\) and \(j'\leq r\) is left to the reader.
  \end{enumerate}
\end{proof}

It follows from the above lemma that \(\prod_i \Sigma^{n_i}\times \prod_i \Delta^{n_i}\) is shellable.
Now we can prove the following lemma in the same way as Theorem 5.6 in \cite{MR3030690}.

\begin{lemma}
\label{sec:proof-theor-refs-3}
Let \(M\) be a locally standard torus manifold over a shellable nice manifold with corners \(Q\).
Then \(M\) is determined up to equivariant diffeomorphism by the characteristic function \(\lambda_M\).
\end{lemma}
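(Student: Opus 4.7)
The plan is to adapt the shelling argument of Theorem 5.6 of \cite{MR3030690} to this more general setting of nice manifolds with corners. I would proceed by double induction: an outer induction on \(\dim Q\) and an inner induction on the length \(s\) of the shelling \(F_1,\dots,F_s\) of \(Q\). Let \(M\) and \(M'\) be two locally standard torus manifolds over \(Q\) with the same characteristic function \(\lambda\), with orbit maps \(\pi\) and \(\pi'\). The goal is to build an equivariant diffeomorphism \(M\to M'\) by extending partial equivariant diffeomorphisms along the shelling of \(\partial Q = \bigcup_{i=1}^s F_i\).

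First, since \(F_1\) is shellable, the outer induction hypothesis, applied to the characteristic submanifold \(\pi^{-1}(F_1)\) viewed as a locally standard \(T/\lambda(F_1)\)-manifold over \(F_1\), provides an equivariant diffeomorphism between \(\pi^{-1}(F_1)\) and \(\pi'^{-1}(F_1)\). Since the \(\lambda(F_1)\)-equivariant normal disc bundle of \(\pi^{-1}(F_1)\) is determined by \(\lambda(F_1)\) and is trivial on the contractible base \(F_1\), this diffeomorphism extends to an equivariant diffeomorphism \(f_1\) between invariant tubular neighborhoods of \(\pi^{-1}(F_1)\) in \(M\) and in \(M'\). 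For the inductive step, suppose we have already produced an equivariant diffeomorphism \(f_j\) from an invariant open neighborhood of \(\pi^{-1}\bigl(\bigcup_{i\leq j} F_i\bigr)\) in \(M\) to the corresponding neighborhood in \(M'\). By the shelling condition, \(F_{j+1}\cap\bigcup_{i\leq j} F_i\) is the beginning of a shelling of \(F_{j+1}\); the inner induction hypothesis applied to \(F_{j+1}\), together with the outer induction on dimension, furnishes an equivariant diffeomorphism \(g\) between invariant tubular neighborhoods of \(\pi^{-1}(F_{j+1})\) in \(M\) and \(M'\). On the overlap, \(f_j\) and \(g\) differ by an equivariant self-diffeomorphism \(h\) of a neighborhood of \(\pi^{-1}\bigl(F_{j+1}\cap\bigcup_{i\leq j}F_i\bigr)\); one extends \(h\) to a self-diffeomorphism of a neighborhood of \(\pi^{-1}(F_{j+1})\) via equivariant isotopy extension (using again that the overlap starts a shelling of \(F_{j+1}\)) and composes \(g\) with it to obtain \(f_{j+1}\). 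The contractibility of \(\bigcup_{i\leq j} F_i\) for \(j<s\) ensures that the principal \(T\)-bundle structures used in the gluing are trivial, so no obstruction to extension arises at intermediate steps.

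After \(s\) steps we have an equivariant diffeomorphism \(f_s\) defined over an invariant neighborhood of \(\pi^{-1}(\partial Q)\). Since \(Q\) is contractible, \(\pi^{-1}(\interior Q)\) is a trivial principal \(T\)-bundle, and extending \(f_s\) over this free part reduces to extending a gauge transformation of a trivial bundle over a contractible manifold with boundary, which is unobstructed. The main obstacle in this proof is the inductive extension step: one must guarantee that the equivariant self-diffeomorphism \(h\) of the overlap piece extends to a neighborhood of the next characteristic submanifold. This hinges on an equivariant isotopy extension theorem for locally standard actions on nice manifolds with corners, applied in a form compatible with the shelling hypothesis; the definition of shelling is precisely engineered so that at each stage the overlap is combinatorially (and smoothly, by the outer induction) a piece of a known shellable face, ensuring the requisite extension exists.
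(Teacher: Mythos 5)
Your overall strategy --- an induction along the shelling, gluing equivariant tubular neighbourhoods of the characteristic submanifolds \(\pi^{-1}(F_i)\) and controlling the gluing maps by equivariant isotopy extension --- is exactly the approach the paper intends: its entire proof of this lemma is the remark that the argument of Theorem~5.6 of \cite{MR3030690} carries over, and that argument is the shelling induction you describe. So the architecture is right, and deferring the isotopy-extension step to the cited theorem is legitimate (that step is the real technical content, and the paper does not reprove it either).

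There is, however, one concretely wrong step in your sketch. You justify the extension of the equivariant diffeomorphism \(\pi^{-1}(F_1)\rightarrow\pi'^{-1}(F_1)\) to tubular neighbourhoods by saying the normal disc bundle ``is trivial on the contractible base \(F_1\)''. The normal bundle is a complex line bundle over the characteristic submanifold \(\pi^{-1}(F_1)\), not over \(F_1\), and \(\pi^{-1}(F_1)\) is in general far from contractible; the bundle need not be trivial (already for a Hirzebruch surface over the square, the normal bundle of a characteristic sphere is \(\mathcal{O}(k)\) with \(k\neq 0\)). What the argument actually needs, and what is true, is that the \(T\)-equivariant isomorphism type of this normal bundle is determined by the characteristic function: since all faces of \(Q\) are contractible, \(\pi^{-1}(F_1)\) has vanishing odd cohomology, hence is equivariantly formal, so an equivariant line bundle over it is determined by the restrictions of its equivariant first Chern class to the \(T\)-fixed points, and these weights are computed from \(\lambda\) at the vertices of \(F_1\). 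With that correction the first step (and the analogous identifications of normal data at the later stages of the induction) goes through.
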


Now we can prove Theorem~\ref{sec:proof-theor-refs-1}.

\begin{proof}[{Proof of Theorem~\ref{sec:proof-theor-refs-1}}]
  As in the proof of Theorem~\ref{sec:torus-manifolds-with}, it follows that \((\mathcal{P}(M/T),\lambda)\) is isomorphic to \((\mathcal{P}(M'/T),\lambda')\), where \(M'\) is the quotient of a free linear torus action on a product of spheres.
  This quotient admits an invariant metric of non-negative curvature and is simply connected.
  Therefore, by Lemmas \ref{sec:proof-theor-refs-2}, \ref{sec:non-negat-curv} and \ref{sec:proof-theor-refs-3}, \(M\) and \(M'\) are equivariantly diffeomorphic.
\end{proof}

\section{Non-simply connected non-negatively curved torus manifolds}
\label{sec:non-simply-con}

Now we discuss some results for non-simply connected non-negatively curved torus manifolds.

\begin{lemma}
  \label{sec:non-negat-curv-1}
  Let \(M\) be a \(2n\)-dimensional torus manifold with an invariant metric of non-negative sectional curvature.
  Then we have \(|\pi_1(M)|=2^k\) for some \(0\leq k \leq n-1\).
\end{lemma}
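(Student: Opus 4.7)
The plan is to reduce to the simply connected case via the universal cover. First, I show that \(\pi_1(M)\) is finite: since the \(T\)-action has fixed points, \(\chi(M)=\chi(M^T)>0\); if \(\pi_1(M)\) were infinite, the Cheeger--Gromoll splitting theorem, together with a Bieberbach argument, would produce a finite cover of \(M\) of the form \(T^{\ell}\times\hat M\) with \(\ell\geq 1\), whose Euler characteristic vanishes, a contradiction. Next, I lift the \(T\)-action to \(\tilde M\): choosing \(\tilde x\in\tilde M\) above some \(x\in M^T\) and using that the orbit map \(T\to M\) at \(x\) is constant, the \(T\)-action on \(M\) lifts to a \(T\)-action on \(\tilde M\) fixing \(\tilde x\). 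For \(\gamma\in\Gamma:=\pi_1(M)\) and \(t\in T\), the maps \(t\) and \(\gamma^{-1}t\gamma\) are both lifts of the \(t\)-action on \(M\) and differ by a deck transformation \(\delta(\gamma,t)\in\Gamma\); continuity of \(\delta(\gamma,\cdot)\colon T\to\Gamma\) into a discrete group forces it to be constant, and \(\delta(\gamma,e)=e\), so \(\Gamma\) and \(T\) commute. In particular \(\tilde M\) is a simply connected non-negatively curved torus manifold on which \(\Gamma\) acts freely by \(T\)-equivariant diffeomorphisms.

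Applying Theorem~\ref{sec:proof-theor-refs-1}, I identify \(\tilde M\cong N/T'\) equivariantly, so \(Q:=\tilde M/T\cong \prod_{i<r}\Sigma^{n_i}\times\prod_{i\geq r}\Delta^{n_i}\). Because \(\Gamma\) commutes with \(T\), the \(T\)-isotropy groups of \(\tilde y\) and \(\gamma\tilde y\) coincide; applied to interior points of characteristic submanifolds, this says that \(\Gamma\) can only send a facet \(F\) of \(Q\) to another facet \(F'\) with \(\lambda(F)=\lambda(F')\) in \(\bar T:=T/T'\). The characteristic circles of facets belonging to different factors of \(Q\) lie in independent subtori of \(\bar T\), and within a single \(\Delta^{n_i}\)-factor the \(n_i+1\) circles are pairwise distinct; hence \(\Gamma\) preserves each such facet setwise. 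A vertex of \(Q\) belonging to a \(\Delta^{n_i}\)-factor is uniquely determined by the facets containing it and is therefore fixed by \(\Gamma\). In a \(\Sigma^{n_i}\)-factor, by contrast, both vertices lie in every one of its facets, so \(\Gamma\) is permitted to swap them. The induced action on the vertex set \(V(Q)=\tilde M^T\) thus factors through \(\mathbb{Z}_2^r\), and freeness of the \(\Gamma\)-action on \(\tilde M^T\) makes the induced map \(\Gamma\to\mathbb{Z}_2^r\) injective. In particular \(\Gamma\) is an elementary abelian \(2\)-group of rank at most \(r\).

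Finally, since \(M=\tilde M/\Gamma\) is orientable, \(\Gamma\) must act on \(\tilde M\) by orientation-preserving diffeomorphisms. A swap of the two vertices of a \(\Sigma^{n_i}\)-factor corresponds, under the identification \(\tilde M\cong N/T'\), to a \(T\)-equivariant self-diffeomorphism of \(\tilde M\) whose restriction to the \(S^{2n_i}\)-summand \(\subset \mathbb{C}^{n_i}\oplus\mathbb{R}\) of \(N\) exchanges the two poles; any such \(T\)-equivariant map must reverse the \(\mathbb{R}\)-axis and is therefore orientation-reversing. Hence an element of \(\mathbb{Z}_2^r\) swapping \(k\) of the \(\Sigma\)-factors acts on \(\tilde M\) with degree \((-1)^k\), and orientation-preservation restricts \(\Gamma\) to the parity kernel \(\mathbb{Z}_2^{r-1}\subset\mathbb{Z}_2^r\). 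Together with \(r\leq\sum_{i<r}n_i\leq n\), this gives \(|\pi_1(M)|=2^k\) with \(0\leq k\leq n-1\). The main obstacle is the third paragraph---verifying the pairwise distinctness of the characteristic circles in \(\bar T\) and precisely identifying a \(\Sigma^{n_i}\)-vertex swap with an orientation-reversing self-map of the corresponding sphere factor---both of which require a careful bookkeeping of how \(T'\) embeds in the maximal torus of \(\prod SO(2n_i+1)\times\prod SO(2n_i+2)\).
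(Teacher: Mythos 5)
Your route is genuinely different from the paper's. The paper proves this lemma by induction on \(\dim M\), using Spindeler's decomposition \(M\cong D(F)\cup_E D(N)\) from Theorem~\ref{sec:proof-theor-refs} and Seifert--van Kampen in three cases according to \(\codim N\); the universal-cover-plus-poset analysis you propose is carried out only afterwards (Corollary~\ref{sec:non-simply-connected} and the theorem following it), and there it is used to identify the group structure \(\pi_1(M)\cong\mathbb{Z}_2^k\) while the bound \(k\leq n-1\) is \emph{imported} from the present lemma rather than re-derived. Your first two paragraphs are fine and involve no circularity: finiteness of \(\pi_1(M)\) via Cheeger--Gromoll and \(\chi(M)=|M^T|>0\), the lifting of the \(T\)-action, the commutation with deck transformations, and the application of Theorem~\ref{sec:proof-theor-refs-1} to \(\tilde M\) are all legitimate.

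The gaps are in the last two paragraphs. First, the claim that the \(n_i+1\) characteristic circles of a \(\Delta^{n_i}\)-factor are pairwise distinct in \(\bar T=T/T'\) fails for \(n_i=1\): the two facets of a \(\Delta^1\)-factor do not intersect, so local standardness imposes no constraint on their circles, and they can coincide. For example, for \(S^3/S^1_{\mathrm{diag}}=\C P^1\) both coordinate circles of the torus acting on \(S^3\) have the same image in \(T/T'\), and the antipodal map of \(\C P^1\cong S^2\) commutes with the residual rotation and swaps the two facets. Hence your embedding of \(\Gamma\) lands in \(\mathbb{Z}_2^{r+l}\) (\(l\) the number of one-dimensional \(\Delta\)-factors), not \(\mathbb{Z}_2^{r}\); this is precisely the group \(H\cong\mathbb{Z}_2^{l+r-1}\) computed in the paper's final theorem. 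The arithmetic can be repaired, since each factor contributes at least \(1\) to \(n\), but only if the orientation argument also covers \(\Delta^1\)-swaps. Second, the degree computation you yourself flag as ``the main obstacle'' is a real gap: a deck transformation acts on \(N/T'\), which is in general not a product, and it need not be covered by a product of self-maps of the sphere factors of \(N\), so the assertion that a transformation swapping \(k\) pairs acts with degree \((-1)^k\) requires an actual proof (e.g.\ via the Masuda--Panov description of \(H^*(\tilde M)\) or by lifting to \(N\)). Since the bound \(k\leq n-1\) --- the entire content of the lemma beyond \(2\)-primality --- rests on this unproved step, the argument is incomplete as written; the paper's inductive disc-bundle proof avoids all of this orientation bookkeeping.
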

\begin{proof}
  We prove this lemma by induction on the dimension \(2n\) of \(M\).
  For \(n=1\) the only \(2n\)-dimensional torus manifold is \(S^2\).
  Therefore the lemma is true in this case.

  Now assume that the lemma is true for all torus manifolds of dimension less than \(2n\).
  Let \(M\) be a torus manifold of dimension \(2n\) with a metric of non-negative curvature and \(F\) a characteristic submanifold of \(M\).
Then by Theorem~\ref{sec:proof-theor-refs} we have a decomposition of \(M\) as a union of two disk bundles:
\begin{equation*}
  M=D(N)\cup_ED(F).
\end{equation*}

Moreover, \(F\) is a torus manifold of dimension \(2(n-1)\) which admits an invariant metric of non-negative curvature.
Therefore, by the induction hypothesis, we have \(|\pi_1(F)|=2^k\) with \(0\leq k\leq n-2\).

At first assume that \(\codim N\geq 3\).
Then it follows from the exact homotopy sequence for the fibration \(E \rightarrow N\) that \(\pi_1(E)\rightarrow \pi_1(N)\) is an isomorphism.
Hence, it follows from Seifert--van Kampen's theorem that \(\pi_1(M)=\pi_1(F)\).
So the claim follows in this case.

Next assume that \(\codim N=2\).
Let \(x\in F\) be a \(T\)-fixed point.
Then with an argument similar to that in the proof of Lemma \ref{sec:proof-theor-refs-2}, one sees that \(\pi_N(\pi_F^{-1}(x))=\{y\}\) is a single point.
Here \(\pi_N:E\rightarrow N\) and \(\pi_F:E\rightarrow F\) denote the bundle projections.

Therefore \(y\in N\) is a \(T\)-fixed point.
Hence, \(N\) is a characteristic submanifold of \(M\).
Denote by \(\lambda(N)\subset T\) the circle subgroup of \(T\) which fixes \(N\).
Then we have an exact sequence
\begin{equation*}
  \pi_1(\lambda(N))\rightarrow \pi_1(E)\rightarrow \pi_1(N)\rightarrow 1.
\end{equation*}
Here the first map is induced by the inclusion of an \(\lambda(N)\)-orbit.
Now it follows from Seifert--van Kampen's theorem, that \(\pi_1(M)=\pi_1(F)/\langle \pi_1(\lambda(N))\rangle\).
Here \(\langle \pi_1(\lambda(N))\rangle\) denotes the normal subgroup of \(\pi_1(F)\) which is generated by the image of the map \(\pi_1(\lambda(N))\rightarrow \pi_1(F)\) induced by the inclusion of a \(\lambda(N)\)-orbit.
Since there are \(T\)-fixed points in \(F\), the \(\lambda(N)\)-orbits in \(F\) are null-homotopic.
Therefore it follows that \(\pi_1(M)=\pi_1(F)\).
Hence the claim follows in this case.

Now assume that \(\codim N=1\).
Then the map \(E\rightarrow N\) is a two-fold covering.
Therefore we have an exact sequence
\begin{equation}
\label{eq:3}
  1\rightarrow \pi_1(E)\rightarrow \pi_1(N)\rightarrow \mathbb{Z}_2\rightarrow 1.
\end{equation}
In particular, \(\pi_1(E)\) is a normal subgroup of \(\pi_1(N)\).

Since \(\codim F=2\), we get the following exact sequence from the exact homotopy sequence for the fibration \(E\rightarrow F\)
\begin{equation*}
  \pi_1(\lambda(F))\rightarrow \pi_1(E)\rightarrow \pi_1(F)\rightarrow 1.
\end{equation*}
Therefore it follows from Seifert--van Kampen's theorem that 
\begin{equation*}
  \pi_1(M)=\pi_1(N)/\langle\pi_1(\lambda(F))\rangle.
\end{equation*}
Here \(\langle \pi_1(\lambda(F))\rangle\) denotes the normal subgroup of \(\pi_1(N)\) which is generated by the image of the inclusion \(\pi_1(\lambda(F))\rightarrow \pi_1(E)\rightarrow \pi_1(N)\).

Since \(\pi_1(E)\subset \pi_1(N)\) is normal, we have \(\langle \pi_1(\lambda(F))\rangle \subset \pi_1(E)\).
Therefore from (\ref{eq:3}) we get the following exact sequence
\begin{equation*}
  1\rightarrow \pi_1(E)/\langle \pi_1(\lambda(F))\rangle\rightarrow \pi_1(M)\rightarrow \mathbb{Z}_2\rightarrow 1.
\end{equation*}
Since there is a surjection \(\pi_1(F)=\pi_1(E)/\pi_1(\lambda(F))\rightarrow \pi_1(E)/\langle \pi_1(\lambda(F))\rangle\), the claim now follows.
\end{proof}

As a corollary to Lemma~\ref{sec:non-negat-curv-1} we get:

\begin{cor}
\label{sec:non-simply-connected}
  Let \(M\) be a \(2n\)-dimensional torus manifold which admits an invariant metric of non-negative sectional curvature.
  Then the universal covering \(\tilde{M}\) of \(M\) is a simply connected torus manifold which admits an invariant metric of non-negative curvature.
  Moreover, the action of the torus on \(\tilde{M}\) commutes with the action of the deck transformation group.
\end{cor}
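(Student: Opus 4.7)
The plan is to use Lemma~\ref{sec:non-negat-curv-1} as the main input: since $|\pi_1(M)|$ is finite (a power of $2$), the universal covering $\pi\colon \tilde M\to M$ is a finite covering, so $\tilde M$ is itself a closed $2n$-manifold. Pulling back the non-negatively curved, $T$-invariant metric on $M$ gives a non-negatively curved metric $\tilde g = \pi^{*}g$ on $\tilde M$ (pulling back curvature under a local isometry preserves the sectional curvature bound), and completeness is automatic by compactness.

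The next step is to lift the $T$-action to $\tilde M$. Because $M$ is a torus manifold, there exists a $T$-fixed point $p\in M$; choosing $p$ as base point, the orbit map $T\to M$, $t\mapsto t\cdot p$, is constant, so the induced map $\pi_1(T)\to\pi_1(M)$ is trivial. By standard covering-space theory this forces the action map $T\times M\to M$ to lift to an action $T\times\tilde M\to\tilde M$ covering the original one. To see $\tilde M$ is then a torus manifold in its own right I would check three things: (i) effectiveness is inherited because any element of $T$ acting trivially on $\tilde M$ acts trivially on $M$; (ii) picking any $\tilde p\in\pi^{-1}(p)$, the $T$-orbit $T\cdot\tilde p$ is connected, lies in the discrete fiber $\pi^{-1}(p)$, hence equals $\{\tilde p\}$, so $\tilde M^{T}\neq\emptyset$; and (iii) $\dim\tilde M=\dim M=2n$. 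Invariance of $\tilde g$ under $T$ then follows from $t^{*}\tilde g = t^{*}\pi^{*}g = \pi^{*}(t^{*}g) = \pi^{*}g = \tilde g$ for each $t\in T$.

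Finally, to see that $T$ and the deck group $\Gamma = \pi_1(M)$ commute in their actions on $\tilde M$, I would argue by uniqueness of lifts. Fix $\gamma\in\Gamma$ and consider the two continuous maps
\[
  \Phi_1,\Phi_2\colon T\times\tilde M\longrightarrow\tilde M,\qquad
  \Phi_1(t,x)=t\cdot x,\qquad \Phi_2(t,x)=\gamma^{-1}\cdot t\cdot\gamma\cdot x.
\]
Both are lifts of the same map $T\times M\to M$ through $\pi$, and both agree at $t=e$ (where each is the identity). Since $T\times\tilde M$ is connected, uniqueness of lifts of continuous maps into a covering space gives $\Phi_1=\Phi_2$; that is, $\gamma t=t\gamma$ in $\mathrm{Homeo}(\tilde M)$.

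The only delicate point in this plan is the lifting step: one must actually get a $T$-action and not merely an $\mathbb{R}^{n}$-action coming from the universal cover of $T$. The existence of a $T$-fixed point in $M$ is exactly what kills this potential obstruction (it trivializes the orbit-map contribution to $\pi_{1}(M)$), and all other verifications are essentially formal consequences of covering-space theory and the fact that $\pi$ is a local isometry.
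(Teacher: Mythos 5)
Your proposal is correct and follows essentially the same route as the paper: finiteness of $\pi_1(M)$ from Lemma~\ref{sec:non-negat-curv-1}, lifting the $T$-action via the existence of a $T$-fixed point (which makes the orbit map null-homotopic), and deducing commutativity with the deck group from connectedness of $T$ (your uniqueness-of-lifts argument is just a concrete implementation of the paper's ``connected group normalizing a discrete group centralizes it''). The additional verifications (effectiveness, a fixed point in $\tilde M$, $T$-invariance of the pulled-back metric) are all correct.
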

\begin{proof}
  By Lemma~\ref{sec:non-negat-curv-1}, \(\tilde{M}\) is a closed manifold.
  Since there are \(T\)-fixed points in \(M\), the principal orbits of the \(T\)-action on \(M\) are null-homotopic in \(M\).
  Hence it follows that the \(T\)-action lifts to an action on \(\tilde{M}\).

  This action on \(\tilde{M}\) has a fixed point and normalizes the deck transformation group \(G\).
  Since \(T\) is connected and \(G\) discrete it follows that the \(T\)- and \(G\)-actions on \(\tilde{M}\) commute.

  The metric on \(M\) lifts to an metric on \(\tilde{M}\) which clearly has non-negative sectional curvature and is invariant under the lifted torus action.
Hence the claim follows.
\end{proof}

Now we can determine the isomorphism type of the fundamental group of a non-simply connected non-negatively curved torus manifold.

\begin{theorem}
  Let \(M\) be a non-negatively curved torus manifold of dimension \(2n\). 
  Then there is a \(0\leq k\leq n-1\), such that \(\pi_1(M)=\mathbb{Z}_2^k\).
\end{theorem}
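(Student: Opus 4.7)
By Lemma~\ref{sec:non-negat-curv-1} we already have $|\pi_1(M)| = 2^k$ for some $0 \le k \le n-1$, so it remains to show that $G := \pi_1(M)$ has exponent dividing $2$. By Corollary~\ref{sec:non-simply-connected}, $G$ acts freely on the universal cover $\tilde M$, which is a simply connected non-negatively curved torus manifold, commuting with the lifted torus action. Applying Theorem~\ref{sec:proof-theor-refs-1}, we identify $\tilde M$ equivariantly with a quotient $N/T'$, where $N = \prod_{i<r} S^{2n_i} \times \prod_{i\geq r} S^{2n_i+1}$ carries the linear action of the maximal torus $\hat T$ of $\prod_{i<r} SO(2n_i+1) \times \prod_{i\geq r} SO(2n_i+2)$, and $T' \subset \hat T$ is a subtorus acting freely on $N$ with quotient torus $T = \hat T/T'$.

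The plan is to lift the $G$-action on $\tilde M$ through the principal $T'$-bundle $N \to \tilde M$ to obtain a compact Lie group $\hat G$ acting on $N$, commuting with $\hat T$ and fitting in a short exact sequence $1 \to T' \to \hat G \to G \to 1$. Each $g\in G$ commutes with the $T$-action on $\tilde M$, hence preserves the $T'$-orbits in the bundle $N\to\tilde M$ and lifts, uniquely up to composition with an element of $T'$, to a $T'$-equivariant diffeomorphism of $N$ commuting with $\hat T$; the collection of all such lifts is the desired $\hat G$. After averaging the standard round product metric on $N$ over $\hat G$, we may assume $\hat G$ acts by isometries, so $\hat G$ lies in the centralizer of $\hat T$ in $\prod_{i<r} O(2n_i+1) \times \prod_{i\geq r} O(2n_i+2)$.

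A factor-by-factor calculation shows that this centralizer equals $\hat T \cdot \mathbb{Z}_2^r$: since any reflection in $O(2)$ anti-commutes with rotations in $SO(2)$, the centralizer of a maximal torus in $O(2m+2)$ is just the torus itself, while in $O(2m+1)$ it is extended by the single involution $\mathrm{diag}(1,\ldots,1,-1)$, contributing one $\mathbb{Z}_2$ factor per even-dimensional sphere. Hence $\hat G$ is contained in the abelian group $\hat T \cdot \mathbb{Z}_2^r$, and $G = \hat G/T'$ embeds into the abelian Lie group $(\hat T \times \mathbb{Z}_2^r)/T' \cong T \times \mathbb{Z}_2^r$. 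To conclude, let $g \in G$ have image $(t_0, \epsilon) \in T \times \mathbb{Z}_2^r$; then $g^2$ has image $(t_0^2, e) \in T \times \{e\}$, so $g^2$ acts on $\tilde M$ via the torus element $t_0^2 \in T$. But every nontrivial element of $T$ fixes the nonempty set $\tilde M^T$, contradicting freeness of $G$ unless $g^2 = e$. Thus $G$ has exponent dividing $2$, so $G \cong \mathbb{Z}_2^k$ with $k \leq n-1$ by the bound from Lemma~\ref{sec:non-negat-curv-1}. The main obstacle is constructing the lift $\hat G$ of $G$ to $N$, which relies on the commutativity of $G$ with the $T$-action together with the explicit description of $N$ as a product of unit spheres in complex vector spaces.
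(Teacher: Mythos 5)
Your endgame (exponent $2$ plus the order bound $|\pi_1(M)|=2^k$, $k\le n-1$, from Lemma~\ref{sec:non-negat-curv-1}) is fine, and your centralizer computation in $\prod O(2n_i+1)\times\prod O(2n_i+2)$ is correct. The approach is genuinely different from the paper's, which never leaves the orbit space: there the deck group $G$ is shown to act effectively on the face poset $\mathcal{P}(\tilde M/T)$ preserving the characteristic function, and the relevant automorphism group is computed directly to be $\mathbb{Z}_2^{l+r-1}$. But your route has a real gap exactly where you flag "the main obstacle": the construction of a \emph{compact} lift $\hat G$ with $1\to T'\to\hat G\to G\to 1$ inside the isometry group of the round product metric.

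Concretely, three things are missing. First, a diffeomorphism $g$ of $\tilde M$ lifts to a $T'$-bundle automorphism of the principal bundle $N\to\tilde M$ only if $g^*N\cong N$ as principal $T'$-bundles; commuting with $T=\hat T/T'$ does not by itself give this (it is plausible via $\lambda(gF)=\lambda(F)$ and the combinatorial model, but that is an argument you have not made). Second, when a lift exists it is unique only up to a gauge transformation, i.e.\ up to a map $\tilde M\to T'$ (or $\tilde M/T\to T'$ if you insist on $\hat T$-equivariance), not up to an element of $T'$; so the group of \emph{all} such lifts is an extension of $G$ by an infinite-dimensional gauge group, it is not compact, and the averaging step that is supposed to put $\hat G$ inside $\Iso(N)$ is not available. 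Third, even a single chosen lift has no reason to be an isometry of the round metric: Theorem~\ref{sec:proof-theor-refs-1} gives an equivariant diffeomorphism $\tilde M\cong N/T'$, not an isometry, so the original non-negatively curved metric on $\tilde M$ does not correspond to the quotient of the round metric, and $g$ being an isometry of $\tilde M$ buys you nothing upstairs. To repair this you would essentially have to pass to the combinatorial model and make coherent choices of lifts there --- at which point you are reproducing the paper's poset argument.
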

\begin{proof}
  By Corollary~\ref{sec:non-simply-connected}, the universal covering \(\tilde{M}\)  of \(M\) is a torus manifold.
  Moreover, the action of \(G=\pi_1(M)\) on \(\tilde{M}\) commutes with the action of the torus \(T\) on \(\tilde{M}\).

  Therefore it induces a \(G\)-action on \(\mathcal{P}(\tilde{M}/T)\).
  Moreover, for any \(g\in G\) and all faces \(F\) of \(\tilde{M}/T\), we have
  \begin{equation*}
    \lambda(gF)=\lambda(F).
  \end{equation*}
  Hence, the intersection of \(gF\) and \(F\) is empty if \(gF\neq F\).

  Since \(\mathcal{P}(\tilde{M}/T)\cong \mathcal{P}(\prod_{i< r}
  \Sigma^{n_i}\times \prod_{i\geq r}\Delta^{n_i})\) and all facets of \(\Sigma^{n_i}\) and \(\Delta^{n_i}\) have non-trivial intersection if \(n_i\geq 2\), it follows that \(gF=F\) for all facets of \(\tilde{M}/T\) belonging to a factor of dimension at least two.
Moreover, the facets which belong to the other factors are mapped to facets that belong to the same factor.

Since \(G\) acts freely on \(\tilde{M}^T\), the \(G\)-action on \(\mathcal{P}(\tilde{M}/T)\) is effective.
Therefore \(G\) might be identified with a subgroup of \(\Aut(\mathcal{P}(\tilde{M}/T))\).
This subgroup is contained in the subgroup \(H\) of \(\Aut(\mathcal{P}(\tilde{M}/T))\) which contains all automorphisms which leave all facets belonging to factors of dimension greater or equal to two invariant and maps facets belonging to a factor of dimension one to facets belonging to the same factor.

We will show that \(H\) is isomorphic to \(\mathbb{Z}_2^{l+r-1}\)
where \(l\) is the number of \(i\geq r\) with \(n_i=1\).
Using Lemma~\ref{sec:non-negat-curv-1} one sees that this implies the theorem.

We define a homomorphism \(\psi:H\rightarrow\mathbb{Z}_2^{l+r-1}\) as follows.

At first assume that the factor \(\Sigma^{n_i}\) has dimension at least two.
Then we set \(\psi(h)_i=0\) if and only if \(h\) leaves all components of \(\bigcap_j F_j\) invariant, where the intersection is taken over all facets \(F_j\) of \(\tilde{M}/T\) belonging to the factor \(\Sigma^{n_i}\).
Note that this intersection has two components.

Now assume that the factor \(\Gamma_i\) has dimension one.
Then we set \(\psi(h)_i=0\) if and only if \(h\) leaves the two facets belonging to \(\Gamma_i\) invariant.

The homomorphism \(\psi\) has an obvious inverse \(\phi:\mathbb{Z}_2^{l+r-1}\rightarrow H\). It is defined as follows.
For \(a\in \mathbb{Z}_2^{l+r-1}\), \(\phi(a)\) leaves all facets of \(\tilde{M}/T\) which do not belong to a factor of dimension two invariant.

Now assume that the factor \(\Sigma^{n_i}\) has dimension at least two.
Then \(\phi(a)\) interchanges the two components of \(\bigcap_j F_j\), where the intersection is taken over all facets belonging to \(\Sigma^{n_i}\), if and only if \(a_i\neq 0\).
Otherwise it leaves these components invariant.

Now assume that the factor \(\Gamma_i\) has dimension one.
Then \(\phi(a)\) interchanges the two facets belonging to \(\Gamma_i\) if and only if \(a_i\neq 0\).
Otherwise it leaves these facets invariant.
It is easy to check that \(\phi(a)\) defined as above extends to an
automorphism of \(\mathcal{P}(\tilde{M}/T)\). 

So we see that \(\psi\) is an isomorphism and the theorem is proved.
\end{proof}

We give an example to show that the bound on the order of the fundamental group given in the above theorem is sharp.

\begin{example}
  Let \(\tilde{M}=\prod_{i=1}^n S^2\) with the torus action induced by rotating each factor.
  The product metric of the standard metrics on each factor is invariant under the action of the torus and has non-negative curvature. 
  On each factor there is an isometric involution \(\iota_1\) given by the antipodal map.

  We define an action of \(\mathbb{Z}^{n-1}_2\) on \(\tilde{M}\) as follows.
  Let \(e_1,\dots,e_{n-1}\) a generating set of \(\mathbb{Z}^{n-1}_2\).
  Then each \(e_i\) acts on the \(i\)-th factor and \((i+1)\)-st factor of \(\tilde{M}\) by \(\iota_1\) and trivially on the other factors.
  This defines a free orientation preserving action of \(\mathbb{Z}^{n-1}_2\) which commutes with the torus action.

  Therefore \(\tilde{M}/\mathbb{Z}^{n-1}_2\) is a torus manifold with an invariant metric of non-negative sectional curvature and fundamental group \(\mathbb{Z}^{n-1}_2\).
\end{example}

\bibliography{non-negative-cur}{}
\bibliographystyle{amsplain}
\end{document}